\setlist[1]{itemsep=5pt}
\newcommand{\comment}[1]{}
      \def\@setcopyright{}
      \def\serieslogo@{}
\newcommand{\field}[1]{\mathbb{#1}}
\newcommand{\C}{\field{C}}
\newcommand{\R}{\field{R}}
\newcommand{\N}{\field{N}}
\newcommand{\Complex}{\mathbb C}
\newcommand{\Real}{\mathbb R}
\newcommand{\ddbar}{\overline\partial}
\newcommand{\Td}{\widetilde}
\newcommand{\abs}[1]{\left\vert#1\right\vert}
\newcommand{\set}[1]{\left\{#1\right\}}
\newcommand{\To}{\rightarrow}
\newtheorem{theorem}{Theorem}[section]
\newtheorem{lemma}[theorem]{Lemma}
\newtheorem{proposition}[theorem]{Proposition}
\newtheorem{definition}[theorem]{Definition}
\numberwithin{equation}{section}
\begin{document}

\title[]{Szeg\H{o} kernel expansion and equivariant  embedding of CR manifolds with circle action}
\author[]{Hendrik Herrmann}
\address{Mathematical Institute, University of Cologne, Weyertal 86-90, 50931 Cologne, Germany}
\thanks{The first-named author was supported by the Graduiertenkolleg 1269: ``Global Structures in Geometry and Analysis'' and by the Institute of Mathematica, Academia Sinica}
\email{post@hendrik-herrmann.de or heherrma@math.uni-koeln.de}

\author[]{Chin-Yu Hsiao}
\address{Institute of Mathematics, Academia Sinica, 6F, Astronomy-Mathematics Building,
No.1, Sec.4, Roosevelt Road, Taipei 10617, Taiwan}
\thanks{The second-named author was partially supported by Taiwan Ministry of Science of Technology project
104-2628-M-001-003-MY2 and the Golden-Jade fellowship of Kenda Foundation.}
\email{chsiao@math.sinica.edu.tw or chinyu.hsiao@gmail.com}

\author[]{Xiaoshan Li}
\address{School of Mathematics
and Statistics, Wuhan University, Hubei 430072, China \& Institute of Mathematics, Academia Sinica, 6F, Astronomy-Mathematics Building,
No.1, Sec.4, Roosevelt Road, Taipei 10617, Taiwan}
\thanks{The third-named author was  supported by NSFC No. 11501422, Postdoctoral Science Foundation of China 2015M570660 and Central university research Fund 2042015kf0049}
\email{xiaoshanli@whu.edu.cn or xiaoshanli@math.sinica.edu.tw}

\setlength{\headheight}{14pt}

\begin{abstract}
Let $X$ be a compact strongly pseudoconvex CR manifold with a transversal CR $S^1$-action. In this paper, we establish the asymptotic expansion of Szeg\H{o} kernels of positive Fourier components and by using the asymptotics, we show that $X$ can be equivariant CR embedded into some $\mathbb C^N$ equipped with a simple \(S^1\)-action. An equivariant embedding of quasi-regular Sasakian manifold is also derived.
\end{abstract}

\maketitle

\section{Introduction}
Let $X$  be a compact
strongly pseudoconvex CR manifold.
The question of whether or not $X$ admits a CR embedding into
a complex Euclidean space has attracted a lot of attention.
By a theorem of  Boutet de Monvel~\cite{BdM1:74b}, any compact strongly pseudoconvex CR manifold of dimension greater than or equal to five can be CR embedded into $\mathbb C^N$ for some $N$. The classical example of non-embeddable three dimensional strongly
pseudoconvex CR manifold given by Rossi \cite{Ro65} (see also \cite{B79}) showed that an arbitrary real analytic deformation of the standard CR structure on the three-sphere may fail to be embeddable. There exists an extensive literature on the embaddability of deformed CR structures. For this subject, we refer the reader to \cite{BE90, Le92, Ep92, CCY1, CCY2} and the  references therein.
Given a compact strongly pseudoconvex CR manifold equipped with a locally free transversal CR $S^1$-action, it was shown in \cite{Le92, MY07} and also \cite{HM14I} that $X$ can always be CR embedded into some complex space. 

In this work, we attack the embedding problem for compact strongly pseudoconvex CR manifolds equipped with a transversal CR circle action from a pure analytic point of view. More precisely, we develop an asymptotic expansion for the Szeg\H{o} kernels (\(S_m\), see Section~\ref{subsec:SKDef}) concerning CR functions which lie in the space of positive Fourier components (\(H^0_{b,m}(X)\), see Definition \ref{def-16-09-01}).
Our main results involving that features are Theorem \ref{c2} and Theorem \ref{e2} which we deduce from a general result about Szeg\H{o} kernel expansion (see Theorem \ref{t1}) using partially the machinery developed by the second and third named authors in \cite{HL15, HL15a}. Theorem \ref{c2} describes the expansion on the regular part, i.e.~ the part of the manifold where \(S^1\) acts globally free. Here, the expansion works out well. Difficulties occur on the complement of the regular part, but we can still prove some expansion as stated in Theorem \ref{e2}. Roughly speaking, the problem is that the leading term of the expansion does not change smoothly (even not continuously) from the regular part to its complement.

Inspired by a work of the second named author  \cite{H14a}, our second main result in this paper is using Theorem \ref{c2} and Theorem \ref{e2} to construct a CR embedding which is equivariant with respect to a simple \(S^1\)-action on \(\C^N\) (see Theorem \ref{t-gue151127} ). Recently, Hsiao-Li-Marinescu \cite{HLM16} established the Kodaira embedding theorem for CR manifolds with circle action which complements the results of this paper with the study of the embedding in the presence of a positive line bundle but without the hypothesis of strict pseudoconvexity.

Before we state the precise embedding result, let us see some examples of
compact strongly pseudoconvex CR manifolds in $\C^N$ to get an idea how such simple \(S^1\)-actions could look like.

Example I: Let $X=\set{(z_1,z_2,\ldots,z_n)\in\Complex^n:\, \abs{z_1}^2+\abs{z_2}^2+\abs{z_3}^2+\cdots+\abs{z_n}^2=1}$ which is a  CR manifold with  a transversal CR
$S^1$-action (see Definition \ref{f1}):
\[e^{i\theta}\circ (z_1,z_2,\ldots,z_n)=(e^{im_1\theta}z_1,e^{im_2\theta}z_2,\ldots,e^{im_n\theta}z_n),\]
where $(m_1,\ldots,m_n)\in\mathbb N^n$.

Example II: $X=\set{(z_1,z_2,z_3)\in\Complex^3:\, \abs{z_1}^2+\abs{z_2}^2+\abs{z_3}^2+\abs{z^2_1+z_2}^4+\abs{z^3_2+z_3}^6=1}$. Then $X$ admits a transversal CR locally free $S^1$-action:
\[e^{i\theta}\circ (z_1,z_2,z_3)=(e^{i\theta}z_1,e^{2i\theta}z_2,e^{6i\theta}z_3).\]
\begin{definition}\label{d-gue151127}
We say that an $S^1$-action $e^{i\theta}$ on $\mathbb C^N$ is simple if
\[e^{i\theta}\circ(z_1,\ldots,z_N)=(e^{im_1\theta}z_1,\ldots,e^{im_N\theta}z_N),\ \ \forall (z_1,\ldots,z_N)\in\mathbb C^N,\ \ \forall \theta\in[0,2\pi),\]
where  $(m_1,\ldots,m_N)\in\mathbb N^N$. The minimal weight of a simple $S^1$-action  on $\mathbb C^N$ is given by \(\min\limits_{1\leq j\leq N}m_j\).
\end{definition}
Our goal is to prove the following equivariant embedding theorem for compact strongly pseudoconvex CR manifolds which admit a transversal CR $S^1$-action.

\begin{theorem}\label{t-gue151127}
Let $(X,T^{1,0}X)$ be a compact strongly pseudoconvex CR manifold with a transversal CR locally free $S^1$-action $e^{i\theta}$. Then, for any \(m_0\in\N\) we can find \(N\in\N\), a simple \(S^1\)-action on  \(\C^N\) with minimal weight bigger than \(m_0\) and an equivariant CR embedding
\begin{equation*}\Phi:X\To\mathbb C^N,~~
	x\mapsto(\Phi_1(x),\ldots,\Phi_N(x)).
\end{equation*}
More precisely, \(\Phi\) is an embedding, \(\Phi_1,\ldots,\Phi_N\) are CR functions and there exist {\tt \(m_1,\ldots,m_N\in \N\)}, \(m_j>m_0\) for \(j=1,\ldots,N\),
such that
\[\Phi(e^{i\theta}\circ x)=(e^{im_1\theta}\Phi_1(x),\ldots,e^{im_N\theta}\Phi_N(x))
=e^{i\theta}\circ\Phi(x),\ \ \forall x\in X,\ \ \forall\theta\in[0,2\pi),\]
where the simple $S^1$-action on $\mathbb C^N$ is given by $e^{i\theta}\circ (z_1, \cdots, z_N)=(e^{im_1\theta}z_1, \cdots, e^{im_N\theta}z_N).$

\end{theorem}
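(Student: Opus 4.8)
The plan is to assemble $\Phi$ from finitely many positive Fourier components of CR functions and to read the weights $m_1,\dots,m_N$ off the $S^1$-action on those components. For $m\in\mathbb Z$ write $H^0_b(X)_m$ for the space of CR functions $u$ with $u(e^{i\theta}\circ x)=e^{im\theta}u(x)$, and let $S_m$ be the Szeg\"o projector onto $H^0_b(X)_m$. The first and main task is to prove an asymptotic expansion of $S_m(x,x)$, with positive leading term, together with control on the off-diagonal size of $S_m(x,y)$ as $m\to+\infty$; this is the analytic heart of the argument and is where strong pseudoconvexity enters. Granting such an expansion, peak-section arguments of Kodaira type carried out in the transversal $S^1$-equivariant setting yield: for every sufficiently large $m$ divisible by $p:=\mathrm{lcm}\{\,\text{orders of stabilizers of points of }X\,\}$ (a finite number, since the action is locally free and $X$ is compact), the linear system $H^0_b(X)_m$ is base-point free, the map $\Psi_m\colon X\to\mathbb C^{d_m}$ attached to an $L^2$-orthonormal basis of $H^0_b(X)_m$ is a CR immersion, and $\Psi_m$ separates any two points lying on distinct $S^1$-orbits. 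Divisibility by $p$ is forced here because every $f\in H^0_b(X)_w$ vanishes at a point whose stabilizer has order $\ell$ unless $\ell\mid w$.

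Fix $m=kp$ with $k$ so large that the above holds for all of $m,m+1,\dots,m+p$, and set
\[
\Phi=\bigl(\Psi_m,\Psi_{m+1},\dots,\Psi_{m+p}\bigr)\colon X\To\mathbb C^N,\qquad N=d_m+d_{m+1}+\dots+d_{m+p}.
\]
Every entry of the block $\Psi_w$ lies in $H^0_b(X)_w$, so all $N$ entries of $\Phi$ are CR functions and $\Phi(e^{i\theta}\circ x)=\bigl(e^{im\theta}\Psi_m(x),\dots,e^{i(m+p)\theta}\Psi_{m+p}(x)\bigr)$; hence $\Phi$ is equivariant for the \emph{simple} $S^1$-action on $\mathbb C^N$ with weights $m,m+1,\dots,m+p$ (of multiplicities $d_m,d_{m+1},\dots,d_{m+p}$), all lying in $\mathbb N$ because $k$ is large. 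Already the block $\Psi_m$ makes $\Phi$ a base-point free CR immersion that separates points on distinct $S^1$-orbits.

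What is left is injectivity of $\Phi$ along a single $S^1$-orbit. Suppose $x_2=e^{i\theta_0}\circ x_1$ and $\Phi(x_1)=\Phi(x_2)$, and let $\ell$ be the order of the stabilizer of $x_1$; then $\ell\mid p$. For each weight $w$ with $\Psi_w(x_1)\neq0$ we have $e^{iw\theta_0}=1$, and by the vanishing property together with positivity of $S_w(x_1,x_1)$ (valid since $w$ is large and $\ell\mid w$) the set $W$ of such $w$ is exactly $\{\,j:\ m\le j\le m+p,\ \ell\mid j\,\}$. Since $\ell\mid m$ and $\ell\le p$, both $m$ and $m+\ell$ lie in $W$, so $\gcd W=\gcd(m,m+\ell)=\ell$; writing $\ell$ as an integer combination of the elements of $W$ gives $e^{i\ell\theta_0}=1$, i.e. $\theta_0\in\frac{2\pi}{\ell}\mathbb Z$, which means $x_2=x_1$. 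Therefore $\Phi$ is an injective CR immersion of the compact manifold $X$, hence a CR embedding; its image is a compact strongly pseudoconvex CR submanifold of $\mathbb C^N$ invariant under the simple action, and, since $\Phi$ intertwines the two $S^1$-actions, it carries the Reeb vector field $\xi$ to the induced vector field $i\sum_k m_k(z_k\frac{\partial}{\partial z_k}-\overline z_k\frac{\partial}{\partial\overline z_k})$ of the simple action. By the discussion in the introduction, $\Phi(X)$ is then a quasi-regular Sasakian manifold sitting inside $\mathbb C^N$.

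I expect the main obstacle to be the first step: producing a Szeg\"o kernel expansion for the positive Fourier components that is sharp and uniform enough --- positive leading coefficient, and decay of $S_m(x,y)$ on the natural scale $1/\sqrt m$ --- to feed into peak-section arguments, and doing it for a merely locally free rather than free action, so that the exceptional orbits and the divisibility constraint $p\mid m$ are handled uniformly over $X$. Everything afterwards is the equivariant Kodaira embedding argument together with the short piece of arithmetic, displayed above, that forces injectivity on each orbit using only the ``simple'' weights $m,m+1,\dots,m+p$.
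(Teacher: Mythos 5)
Your overall architecture — Szeg\H{o} kernel expansion for positive Fourier components, Kodaira-type peak sections giving immersion and separation, plus a divisibility argument for separation along a single orbit — matches the paper's strategy, and your arithmetic with the set $W$ and $\gcd W=\ell$ is a clean (arguably cleaner) way to handle on-orbit injectivity than the paper's choice of levels $km$ and $k(m+1)$ for each $1\le k\le l$. However, there is a genuine gap at the step you dispatch as ``peak-section arguments\ldots yield that $\Psi_m$ separates any two points lying on distinct $S^1$-orbits.'' This is precisely the difficulty the paper singles out: the diagonal Szeg\H{o} kernel is \emph{not} continuous across $X\setminus X_{\rm reg}$ (for $x_0\in X_k$ the leading term jumps by a factor of $k$, see Theorem \ref{e2} and the explicit computation in Section \ref{Sec: DiscussionExample}), and consequently the expansion $S_m(x,y)\equiv\frac{1}{2\pi}e^{im(\cdots)}\hat b(z,w,m)$ of Theorem \ref{c2} holds only \emph{locally uniformly} on $X_{\rm reg}$. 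There is therefore no uniform control of the normalized off-diagonal quantity $\abs{S_m(x,y)}^2/(S_m(x,x)S_m(y,y))$ near the exceptional set, and the standard peak-section separation breaks down when two sequences $\hat y_m,\hat z_m$ converge to two \emph{different} singular orbits (this is exactly Case II in the proof of Theorem \ref{main1}).

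The paper's own remedy is not a sharper expansion but a different device: Proposition \ref{p1} constructs, for finitely many singular base points $x_i$, CR sections at \emph{fixed} Fourier levels $k_i m_0(x_i),k_i m_1(x_i),k_i m_2(x_i)$ (independent of the asymptotic parameter $m$) whose Szeg\H{o} kernels concentrate on small $S^1$-saturated neighborhoods of the orbit of $x_i$; these are then appended as the extra coordinates $h_1,\dots,h_K$ to force global injectivity. Your map $\Phi=(\Psi_m,\dots,\Psi_{m+p})$ contains no such fixed-level blocks, so it does not obviously separate two points of $X\setminus X_{\rm reg}$ on different orbits, and the claim that a ``sharp and uniform enough'' expansion makes it so is exactly what the paper shows one cannot rely on. If you want to close the gap you would need to either (i) reproduce something like Proposition \ref{p1} and insert those finitely many additional CR functions into $\Phi$ (they are still positive Fourier components, so your equivariance and simple-action conclusion survive unchanged), or (ii) prove a genuinely uniform off-diagonal estimate across the strata $X_k$, which the paper's discussion and example strongly suggest is false in the naive form.
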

The equivariance, where a lower bound for the minimal weight of the simple \(S^1\)-action on the embedding space can be chosen arbitrarily large, distinguishes Theorem~\ref{t-gue151127} from the embedding results mentioned at the beginning of this section.

An application of Theorem \ref{t-gue151127} is an equivariant embedding result for quasi-regular Sasakian manifolds. Sasakian manifolds have gained prominence in physics and algebraic geometry, especially in String theory (see \cite{BG06, BGS08}).
Let \(X\) be a compact smooth manifold of dimension $2n-1, n\geq2$. The triple  $(X, g,\alpha)$ where \(g\) is a Riemannian metric and \(\alpha\) a real 1-form is called Sasakian manifold if  the cone $\mathcal{C}(X)=\{(x,t)\in X\times \R_{>0}\}$ is a K\"ahler manifold  with complex struture $J$ and K\"ahler form \(t^2d\alpha+2tdt\wedge\alpha\) which are compatible with the metric $t^2g +dt\otimes dt$ (see  \cite{Bl76}, \cite{BGS08}, \cite{Ge97}, \cite{OV06}, \cite{OV07}). As a consequence, $X$ is a compact strongly pseudoconvex CR manifold and the Reeb vector field $\xi$, defined by \(\alpha(\cdot)=g(\xi,\cdot)\), induces a transversal CR action on $X$. If the orbits of $\xi$ are closed, the Sasakian structure is called quasi-regular. In this case, the Reeb vector field generates a locally free transversal CR $S^1$-action on $X$. We thus can identify a compact quasi-regular Sasakian manifold with a compact strongly pseudoconvex CR manifold $(X,T^{1,0}X)$ with a transversal CR locally free $S^1$-action where the induced vector field of the $S^1$-action coincides with the Reeb vector field on $X$. From Theorem \ref{t-gue151127}, we get
\begin{theorem}
Let $X$ be a quasi-regular Sasakian manifold which admits a transversal CR $S^1$-action induced by the Reeb vector field. For any \(m_0\in\N\), there is an equivariant CR embedding of $X$ into some $\mathbb C^N$ equipped with a simple \(S^1\)-action with minimal weight bigger than \(m_0\).
\end{theorem}

The paper is organized as follows. In the rest of this section we will give an outline of the idea of the proof of Theorem~\ref{t-gue151127} and introduce some terminology. Section~\ref{sec1} contains the proofs of the results on Szeg\H{o} kernel expansion for positive Fourier components. The embedding result (Theorem \ref{t-gue151127}) is proven in Section~\ref{sec3}.

\subsection{The idea of the proof of Theorem~\ref{t-gue151127}}\label{subsec:IdeaOfProof} We refer the reader to Section~\ref{st}, Section~\ref{st-I} and Section~\ref{st-II} for some notations and terminology used here. Assume that $(X, T^{1,0}X)$ is a compact connected strongly pseudoconvex CR manifold of dimension $2n-1, n\geq 2$, with a transversal CR locally free $S^1$-action $e^{i\theta}$. Let $T$ denote the vector field induced by the $S^1$-action and let $\overline\partial_b$ be the tangential Cauchy-Riemann operator on $X$. For every $m\in\mathbb N$, let $H^0_{b,m}(X):=\{u\in C^\infty(X): \overline\partial_bu=0, Tu=imu\}$ be the $m$-th positive Fourier component  of the space of global smooth CR functions. The main inspiration of this paper is the following: In \cite{HL15a} the second and third-named author have shown  that ${\rm dim}H^0_{b, m}(X)\thickapprox m^{n-1}$ as $m\rightarrow\infty$. Hence, the space of CR functions which lie in the positive Fourier components is very large and we therefore ask whether $X$ can be CR embedded into some \(\C^N\) by CR functions which lie in the positive Fourier components. In this work we give an affirmative answer to this question and as a corollary, we deduce Theorem~\ref{t-gue151127}. More precisely, we will prove
\begin{theorem}\label{t-gue151127b}
Let $X$ be a compact connected strongly pseudoconvex CR manifold with a locally free transversal CR $S^1$-action. Then $X$ can be CR embedded into some complex space by the CR functions which lie in the positive Fourier components.\end{theorem}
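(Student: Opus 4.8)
The plan is to build the embedding from the CR functions living in the positive Fourier components $H^0_{b,m}(X)$, mimicking the classical Boutet de Monvel / Kodaira-type argument but with all ingredients chosen $S^1$-equivariantly. The key analytic input will be the Szeg\"o kernel expansion for positive Fourier components (Theorem \ref{t1}, Theorem \ref{c2} and Theorem \ref{e2}): for $m$ large, the $m$-th Fourier component of the Szeg\"o projection has a semiclassical expansion on the regular part with positive leading coefficient, and a weaker but still usable expansion on the complement of the regular part. First I would use this to establish the pointwise spanning property: for every $x\in X$ there exist $m_0$ and finitely many $f\in H^0_{b,m}(X)$ (for various $m\ge m_0$) with $f(x)\ne 0$. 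This follows from the fact that $S_m(x,x)$ does not vanish for $m$ large, which is exactly what the on-diagonal expansion gives (the leading term being a strictly positive multiple of $m^{n-1}$ on the regular part, and a positive quantity on the singular part by Theorem \ref{e2}). By compactness, a single $m_0$ works for all $x$, and in fact one can arrange that the functions in $\bigoplus_{m_0 \le m \le m_1} H^0_{b,m}(X)$ have no common zero.

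Next I would prove that these CR functions \emph{separate points} and \emph{give an immersion}. The standard trick is to consider the map $\Phi$ assembled from an $L^2$-orthonormal basis of $\bigoplus_{m_0\le m\le m_1}H^0_{b,m}(X)$ (or a suitable finite subfamily), and to show that its differential is injective along $T^{1,0}X$ and that it separates points, by contradiction using a peak-function / localized section argument: near a given point $x$ one constructs, out of the Szeg\"o kernel $S_m(\cdot, x)$, a CR function in a positive Fourier component that is sharply peaked at $x$, using the off-diagonal decay in the expansion (the kernel is $O(m^{-\infty})$ away from the diagonal and concentrates on a $m^{-1/2}$-neighborhood). Separation of the $S^1$-orbit directions requires an extra remark: a transversal CR function can have nonzero derivative in the Reeb direction precisely when it lies in a nonzero Fourier component, and since we use $m\ge m_0>0$ the Reeb direction is also covered. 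Then $\Phi$ restricted to finitely many such functions is an injective immersion, hence (by compactness of $X$) a CR embedding into some $\mathbb C^N$. Because every component lies in some $H^0_{b,m_j}(X)$, $\Phi$ is automatically built out of CR functions in the positive Fourier components.

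The main obstacle I expect is the behavior on the \emph{singular part} of $X$, i.e.\ the points where the $S^1$ action is only locally free and not free. There the Szeg\"o kernel expansion (Theorem \ref{e2}) is genuinely worse: the leading coefficient need not vary continuously across the boundary between the regular and singular strata, so one cannot naively extract uniform lower bounds for $S_m(x,x)$ by a single smooth expansion. The way around this is to work stratum by stratum: near a singular point the local model is a quotient $\mathbb C^n/\Gamma$ for a finite cyclic $\Gamma$, and CR functions in the Fourier component $H^0_{b,m}(X)$ with $m$ in the right residue class survive the quotient; combining the weaker expansion of Theorem \ref{e2} with the fact that $H^0_{b,m}(X)$ has dimension growing like $m^{n-1}$ still yields nonvanishing and peaking at singular points, provided one is careful to choose $m$ in the allowed congruence classes. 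Once the local spanning/separating statements are obtained on each stratum, the global embedding follows by the usual patching: take the union of the finite families of functions produced for each stratum, together with a finite family separating the (finitely many) strata from one another, and assemble $\Phi$. Finally, since each component of $\Phi$ is an $S^1$-eigenfunction of weight $m_j > 0$, $\Phi$ is automatically equivariant with respect to the simple $S^1$ action $(z_1,\dots,z_N)\mapsto (e^{im_1\theta}z_1,\dots,e^{im_N\theta}z_N)$ on $\mathbb C^N$, which is precisely the statement needed to deduce Theorem \ref{t-gue151127}.
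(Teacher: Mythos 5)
Your proposal follows the paper's general scheme: assemble the map from orthonormal bases of several positive Fourier components, use the Szeg\"o kernel expansions of Theorems~\ref{c2} and \ref{e2} for on-diagonal non-vanishing, use localized sections built from $S_m(\cdot,x_0)$ for immersion and separation, supplement with extra functions near $X\setminus X_{\rm reg}$, and observe that equivariance is automatic since every component is a $T$-eigenfunction of positive weight. You also correctly identify the central obstacle: the expansion is only locally uniform on $X_{\rm reg}$ and only point-fixed on $X\setminus X_{\rm reg}$, so the leading coefficient of $S_m(x,x)$ does not vary continuously across the strata and there is no uniform lower bound to extract from a single compactness argument.

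Where the proposal has a genuine gap is precisely at the step you call ``the usual patching.'' Covering $X$ by finitely many neighborhoods on which local spanning/separation hold does not by itself yield global injectivity, because the difficult pairs are $\hat y=e^{i\hat\theta}\circ \hat z_1$ with $\hat z_1$ very close to a singular point $x_i$ and $\hat\theta$ not a stabilizer angle of $x_i$: the peak estimate at $\hat z_1$ degenerates as $\hat z_1\to x_i$, so a localized-section argument alone cannot distinguish $\hat z$ from $\hat y$. The paper's actual mechanism (Proposition~\ref{p1}) is to fix, for each $x_i\in X_{k_i}$, three levels $m_0(x_i),m_1(x_i),m_2(x_i)$ and record four quantitative facts: a local embedding by $\Phi^{k_i}_{m_0}$; an angular gap $\cos k_i\theta\le 1-\delta_0$ for $\theta$ bounded away from the stabilizer angles; the key estimate that the ratio $S_{k_i(m_1+1)}(\cdot,x_i)/S_{k_i m_1}(\cdot,x_i)$ has real part within $\sigma\ll\delta_0$ of $1$ on a shrinking neighborhood $W(x_i)$; and a sharp concentration/decay bound at level $m_2$. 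Global injectivity is then proved by contradiction on sequences $\hat y_m\ne\hat z_m$ with $\Phi_m(\hat y_m)=\Phi_m(\hat z_m)$, split into five cases according to whether $\hat y,\hat z$ lie in $X_{\rm reg}$ and whether they are on the same orbit; the ratio estimate combined with the angular gap is exactly what rules out the bad case just described, since multiplication by $e^{ik_i\hat\theta}$ would move the ratio too far from $1$. Your ``CR functions in the right residue class survive the quotient'' and ``dimension growth $m^{n-1}$'' remarks are correct but do not substitute for this ratio argument; without it (or an equivalent separation device) the proof does not close.
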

Motivated by the second-named author's work on the Kodaira embedding theorem (\cite{H14a}, \cite{H15}, \cite{H14b}), we will  use the asymptotic expansion of the Szeg\H{o}  kernel with respect to $H^0_{b, m}(X)$ to prove Theorem~\ref{t-gue151127}.
For every $k\in\mathbb N$, let $X_k$ and $X_{\rm reg}$ be defined as in (\ref{e-gue150614}).
Let $\{f_j\}_{j=1}^{d_{m}}\subset H^0_{b, m}(X)$ be an orthonormal basis. The $m$-th Szeg\H{o}  kernel $S_m(x,y)$ is given by $S_m(x,y):=\sum^{d_m}_{j=1}f_j(x)\overline{ f_j(y)}$.
Let us first consider \[\begin{split}\Psi^1_m:X\To\mathbb C^{d_m}, x\mapsto(f_1(x),\ldots,f_{d_m}(x)).\end{split}\] We notice that $S_m(x,y)=0$ on $X_k$ if $k\nmid m$. From this observation, we see that $X\setminus X_{\rm reg}\neq\emptyset$ implies that $\Psi^1_m$ can not be an embedding even if $m$ is large. Suppose $X=X_1\cup X_2\cup\cdots\cup X_l$. For $1\leq k\leq l$, let $\{f_j^k\}_{j=1}^{d_{km}}$ be an orthonormal basis of $H^0_{b, km}(X)$. We next consider the map  \[\begin{split}\Psi_m:X&\To\mathbb C^{\tilde N_m},\\ x&\mapsto(f^1_1(x),\ldots,f^1_{d_m}(x),f^2_1(x),\ldots,f^2_{d_{2m}}(x),\ldots,f^l_1(x), \ldots,f^l_{d_{lm}}(x)),\end{split}\]where $\tilde N_m=d_m+d_{2m}+\cdots+d_{lm}$.
In Section~\ref{Sec: asymp exp on Xreg}, we will show that on a canonical coordinate patch $D\subset X_{{\rm reg}}$ with canonical coordinates $x=(z,\theta)$, we have\begin{equation}\label{e-gue151127g}\begin{split}&S_m(x, y)\equiv\frac{1}{2\pi}e^{im(x_{2n-1}-y_{2n-1}+\Phi(z, w))}\hat b(z, w, m)\mod O(m^{-\infty}),\\&\hat b(z, w,m)\sim\sum^\infty_{j=0}m^{n-1-j}\hat b_j(z,w),\hat b_j(z, w)\in C^{\infty}(D\times D),\ \  j=0, 1, 2, \cdots,\\&\hat b_0(z, z)\neq0\end{split}\end{equation}(see Theorem~\ref{c2}). Moreover, for fixed $x_0\in X_k$, $k>1$, one finds that $k\nmid m$ implies $S_m(x, x_0)=0$ and that $k\mid m$ leads to \begin{equation}\label{e-gue151127gI}S_m(x, x_0)\equiv \frac{k}{2\pi}e^{im(x_{2n-1}+\Phi(z, 0))}\hat b(z, 0, m)\mod O(m^{-\infty})\end{equation} for some canonical coordinate patch $D$ with canonical coordinates $x=(z,\theta)$, $x_0\in D$, $(z(x_0),\theta(x_0))=(0,0)$ (see Theorem~\ref{e2}). It should be mentioned that \eqref{e-gue151127g} and \eqref{e-gue151127gI} are based on Boutet de Monvel-Sj\"ostrand's classical result on Szeg\H{o}  kernels~\cite{BS76} (after the seminal work ~\cite{Fe74} of Fefferman) and the complex stationary phase formula of Melin-Sj\"ostrand~\cite{MS75}. From \eqref{e-gue151127g} and \eqref{e-gue151127gI}, we can check that $\Psi_m$ is an immersion if $m$ is large. But $\Psi_m$ is not globally injective: in general, assume that $m$ is even, then we can not separate the points $p\in X_k$ and $e^{i\frac{\pi}{k}}\circ p$, where $k>1$.  To overcome this difficulty, let $\{g_j^k\}_{j=1}^{d_{k(m+1)}}$
be an orthonormal basis of $ H^0_{b, k(m+1)}(X)$ for $1\leq k\leq l$. For  any $k$, $1\leq k\leq l$, we define a CR map from $X$ to Euclidean space as follows
\begin{equation*}
\Phi^k_m: X\rightarrow \mathbb C^{d_{km}+d_{k(m+1)}}, x\mapsto (f_1^k(x),\cdots, f_{d_{km}}^k(x), g_1^k(x), \cdots, g_{d_{k(m+1)}}^k(x) ),
\end{equation*}
and we set
\begin{equation*}
\Phi_m: X\rightarrow \mathbb C^{N_m}, x\rightarrow (\Phi^1_m(x), \cdots, \Phi^l_m(x)),\end{equation*}
where $N_m=\sum\limits_{k=1}^l (d_{km}+d_{k(m+1)}).$  We thus try to prove that $\Phi_m$ is globally injective. It is not difficult to see that $\Phi_m$ can separate the points $p\in X_k$ and $e^{i\theta}\circ p$, where $p\neq e^{i\theta}\circ p$,  if $m$ is large enough. But another \emph{difficulty} comes from the fact that the expansion \eqref{e-gue151127g} converges only locally uniformly on $X_{{\rm reg}}$ and on $X\setminus X_{{\rm reg}}$, we can only get an expansion of $S_m(x,x_0)$ for fixed $x_0\in X\setminus X_{{\rm reg}}$ and this causes that $\Phi_m$ could not be globally injective. To overcome this difficulty, we analyze carefully the behavior of the Szeg\H{o}  kernel $S_m(x,y)$ near the complement of $X_{\rm reg}$ and in Section~\ref{s-f}, we can construct many CR functions $h_1,\ldots,h_K$ with large potentials near the complement of $X_{\rm reg}$ which lie in the positive Fourier components such that the map \[x\in X\To (\Phi_m(x),h_1(x),\ldots,h_K(x))\in\mathbb C^{N_m+K}\] is an embedding if $m$ is large (see Theorem~\ref{main1}). This finishes the proof of Theorem~\ref{t-gue151127b}.

\subsection{Set up and terminology}\label{st}

Let $(X, T^{1,0}X)$ be a compact connected orientable CR manifold of dimension $2n-1, n\geq 2$, where $T^{1,0}X$ is the CR structure of $X$. That is $T^{1,0}X$ is a subbundle of the complexified tangent bundle $\mathbb{C}TX$ of rank $n-1$, satisfying $T^{1,0}X\cap T^{0,1}X=\{0\}$, where $T^{0,1}X=\overline{T^{1,0}X}$ and $[\mathcal V,\mathcal V]\subset\mathcal V$, where $\mathcal V=C^\infty(X, T^{1,0}X)$.

We assume that $X$ admits an $S^1$-action: $S^1\times X\rightarrow X, (e^{i\theta}, x)\rightarrow e^{i\theta}\circ x$. Here we use $e^{i\theta}$ to denote the $S^1$-action. Set $X_{\rm reg}=\{x\in X: \forall e^{i\theta}\in S^1, ~\text{if}~e^{i\theta}\circ x=x,~ \text{then}~e^{i\theta}=\rm id\}$. For every $k\in\mathbb N$, put
\begin{equation}\label{e-gue150614}
X_k:=\set{x\in X: e^{i\theta}\circ x\neq x, \forall\theta\in(0,\frac{2\pi}{k}), e^{i\frac{2\pi}{k}}\circ x=x}.\end{equation}
Thus, $X_{\rm reg}=X_1$. In this paper, for simplicity we always assume that $X_{1}\neq\emptyset.$ Actually, one can re-normalize the $S^1$-action by lifting such that the new $S^1$-action satisfies  $X_1\neq\emptyset$, (see \cite[Remark 1.14]{CHT15}).

Let $T\in C^\infty(X, TX)$ be the global real vector field induced by the $S^1$-action given as follows
\begin{equation}\label{Eq: IndVec}
(Tu)(x)=\frac{\partial}{\partial\theta}\left(u(e^{i\theta}\circ x)\right)\Big|_{\theta=0},~u\in C^\infty(X).
\end{equation}
\begin{definition}\label{f1}
We say that the $S^1$-action $e^{i\theta} ~(0\leq\theta<2\pi$) is CR if
$$[T, C^\infty(X, T^{1,0}X)]\subset C^\infty(X, T^{1,0}X),$$
where $[~,~ ]$ is the Lie bracket between the smooth vector fields on $X$.
Furthermore, we say that the $S^1$-action is transversal if for each $x\in X$ one has
$$\mathbb CT(x)\oplus T_x^{1,0}(X)\oplus T_x^{0,1}X=\mathbb CT_xX.$$
\end{definition}
We throughout assume that $(X, T^{1,0}X)$ is a compact connected CR manifold with a transversal CR locally free $S^1$-action and we denote by $T$ the global vector field induced by the $S^1$-action. Let $\omega_0\in C^\infty(X,T^*X)$ be the global real one form uniquely determined by $\langle\,\omega_0\,,\,u\,\rangle=0$, for every $u\in T^{1,0}X\oplus T^{0,1}X$ and $\langle\,\omega_0\,,\,T\,\rangle=-1$.

We recall
\begin{definition}\label{d-1.2}
For $x\in X$, the Levi-form $\mathcal L_x$ associated with the CR structure is the Hermitian quadratic form on $T_x^{1,0}X$ defined as follows. For any $U, V\in T_x^{1,0}X$, pick $\mathcal U, \mathcal V\in C^\infty(X, T^{1,0}X)$ such that $\mathcal U(x)=U, \mathcal V(x)=V$. Set
\begin{equation*}
\mathcal L_x(U, \overline V)=\frac{1}{2i}\langle[\mathcal U, \overline{\mathcal V}](x), \omega_0(x)\rangle
\end{equation*}
where $[~,~]$ denotes the Lie bracket between smooth vector fields. Note that $\mathcal L_x(U, \overline V)$ does not depend on the choice of $\mathcal U$ and $\mathcal V$.
\end{definition}
\begin{definition}\label{Def: spsc}
The CR structure on $X$ is called pseudoconvex at $x\in X$ if $\mathcal L_x$ is semi-positive definite. It is called strongly pseudoconvex at $x$ if $\mathcal L_x$ is positive definite. If the CR structure is (strongly) pseudoconvex at every point of $X$, then $X$ is called a (strongly) pseudoconvex CR manifold.
\end{definition}
Denote by $T^{\ast 1,0}X$ and $T^{\ast0,1}X$ the dual bundles of
$T^{1,0}X$ and $T^{0,1}X$, respectively. Define the vector bundle of $(0,q)$-forms by
$\Lambda^qT^{\ast0,1}X$. Let $D\subset X$ be an open subset. Let $\Omega^{0,q}(D)$
denote the space of smooth sections of $\Lambda^qT^{\ast0, 1}X$ over $D$.

Fix $\theta_0\in [0, 2\pi)$. Let
$$d e^{i\theta_0}: \mathbb CT_x X\rightarrow \mathbb CT_{e^{i\theta_0}x}X$$
denote the differential map of $e^{i\theta_0}: X\rightarrow X$. By the properties of transversal CR $S^1$-actions, we can check that
\begin{equation}\label{a}
\begin{split}
de^{i\theta_0}:T_x^{1,0}X\rightarrow T^{1,0}_{e^{i\theta_0}x}X,\\
de^{i\theta_0}:T_x^{0,1}X\rightarrow T^{0,1}_{e^{i\theta_0}x}X,\\
de^{i\theta_0}(T(x))=T(e^{i\theta_0}x).
\end{split}
\end{equation}
Let $(e^{i\theta_0})^\ast: \Lambda^q(\mathbb CT^\ast X)\rightarrow\Lambda^q(\mathbb CT^\ast X)$ be the pull back of $e^{i\theta_0}, q=0,1\cdots, n-1$. From (\ref{a}), we can check that for every $q=0, 1,\cdots, n-1$
\begin{equation}\label{j1}
(e^{i\theta_0})^\ast: \Lambda^qT^{\ast0,1}_{e^{i\theta_0}x}X\rightarrow \Lambda^qT_x^{\ast0,1}X.
\end{equation}

Let $u\in\Omega^{0,q}(X)$ be a section. The Lie derivative of $u$ along the direction $T$ is denoted by $Tu$.
From (\ref{j1}) we have $Tu\in\Omega^{0, q}(X)$ for all $u\in\Omega^{0, q}(X)$.

Let $\overline\partial_b:\Omega^{0,q}(X)\rightarrow\Omega^{0,q+1}(X)$ be the tangential Cauchy-Riemann operator. From (\ref{a}), it is straightforward to deduce
\begin{equation}\label{c}
T\overline\partial_b=\overline\partial_bT~\text{on}
~\Omega^{0,q}(X).
\end{equation}

For every $m\in\mathbb Z$, put $\Omega^{0,q}_m(X):=\{u\in\Omega^{0,q}(X): Tu=imu\}$. We denote by $\overline\partial_{b, m}$ the restriction of $\overline\partial_b$ to $\Omega^{0, q}_m(X)$.
From (\ref{c}) we have the $\ddbar_{b, m}$-complex for every $m\in\mathbb Z$:
\begin{equation}\label{e-gue140903VI}
\ddbar_{b, m}:\cdots\To\Omega^{0,q-1}_m(X)\To\Omega^{0,q}_m(X)\To\Omega^{0,q+1}_m(X)\To\cdots.
\end{equation}
For $m\in\mathbb Z$, the $q$-th $\ddbar_{b, m}$-cohomology is given by
\begin{equation}\label{a8}
H^{q}_{b,m}(X):=\frac{{\rm Ker\,}\ddbar_{b}:\Omega^{0,q}_m(X)\To\Omega^{0,q+1}_m(X)}{\operatorname{Im}\ddbar_{b}:\Omega^{0,q-1}_m(X)\To\Omega^{0,q}_m(X)}.
\end{equation}
Moreover,  we have (see Theorem 1.13 in \cite{HL15a})
\begin{equation}\label{a1}
{\rm dim} H^q_{b, m}(X)<\infty, ~\text{for all}~ q=0, \ldots, n-1.\end{equation}

\begin{definition}\label{def-16-09-01}
A function $u\in C^\infty(X)$ is a Cauchy-Riemann function (CR function for short)
if $\overline\partial_bu=0$, that is $\overline Zu=0$ for all $Z\in C^{\infty}(X, T^{1, 0}X)$. For $m\in \mathbb N$, $H^0_{b, m}(X)$ is called the $m$-th positive Fourier component of the space of CR functions.
\end{definition}

\subsection{Hermitian CR geometry}\label{st-I}

\begin{definition}\label{Def: T-rigid metric}
Let $D$ be an open set and let $V\in C^\infty(D, \mathbb CTX)$ be a vector field over $D$. We say that $V$ is rigid if
\begin{equation*}
de^{i\theta_0}(V(x))=V(e^{i\theta_0}x)
\end{equation*}
holds for any $x\in D$ and $\theta_0\in[0,2\pi)$ satisfying  $ e^{i\theta_0}\circ x\in D.$
\end{definition}

\begin{definition}
Let $\langle\cdot|\cdot\rangle$ be a Hermitian metric on $\mathbb CTX$.
We say that $\langle\cdot|\cdot\rangle$ is rigid if for all rigid vector fields $V, W$ on $D$, where $D$ is any open set, we have
\begin{equation*}
\langle V(x)|W(x)\rangle=\langle (de^{i\theta_0}V)(e^{i\theta_0}\circ x)|(de^{i\theta_0}W)(e^{i\theta_0}\circ x)\rangle, \forall x\in D, \theta_0\in[0,2\pi).
\end{equation*}
\end{definition}

\begin{lemma}[Theorem 9.2 in \cite{H14b}]\label{a7}
Let $X$ be a compact CR manifold with a transversal CR $S^1$-action. There always exists a rigid Hermitian metric $\langle\cdot|\cdot\rangle$ on $\mathbb CTX$ such that $T^{1,0}X\bot T^{0,1}X,$ $ T\bot(T^{1,0}X\oplus T^{0,1}X),$ $ \langle T|T\rangle=1$ and $\langle u|v\rangle$ is real if $u, v$ are real tangent vectors.
\end{lemma}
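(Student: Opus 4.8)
The plan is to obtain the metric by averaging an auxiliary, not necessarily $T$-rigid, metric over the $S^1$ action, exactly as one constructs invariant metrics for compact group actions. First I would construct a Hermitian metric $\langle\cdot\,|\,\cdot\rangle_0$ on $\mathbb CTX$ with all of the stated algebraic features but without the rigidity requirement. By transversality one has the splitting $\mathbb CT_xX=\mathbb CT(x)\oplus T^{1,0}_xX\oplus T^{0,1}_xX$ at every point; let $H={\rm Re}\,(T^{1,0}X\oplus T^{0,1}X)$ be the Levi distribution, with its complex structure $J$ whose $+i$-eigenbundle is $T^{1,0}X$. I would pick any smooth Riemannian metric on the real bundle $H$, replace it fiberwise by its $J$-average $\tfrac12\bigl(g(\cdot,\cdot)+g(J\cdot,J\cdot)\bigr)$, declare $\mathbb RT$ orthogonal to $H$ and of unit length, and complexify. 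The resulting $\langle\cdot\,|\,\cdot\rangle_0$ is then Hermitian and positive definite, satisfies $T^{1,0}X\bot T^{0,1}X$ (this is precisely the $J$-invariance on $H$, after complexification), $T\bot(T^{1,0}X\oplus T^{0,1}X)$, $\langle T\,|\,T\rangle_0=1$, and is real on real tangent vectors, being the complexification of a real inner product.

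Next I would average over $S^1$: for $u,v\in\mathbb CT_xX$ set
\begin{equation*}
\langle u\,|\,v\rangle_x:=\frac{1}{2\pi}\int_0^{2\pi}\bigl\langle de^{i\theta}(u)\,\big|\,de^{i\theta}(v)\bigr\rangle_{0,\,e^{i\theta}\circ x}\,d\theta .
\end{equation*}
This is smooth in $x$ (the integrand depends smoothly on $(\theta,x)$) and positive definite (each $de^{i\theta}$ is a linear isomorphism and $\langle\cdot\,|\,\cdot\rangle_0$ is positive definite). The key point is that all requested normalizations pass to the average, and this is immediate from (\ref{a}): since $de^{i\theta}$ preserves $T^{1,0}X$, preserves $T^{0,1}X$, and sends $T(x)$ to $T(e^{i\theta}x)$, the relevant integrands vanish identically, giving $T^{1,0}X\bot T^{0,1}X$ and $T\bot(T^{1,0}X\oplus T^{0,1}X)$; moreover $\langle T\,|\,T\rangle_x=\frac{1}{2\pi}\int_0^{2\pi}\langle T(e^{i\theta}x)\,|\,T(e^{i\theta}x)\rangle_0\,d\theta=1$, and reality on real tangent vectors survives because $de^{i\theta}$ maps real vectors to real vectors. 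For $T$-rigidity, I would take $T$-rigid vector fields $V,W$ on an open set $D$ and points $x\in D$, $\theta_0$ with $e^{i\theta_0}\circ x\in D$; using $V(e^{i\theta_0}\circ x)=de^{i\theta_0}(V(x))$, the cocycle relation $de^{i\theta}\circ de^{i\theta_0}=de^{i(\theta+\theta_0)}$, and the $2\pi$-periodicity of the integrand under the substitution $\theta\mapsto\theta-\theta_0$, one obtains $\langle V(e^{i\theta_0}\circ x)\,|\,W(e^{i\theta_0}\circ x)\rangle=\langle V(x)\,|\,W(x)\rangle$, which is precisely the $T$-rigidity condition.

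I do not expect a genuine obstacle here: the statement is a standard consequence of the compactness of $S^1$ together with the splitting furnished by the transversal CR structure, and indeed it is Theorem~9.2 of \cite{H14b}, so one may also simply invoke that reference. The only places requiring care are Step~1, where the metric must simultaneously be $J$-invariant on $H$, split $\mathbb RT$ off orthogonally, and be normalized on $T$; and the bookkeeping in the rigidity verification, where one has to track the cocycle identity $de^{i(\theta+\theta_0)}=de^{i\theta}\circ de^{i\theta_0}$ and perform the change of variables correctly.
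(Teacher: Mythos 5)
Your argument is correct and is the standard one: the paper itself does not prove this lemma but cites it from Theorem~9.2 of \cite{H14b}, and the construction there is exactly the averaging argument you give (build an auxiliary metric adapted to the splitting $\mathbb{C}T_xX=\mathbb{C}T(x)\oplus T^{1,0}_xX\oplus T^{0,1}_xX$, then integrate over $S^1$ using that $de^{i\theta}$ preserves the splitting by (\ref{a})). Your verification that $J$-invariance on the Levi distribution complexifies to $T^{1,0}X\perp T^{0,1}X$, and the substitution $\theta\mapsto\theta-\theta_0$ in the averaged integral to get $T$-rigidity, are both sound.
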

From now on, we fix a rigid Hermitian metric $\langle\cdot|\cdot\rangle$ on $\mathbb CTX$ satisfying all the properties in Lemma~\ref{a7}.
The Hermitian metric $\langle\cdot|\cdot\rangle$ on $\mathbb CTX$ induces by duality a Hermitian metric on $\mathbb CT^\ast X$ and also on the bundles of $(0,q)$-forms $\Lambda^q T^{\ast 0, 1}X, q=0,1\cdots,n-1.$ We will denote all these induced
metrics by $\langle\cdot|\cdot\rangle$. For every $v\in \Lambda^q T^{\ast0, 1}X$, we write
$|v|^2:=\langle v|v\rangle$. We have the pointwise orthogonal decompositions:
\begin{equation*}
\begin{split}
&\mathbb CT^{\ast}X=T^{\ast1,0}X\oplus T^{\ast0,1}X\oplus\{\lambda\omega_0:\lambda\in\mathbb C\},\\
&\mathbb CTX=T^{1,0}X\oplus T^{0,1}X\oplus\{\lambda T:\lambda\in\mathbb C\}.
\end{split}
\end{equation*}

For  $p\in X$, locally there is an orthonormal frame $\{U_1, \ldots, U_{n-1}\}$ of $T^{1, 0}X$  such that the Levi-form $\mathcal L_p$ is diagonal with respect to this frame. That is, $\mathcal L_p(U_i, \overline{U_j})=\lambda_j\delta_{ij}$, where $\delta_{ij}=1$ if $i=j$, $\delta_{ij}=0$ if $i\neq j$. The entries $\{\lambda_1, \ldots, \lambda_{n-1}\}$ are called the eigenvalues of the Levi-form at $p$ with respect to the rigid Hermitian metric $\langle\cdot|\cdot\rangle$. Moreover, the determinant of $\mathcal L_p$ is defined by $\det\mathcal L_p=\lambda_1(p)\cdots\lambda_{n-1}(p)$.
\subsection{Canonical local coordinates}\label{q}\label{st-II}

In this work, we need the following result due to Baouendi-Rothschild-Treves, (see \cite{BRT85}).

\begin{theorem}\label{j}
Let $X$ be a compact CR manifold of ${\rm dim}X=2n-1, n\geq2$ with a transversal CR $S^1$-action. Let $\langle\cdot|\cdot\rangle$ be a rigid Hermitian metric on $X$ as in Lemma \ref{a7}.
For $x_0\in X$, there exists a local patch $D$ and local coordinates $(x_1,\cdots,x_{2n-1})=(z,\theta)=(z_1,\cdots,z_{n-1},\theta), z_j=x_{2j-1}+ix_{2j}, 1\leq j\leq n-1, x_{2n-1}=\theta$, centered at $x_0$. In terms of these coordinates,
$D=\{(z, \theta)\in\mathbb C^{n-1}\times\mathbb R: |z|<\varepsilon, |\theta|<\delta\}$ and on $D$
\begin{equation}\label{e-can}
\begin{split}
&T=\frac{\partial}{\partial\theta}\\
&Z_j=\frac{\partial}{\partial z_j}+i\frac{\partial\varphi(z)}{\partial z_j}\frac{\partial}{\partial\theta},j=1,\cdots,n-1,
\end{split}
\end{equation}
where $\{Z_j(x)\}_{j=1}^{n-1}$ form a basis of $T_x^{1,0}X$, for each $x\in D$ and $\varphi(z)\in C^\infty(D,\mathbb R)$ is independent of $\theta$. Moreover, on $D$, $\varphi(z)=\sum\limits_{j=1}^{n-1}\lambda_j|z_j|^2+O(|z|^3), \forall (z, \theta)\in D$, where $\{\lambda_j\}_{j=1}^{n-1}$ are the eigenvalues of the Levi-form of $X$ at $x_0$ with respect to the given rigid Hermitian metric on $X$. We call $D$ a canonical local patch and $(z, \theta, \varphi)$ canonical coordinates centered at $x_0$.
\end{theorem}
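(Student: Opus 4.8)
The statement is the classical Baouendi--Rothschild--Treves normal form, and the plan is to reprove it by putting the pair $(T,T^{1,0}X)$ into the asserted shape near $x_0$ and then using up the residual freedom in the coordinates to normalize $\varphi$. First I would straighten $T$: since the $S^1$ action is transversal, $T$ is nowhere vanishing, so the flow box theorem gives local coordinates $(w_1,\dots,w_{2n-2},\theta)$ centered at $x_0$ with $T=\partial/\partial\theta$; put $S=\{\theta=0\}$, a real $(2n-2)$-dimensional slice through $x_0$ transverse to the orbits, and let $\pi$ be the projection dropping the $\partial/\partial\theta$-component. For a local frame $L_1,\dots,L_{n-1}$ of $T^{1,0}X$, the CR property (Definition~\ref{f1}) gives $[T,L_j]=\sum_k\gamma_{jk}L_k$ for some smooth $\gamma_{jk}$; comparing components in the coordinate frame shows that $M_j:=\pi L_j$ solve a linear ODE $\partial M_j/\partial\theta=\sum_k\gamma_{jk}M_k$ along the $\theta$-flow, so the span $V:=\langle M_1,\dots,M_{n-1}\rangle\subset\Complex TS$ is independent of $\theta$ (the flow of a linear ODE preserves the span of a frame), and by transversality $V\cap\overline V=\{0\}$, $V\oplus\overline V=\Complex TS$, so $V$ is an almost complex structure on $S$. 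Lifting sections of $V$ back uniquely into $T^{1,0}X$ (using that $\pi|_{T^{1,0}X}$ is an isomorphism onto $V$) and applying the CR property once more, one checks the lifts can be taken $\theta$-independent, and then the integrability $[C^\infty(X,T^{1,0}X),C^\infty(X,T^{1,0}X)]\subset C^\infty(X,T^{1,0}X)$ descends to involutivity of $V$ on $S$.

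By the Newlander--Nirenberg theorem $(S,V)$ is a complex manifold; choose holomorphic coordinates $z_1,\dots,z_{n-1}$ on $S$ near $x_0$ with $z_j(x_0)=0$ and extend them by $z_j(w,\theta):=z_j(w)$, so $Tz_j=0$. Then $(z_1,\dots,z_{n-1},\theta)$ are coordinates near $x_0$, $T=\partial/\partial\theta$, and since $\{\partial/\partial z_j\}$ frames $V$, the unique lifts $Z_j$ of $\partial/\partial z_j$ into $T^{1,0}X$ have the form $Z_j=\partial/\partial z_j+a_j\,\partial/\partial\theta$ and frame $T^{1,0}X$. From $[T,Z_j]=(\partial a_j/\partial\theta)\,\partial/\partial\theta\in T^{1,0}X\cap\Complex T=\{0\}$ one gets $\partial a_j/\partial\theta=0$, and from $[Z_j,Z_k]=(\partial a_k/\partial z_j-\partial a_j/\partial z_k)\,\partial/\partial\theta\in T^{1,0}X\cap\Complex T=\{0\}$ one gets $\partial a_j/\partial z_k=\partial a_k/\partial z_j$; hence by the Poincar\'e lemma in the $z$-variables (with $\overline z$ as parameters) $a_j=\partial f/\partial z_j$ for a smooth, $\theta$-independent function $f$. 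Setting $\varphi:=\operatorname{Im}f$ and replacing $\theta$ by $\theta-\operatorname{Re}f(z,\overline z)$ (which keeps $T=\partial/\partial\theta$ and the $T$-invariance of the $z_j$) yields $Z_j=\partial/\partial z_j+i(\partial\varphi/\partial z_j)\,\partial/\partial\theta$ with $\varphi$ real and $\theta$-independent; equivalently $\omega_0=-d\theta+i(\partial-\overline\partial)\varphi$.

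It remains to normalize $\varphi$ at $x_0$. After translating, $(z,\theta)(x_0)=(0,0)$. A change $\theta\mapsto\theta-\operatorname{Re}q(z)$ with $q$ a holomorphic polynomial again preserves all the above and, by matching $(1,0)$-parts in $\omega_0=-d\theta+i(\partial-\overline\partial)\varphi$, replaces $\varphi$ by $\varphi-\operatorname{Im}q(z)$; taking $q$ of degree $\le 2$ so as to cancel $\varphi(x_0)$, the linear part of $\varphi$, and its holomorphic plus antiholomorphic quadratic part, we reach $\varphi(z)=\sum_{j,k}B_{jk}z_j\overline z_k+O(|z|^3)$ with $(B_{jk})$ Hermitian. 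Finally a $\Complex$-linear change $z\mapsto Mz$, which preserves the normal form, can be chosen to make $\{Z_j(x_0)\}$ orthonormal for the $T$-rigid metric of Lemma~\ref{a7} and simultaneously diagonalize $(B_{jk})$ (possible since the metric is a positive definite Hermitian form); this leaves $\varphi(z)=\sum_j\mu_j|z_j|^2+O(|z|^3)$. A direct computation using $\omega_0=-d\theta+i(\partial-\overline\partial)\varphi$ and $\langle\omega_0,T\rangle=-1$ gives $[Z_j,\overline{Z_k}]=-2i\,(\partial^2\varphi/\partial z_j\partial\overline z_k)\,\partial/\partial\theta$, whence
\[
\mathcal L_{x_0}(Z_j(x_0),\overline{Z_k(x_0)})=\frac{1}{2i}\langle[Z_j,\overline{Z_k}](x_0),\omega_0(x_0)\rangle=\frac{\partial^2\varphi}{\partial z_j\partial\overline z_k}(0)=\mu_j\delta_{jk};
\]
since the frame is orthonormal, the $\mu_j$ are by definition the eigenvalues $\lambda_j$ of the Levi form at $x_0$ with respect to the $T$-rigid metric, and shrinking the neighborhood to the required polydisc completes the argument.

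The only genuinely non-formal step, and the one I expect to be the main obstacle, is the descent of the CR structure to the (local) leaf space of the $T$-foliation: showing that $V$ is a well-defined, $\theta$-independent, involutive almost complex structure on $S$, and invoking Newlander--Nirenberg. This is exactly where the hypothesis $[T,C^\infty(X,T^{1,0}X)]\subset C^\infty(X,T^{1,0}X)$ is indispensable. Everything afterwards is bookkeeping with coordinate changes; the one point requiring care is to check that each successive change ($\theta\mapsto\theta+(\text{function of }z,\overline z)$, and then the linear change in the $z_j$) preserves the normal form already achieved and does not reintroduce lower-order terms in $\varphi$.
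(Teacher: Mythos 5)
Your proof is correct and complete up to the standard appeals to Newlander--Nirenberg and the $\partial$-Poincar\'e lemma. Note, however, that the paper does not actually prove Theorem~\ref{j}: it simply cites Baouendi--Rothschild--Treves \cite{BRT85}, so there is no in-paper proof to compare against. What you have written is a faithful reconstruction of the BRT argument: straighten $T$, descend the CR structure to a local slice (using $[T,C^\infty(X,T^{1,0}X)]\subset C^\infty(X,T^{1,0}X)$ to show the span of $\pi L_j$ is $\theta$-independent and involutive), apply Newlander--Nirenberg, lift back to get $Z_j=\partial_{z_j}+a_j\partial_\theta$ with $\theta$-independent $a_j$ obeying the symmetric-derivative condition, write $a_j=\partial_{z_j}f$, absorb $\operatorname{Re}f$ into $\theta$, and finish by a quadratic holomorphic reparametrization plus a linear change that simultaneously makes the frame orthonormal and diagonalizes the Hessian. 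The computation $[Z_j,\overline{Z_k}]=-2i\,\varphi_{z_j\overline z_k}\,\partial_\theta$ and the resulting identity $\mathcal L_{x_0}(Z_j,\overline{Z_k})=\varphi_{z_j\overline z_k}(0)$ are also correct and match Definition~\ref{d-1.2} with the sign convention $\langle\omega_0,T\rangle=-1$.

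One phrase deserves a caution: ``by the Poincar\'e lemma in the $z$-variables (with $\overline z$ as parameters)'' is loose, since $z$ and $\overline z$ are not independent real parameters and the real $1$-form $\sum a_j\,dz_j$ is in general \emph{not} $d$-closed (only its $(2,0)$-part vanishes). What you really need is the $\partial$-Poincar\'e lemma on a polydisc: $\partial\bigl(\sum a_j\,dz_j\bigr)=0$ implies $\sum a_j\,dz_j=\partial f$ for some smooth $f$. This follows from the Dolbeault--Grothendieck $\overline\partial$-lemma applied to the conjugate $(0,1)$-form $\sum\overline a_j\,d\overline z_j$, and it is the correct justification for the step $a_j=\partial f/\partial z_j$. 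With that small clarification your argument stands.
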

In order to make use of the \(S^1\)-action locally, we need coordinates which contains nearly whole orbits. On the regular part, this is possible without any restrictions while on the irregular part we just find such coordinates approximately. This issue is stated in the following two lemmas.
\begin{lemma}[\cite {HL15a}, Lemma 1.17]\label{l-gue150615}
Fix $x_0\in X_{\rm reg}$. Then we can find canonical coordinates $(z,\theta,\varphi)$ centered at $x_0$ and defined on a canonical local patch $D=\{(z,\theta): |z|<\varepsilon_0, |\theta|<\pi\}$.
\end{lemma}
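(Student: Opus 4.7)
The plan is to take the canonical coordinates provided by Theorem~\ref{j}, which typically live on a thin tube $|\widetilde\theta|<\widetilde\delta$, and stretch the $\widetilde\theta$-direction all the way to $(-\pi,\pi)$ by using the $S^1$-action. The crucial fact is that $x_0\in X_{\rm reg}=X_1$ has trivial isotropy, so the orbit through $x_0$ has minimal period exactly $2\pi$; this is precisely what permits the full range $(-\pi,\pi)$.

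I would begin by applying Theorem~\ref{j} to obtain canonical coordinates $(\widetilde z,\widetilde\theta)$ on a small patch $\widetilde D=\{|\widetilde z|<\widetilde\varepsilon,\ |\widetilde\theta|<\widetilde\delta\}$, trivial at $x_0$, with $T=\partial/\partial\widetilde\theta$ and $\varphi=\varphi(\widetilde z)$ independent of $\widetilde\theta$. Let $\iota:\{|z|<\widetilde\varepsilon\}\hookrightarrow\widetilde D$ be the inclusion of the slice $\{\widetilde\theta=0\}$. For $\varepsilon_0\le\widetilde\varepsilon$ to be chosen later, I would define
\[
\Psi:\{|z|<\varepsilon_0\}\times(-\pi,\pi)\to X,\qquad\Psi(z,\theta):=e^{i\theta}\circ\iota(z),
\]
and declare $(z,\theta)$, together with the same $\varphi(z)$, to be the new coordinates on $D:=\Psi(\{|z|<\varepsilon_0\}\times(-\pi,\pi))$.

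Next I would verify three properties. First, $\Psi$ is a local diffeomorphism: its differential on the slice $\theta=0$ sends $\partial/\partial\theta$ to $T$ by~\eqref{Eq: IndVec} and sends the $\partial/\partial z_j$'s to $d\iota(\partial/\partial\widetilde z_j)$, which together with $T$ spans $T_{\iota(z)}X$ by transversality of the $S^1$-action; the $S^1$-equivariance $\Psi(z,\theta+s)=e^{is}\circ\Psi(z,\theta)$ then propagates bijectivity of the differential to every $\theta$. Second, $\Psi$ is globally injective for $\varepsilon_0$ small enough: the relation $e^{i\theta_1}\circ\iota(z_1)=e^{i\theta_2}\circ\iota(z_2)$ rewrites as $e^{i(\theta_1-\theta_2)}\circ\iota(z_1)=\iota(z_2)$, and if injectivity failed one could extract a sequence with $z_j^{(n)}\to 0$ and $\theta_1^{(n)}-\theta_2^{(n)}\to\phi^\ast\in[-2\pi,2\pi]$ satisfying $e^{i\phi^\ast}\circ x_0=x_0$; since $x_0\in X_1$ this forces $\phi^\ast\in\{0,\pm 2\pi\}$. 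The case $\phi^\ast=0$ contradicts the injectivity of the original chart near $x_0$, and in the cases $\phi^\ast=\pm 2\pi$ one writes $\phi^{(n)}=\pm 2\pi\mp\delta^{(n)}$ with $\delta^{(n)}\to 0^+$ and uses $e^{i\phi^{(n)}}=e^{\mp i\delta^{(n)}}$ on $X$ to reduce to the first case. Third, the canonical form~\eqref{e-can} persists on $D$ with the same $\varphi$: the pushforward of the frame $\{Z_j\}$ from the slice along $e^{i\theta}$ reads $\partial/\partial z_j+i(\partial\varphi/\partial z_j)\partial/\partial\theta$ in the new coordinates, because $de^{i\theta}(T)=T$ and $\varphi$ is $\theta$-independent; that the pushforward remains a $T^{1,0}X$-frame uses the CR property of the action.

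The main obstacle is the global injectivity step, which is where $x_0\in X_{\rm reg}$ enters essentially: the conclusion that $\phi^\ast$ must lie in $\{0,\pm 2\pi\}$ relies on the minimal period being $2\pi$. On $X_k$ with $k\ge 2$ the analogous construction would yield only $|\theta|<\pi/k$, reflecting the shorter orbit period, which is precisely the reason a separate (and weaker) statement is needed away from the regular stratum.
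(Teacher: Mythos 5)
Your construction is correct and it is the natural route to the statement: apply the Baouendi--Rothschild--Treves chart on a thin tube, then use the $S^1$-flow to extend the $\theta$-range, with local invertibility propagated by equivariance, global injectivity from the trivial isotropy at $x_0\in X_{\rm reg}$, and the BRT frame form carried along by $T$-rigidity plus the CR property of the action. This matches the approach the cited source (Lemma~1.17 of \cite{HL15a}) takes; the paper itself just quotes that lemma without reproducing a proof.

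One small inaccuracy worth noting: in verifying that $d\Psi$ is an isomorphism on the slice, the fact that $d\iota(\partial/\partial \widetilde z_j)$ together with $T$ spans $T_{\iota(z)}X$ is simply because $(\widetilde z,\widetilde\theta)$ is a coordinate system with $T=\partial/\partial\widetilde\theta$; it is not a direct appeal to transversality of the $S^1$-action (transversality was already consumed in producing the BRT chart). This does not affect the validity of your argument.
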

\begin{lemma}[\cite{HL15a}, Lemma 1.18 ]\label{l-gue150616}
Let $x_0\in X_k$ be a point, where $k\in\mathbb N$, $k>1$. For every $\epsilon>0$, $\epsilon$ small, we can find canonical coordinates $(z,\theta,\varphi)$ centered at $x_0$ and defined on a canonical local patch $D_\epsilon=\{(z,\theta): |z|<\varepsilon_0, |\theta|<\frac{\pi}{k}-\epsilon\}$.
\end{lemma}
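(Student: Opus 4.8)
The plan is to build the enlarged chart by \emph{unrolling} a small Baouendi--Rothschild--Treves chart along the $S^1$-orbit of $x_0$. First I would apply Theorem~\ref{j} to get canonical coordinates $(z,\theta,\varphi)$, trivial at $x_0$, on a possibly tiny chart $D_0=\{|z|<\varepsilon_1,\ |\theta|<\delta_1\}$: there $T=\partial/\partial\theta$, the $Z_j=\partial/\partial z_j+i(\partial\varphi/\partial z_j)\partial/\partial\theta$ form a frame of $T^{1,0}X$, and $\varphi=\varphi(z)$ is $\theta$-independent with $\varphi(z)=\sum_j\lambda_j|z_j|^2+O(|z|^3)$, the $\lambda_j$ being the Levi eigenvalues at $x_0$. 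Writing $y(z):=(z,0)\in D_0$, the map $z\mapsto y(z)$ embeds $\{|z|<\varepsilon_1\}$ transversally to the orbit direction $T$, with $y(0)=x_0$, and inside $D_0$ the action is $e^{is}\circ(z,\theta)=(z,\theta+s)$ (as long as both sides stay in $D_0$) since $T=\partial/\partial\theta$. Then, for a small $\varepsilon_0>0$ to be chosen in terms of $\epsilon$, I would consider
\[
\Psi(z,\theta)=e^{i\theta}\circ y(z),\qquad |z|<\varepsilon_0,\ \ |\theta|<\tfrac{\pi}{k}-\epsilon,
\]
and show that $\Psi$ is a diffeomorphism onto an open subset of $X$; pulling the coordinates $(z,\theta)$ back along $\Psi$ and retaining the same $\varphi(z)$ would then produce the chart $D_\epsilon$, which I claim is canonical and trivial at $x_0$.

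Two of the three points to check are routine. That $\Psi$ is a local diffeomorphism: from $\Psi(z,\theta)=e^{i\theta}\circ y(z)$ and \eqref{a} (in particular $de^{i\theta}T(x)=T(e^{i\theta}x)$) one gets $d\Psi_{(z,\theta)}=de^{i\theta}\circ d\Psi_{(z,0)}$, and $d\Psi_{(z,0)}$ --- which sends $\partial/\partial\theta\mapsto T(y(z))$ and $\partial/\partial z_j\mapsto\partial_{z_j}y(z)$ --- is an isomorphism because the slice $y$ is transverse to $T$; since $de^{i\theta}$ is invertible, $d\Psi$ is everywhere invertible. That the pulled-back coordinates are canonical and trivial at $x_0$: $\Psi(z,\theta+s)=e^{is}\circ\Psi(z,\theta)$ gives $T=\partial/\partial\theta$; the pushforward by $\Psi$ of $\partial/\partial z_j+i(\partial\varphi/\partial z_j)\partial/\partial\theta$ equals $de^{i\theta}\bigl(Z_j(y(z))\bigr)$, again by \eqref{a}, hence lies in $T^{1,0}X$ and forms a frame there; and $\varphi=\varphi(z)$ keeps the $2$-jet above while $(z(x_0),\theta(x_0))=(0,0)$. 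So $(z,\theta,\varphi)$ is canonical in the sense of Theorem~\ref{j} and trivial at $x_0$ --- provided $\Psi$ is injective.

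Injectivity of $\Psi$ on $\{|z|<\varepsilon_0\}\times\{|\theta|<\tfrac{\pi}{k}-\epsilon\}$ is the heart of the matter and the step I expect to be the main obstacle. It rests on two observations. First, on the central arc: $\Psi(0,\theta)=e^{i\theta}\circ x_0$, and $\Psi(0,\theta_1)=\Psi(0,\theta_2)$ forces $e^{i(\theta_1-\theta_2)}\circ x_0=x_0$ with $|\theta_1-\theta_2|<\tfrac{2\pi}{k}-2\epsilon<\tfrac{2\pi}{k}$, hence $\theta_1=\theta_2$ precisely because $x_0\in X_k$ has no return time in $(0,\tfrac{2\pi}{k})$; so $\Psi$ is injective on the compact arc $\{0\}\times[-(\tfrac{\pi}{k}-\epsilon),\tfrac{\pi}{k}-\epsilon]$. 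Second, the general fact that a local diffeomorphism which is injective on a compact set is injective on some neighborhood of it --- proved by a standard contradiction-and-subsequence argument --- then yields an $\varepsilon_0=\varepsilon_0(\epsilon)>0$ with $\Psi$ injective on $D_\epsilon$. (Unwinding that argument here: equal images $\Psi(z_1^{(n)},\theta_1^{(n)})=\Psi(z_2^{(n)},\theta_2^{(n)})$ with $z_i^{(n)}\to0$ and distinct preimages would force, after passing to a subsequence and using $x_0\in X_k$ as above, $\theta_1^{(n)}-\theta_2^{(n)}\to0$; for large $n$ both points then lie in $D_0$, where $\Psi$ is the identity in $(z,\theta)$-coordinates --- a contradiction.) This is exactly where the hypothesis $x_0\in X_k$ enters, and it pins down the sharp bound $\tfrac{\pi}{k}$: already $e^{i\pi/k}\circ x_0=e^{-i\pi/k}\circ x_0$, so injectivity fails as soon as $|\theta|$ is allowed to reach $\tfrac{\pi}{k}$, while the $-\epsilon$ leaves the uniform room needed to pick $\varepsilon_0$. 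Everything outside this injectivity step is merely the BRT normal form of Theorem~\ref{j} propagated along the orbit by the $S^1$-action through \eqref{a}.
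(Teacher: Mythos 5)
Your approach is correct, and since the paper only cites this lemma from \cite{HL15a} without reproducing the proof, I will assess it on its own merits: the ``unroll a small BRT chart along the orbit'' construction is the standard one and is surely what \cite{HL15a} does as well (Lemma~\ref{l-gue150615} is the $k=1$ instance of the same argument). The local-diffeomorphism computation via $d\Psi_{(z,\theta)}=de^{i\theta}\circ d\Psi_{(z,0)}$, the verification that the pulled-back frame is again of the BRT form with the same $\varphi$, the injectivity on the closed central arc from $x_0\in X_k$, and the passage from compact-set injectivity to neighborhood injectivity are all in order, and the bound $\tfrac{\pi}{k}-\epsilon$ is correctly accounted for.

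One small slip in the ``unwinding'' of the compactness argument: after extracting a subsequence so that $\theta_1^{(n)},\theta_2^{(n)}\to\theta^*$ with $\theta_1^*=\theta_2^*=\theta^*$ and $z_i^{(n)}\to 0$, you conclude ``for large $n$ both points then lie in $D_0$''. That is not right unless $|\theta^*|<\delta_1$, and $\theta^*$ can be anywhere in $[-(\tfrac{\pi}{k}-\epsilon),\tfrac{\pi}{k}-\epsilon]$, while $D_0$ is a possibly tiny chart. The correct conclusion is that both points eventually lie in a ball around $(0,\theta^*)$ on which $\Psi$ is injective, because $\Psi$ is a local diffeomorphism there; equivalently, translate by $e^{-i\theta_1^{(n)}}$ so that both images and preimages land near $x_0$ and the identity-in-coordinates argument inside $D_0$ applies. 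This is only an exposition issue; the structure and all the essential ideas of the proof are correct.
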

\begin{lemma}[\cite{HL15a}, Lemma 1.19]\label{b5}
Fix $x_0\in X$. Let $(z, \theta, \varphi)$ be canonical coordinates centered at $x_0$ and defined on a canonical chart $D=\tilde D\times (-\delta, \delta).$  We denote by $dv_X$ the volume form associated with the rigid Hermitian metric.
Then on $D$ one has
$dv_X=\lambda(z)dv(z)d\theta$
with $\lambda(z)\in C^\infty(\tilde D, \mathbb R)$ which does not depend on $\theta$ and $dv(z)=2^{n-1}dx_1\cdots dx_{2n-2}$.
\end{lemma}

\section{Szeg\H{o}  kernel expansion}\label{sec1}

\subsection{Some standard notations}
First, we introduce some standard notations and definitions.
We shall use the following notations: $\mathbb N_0=\mathbb N\cup\{0\}.$ An element $\alpha=(\alpha_1, \cdots, \alpha_n)\in \mathbb N_0^n$ will be called a multiindex and the size of $\alpha$ is $|\alpha|=\alpha_1+\cdots+\alpha_n.$ We write $x^{\alpha}=x_1^{\alpha_1}\cdots x_n^{\alpha_n}$, $x=(x_1, \cdots, x_n),$ $\partial_x^\alpha=\partial_{x_1}^{\alpha_1}\cdots\partial_{x_n}^{\alpha_n}$,
$\partial_x^{\alpha}=\frac{\partial^{|\alpha|}}{\partial x^\alpha}$. Let $z=(z_1, \cdots, z_n)$, $z_j=x_{2j-1}+i x_{2j}$, $j=1, \cdots, n$ be the coordinates of $\mathbb C^n$. We write $z^\alpha=z_1^{\alpha_1}\cdots z_n^{\alpha_n}, $ $\overline z^{\alpha}=\overline z_1^{\alpha_1}\cdots \overline z_n^{\alpha_n}$, $\frac{\partial^{|\alpha|}}{\partial z^\alpha}=\partial_{z_1}^{\alpha_1}\cdots\partial_{z_n}^{\alpha_n}$, $\frac{\partial ^{|\alpha|}}{\partial\overline z^\alpha}=\partial^{\alpha_1}_{\overline z_1}\cdots\partial^{\alpha_n}_{\overline z_n}.$

In this section, we will study the semi-classical asymptotic expansion of the Szeg\H{o}  kernel of positive Fourier components. We recall some notations in semi-classical analysis.

\begin{definition}\label{g1}
Let $W$ be an open subset of $\mathbb R^N$.  Let $S(1; W)=S(1)$ be the set of $a\in C^\infty(W)$ such that for every $\alpha\in \mathbb N_0^N$, there exists constant $C_{\alpha}$ such that $|\partial^\alpha_x a(x)|\leq C_\alpha$ on $W$.  If $a=a(x, k)$ depends on $k\in (1, \infty)$, we say that $a(x, k)\in S_{\rm loc}(1; W)=S_{\rm loc}(1)$ if $\chi(x)a(x, k)$ is uniformly bounded in $S(1)$ when $k$ varies in $(1, \infty)$ for every $\chi(x)\in C_0^\infty (W).$ For $m\in\mathbb R$, we put $S^m_{\rm loc}(1; W)=S^m_{\rm loc }(1)=k^m S_{\rm loc}(1)$. If $a\in S^{m_0}_{\rm loc}(1)$, $a_j\in S^{m_j}_{\rm loc}(1), m_j\searrow-\infty$, we say that $a\thicksim\sum_{j=0}^{\infty}a_j$ in $S^{m_0}_{\rm loc}(1)$ if $a-\sum_{j=0}^{N_0}a_j\in S^{m_{N_0+1}}_{\rm loc}(1)$ for every $N_0$.
\end{definition}


Let $W_1, W_2$ be two open subsets of $\mathbb R^N.$ If $A: C_0^\infty(W_1)\rightarrow\mathcal D^\prime(W_2)$ is continuous, by the Schwartz kernel theorem (Theorem 5.2.1 in \cite{Ho03}) we write $K_A(x, y)$ or $A(x, y)$ to denote the distribution kernel of $A$. The following two statements are equivalent

(a) $A$ can be extended to an continuous operator : $\mathcal E^\prime (W_1)\rightarrow C^\infty(W_2)$,

(b) $A(x, y)\in C^\infty(W_1\times W_2)$.\\
If $A$ satisfies (a) or (b), we say that $A$ is smoothing.


A $k$-dependent continuous operator $A_k: C_0^\infty(W_1)\rightarrow\mathcal D^{\prime}(W_2)$ is called $k$-negligible  if $A_k$ is smoothing and the kernel $A_k(x, y)$ of $A_k$ satisfies
$|\partial^\alpha_x\partial^\beta_y A_k(x, y)|=O(k^{-m})$ locally uniformly on every compact set in $W_1\times W_2$, for all multi-indices $\alpha, \beta\in\mathbb N_0^N$ and all $m\in\mathbb N_0$.  Let $C_k: C_0^\infty(W_1)\rightarrow\mathcal D^\prime(W_2)$ be another $k$-dependent continuous operator. We write $A_k\equiv C_k\mod O(k^{-\infty})$  or $A_k(x, y)\equiv C_k(x, y)\mod O(k^{-\infty})$ if $A_k-C_k$ is $k$-negligible. We write $A_k=C_k+O(k^{-\infty})$ if $A_k\equiv C_k\mod O(k^{-\infty})$. Similarly, we write $B_k(x)\equiv 0 \mod O(k^{-\infty})$ for any $k$-dependent smooth function $B_k(x)\in C^\infty(W)$ if $|\partial_x^\alpha B_k(x)|=O(k^{-m})$   locally uniformly on every compact subset of $W$ for all $\alpha\in \mathbb N_0^N$ and all $m\in\mathbb N_0$.

\subsection{Asymptotic Szeg\H{o}  kernel expansion}\label{subsec:SKDef}


Let $(\,\cdot\,|\,\cdot\,)$ be the inner product on $\Omega^{0,0}(X)$ induced by $dv_X$. Let $L^2(X)$ (resp.~$L^2_m(X)$) be the completions of $\Omega^{0,0}(X)$ (resp.~$\Omega^{0,0}_m(X)$) with respect to $(\,\cdot\,|\,\cdot\,)$. By elementary Fourier analysis one has $L^2_m(X)\perp L^2_{m^\prime}(X)$ for $m\neq m^\prime, m, m^\prime\in\mathbb Z.$ For $m\in\mathbb Z$, let $Q_m: L^2(X)\rightarrow L^2_m(X)$ be the orthogonal projection with respect to $(\,\cdot\,|\,\cdot\,)$.

From now on we assume $m\in\mathbb N$. Let $S_m: L^2(X)\rightarrow H^0_{b, m}(X)$ be the orthogonal projection with respect to $(\,\cdot\,|\,\cdot\,)$. We call $S_m$ the $m$-th Szeg\H{o}  projection. From (\ref{a1}), one finds ${\rm dim}H^0_{b, m}(X)<\infty$. Let $\{f_j\}_{j=1}^{d_m}$ be an orthonormal basis of $H^0_{b, m}(X)$. Then the $m$-th Szeg\H{o}  kernel function is given by $S_m(x)=\sum_{j=1}^{d_m}|f_j(x)|^2.$ Let $S_m(x, y)$ be the distribution kernel with respect to the operator $S_m$ which is given by $S_m(x, y)=\sum_{j=1}^{d_m}f_j(x)\overline{f_j(y)}.$ The goal of this section is to study the semi-classical asymptotic expansion of $S_m(x, y)$.

We extend $\overline\partial_b$ to $L^2(X)$ in the sense of distribution and denote its kernel by ${\rm Ker}(\overline\partial_b)=\{u\in L^2(X): \overline\partial_bu=0\}$ which is a closed subspace of $L^2(X)$. Let $S: L^2(X)\rightarrow{\rm Ker}(\overline\partial_b)$ be the usual Szeg\H{o}  projection. We denote by $S(x, y)$ the distribution kernel of the Szeg\H{o} projection.

\begin{lemma}\label{a2}
With the notations above, we have
\begin{equation}\label{f2}
H^0_{b, m}(X)={\rm Ker}(\overline\partial_b)\cap L_m^2(X)
\end{equation} and
\begin{equation}\label{f3}
S_m u=SQ_m u=Q_mSu, ~\forall u\in C^\infty(X).
\end{equation}
\end{lemma}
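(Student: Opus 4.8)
The plan is to prove the two identities in Lemma~\ref{a2} by exploiting the commutation of the $S^1$ action with $\ddbar_b$ together with elementary Fourier-analytic decomposition of $L^2(X)$.

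\textbf{Step 1: the identity \eqref{f2}.} First I would unwind the definitions. For $u\in\Omega^{0,0}_m(X)$ the condition $\ddbar_bu=0$ is precisely the condition defining $\mathrm{Ker}\,\ddbar_b:\Omega^{0,0}_m(X)\to\Omega^{0,1}_m(X)$, and since in degree $0$ there is nothing to mod out, $H^0_{b,m}(X)=\{u\in\Omega^{0,0}_m(X):\ddbar_bu=0\}$. The inclusion $H^0_{b,m}(X)\subset\mathrm{Ker}(\ddbar_b)\cap L^2_m(X)$ is then immediate. For the reverse inclusion, take $u\in\mathrm{Ker}(\ddbar_b)\cap L^2_m(X)$; the point is to upgrade the weak CR condition to smoothness. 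Since $X$ is strongly pseudoconvex and $Tu=imu$, the operator $\ddbar_b+\ddbar_b^{*}+|T|$ (or the relevant subelliptic operator / the CR hypoellipticity of $\square_b$ away from the critical degree, cf.\ the machinery of \cite{HL15a}) gives interior elliptic/subelliptic regularity, so $u\in C^\infty(X)$ and hence $u\in\Omega^{0,0}_m(X)$ with $\ddbar_bu=0$, i.e.\ $u\in H^0_{b,m}(X)$. One should state this regularity input explicitly, since it is the only nontrivial ingredient here.

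\textbf{Step 2: the identity \eqref{f3}.} I would argue purely on $L^2$ using orthogonal projections. The key structural facts are: $L^2(X)=\widehat{\bigoplus}_{m\in\mathbb Z}L^2_m(X)$ is the Fourier decomposition under the $S^1$ action, so the $Q_m$ are mutually orthogonal projections summing to the identity; $S$ commutes with $Q_m$ because $\ddbar_b$ commutes with $T$ (equation \eqref{c}), hence $\mathrm{Ker}(\ddbar_b)$ is $S^1$-invariant and decomposes as $\widehat{\bigoplus}_m\big(\mathrm{Ker}(\ddbar_b)\cap L^2_m(X)\big)$. From $SQ_m=Q_mS$ one gets that $SQ_m$ is the orthogonal projection onto $\mathrm{Ker}(\ddbar_b)\cap L^2_m(X)$, which by Step~1 equals $H^0_{b,m}(X)$; thus $SQ_m=S_m=Q_mS$. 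To be careful about the commutation $SQ_m=Q_mS$, I would verify it on the dense subspace $\Omega^{0,0}(X)$: for $u\in\Omega^{0,0}(X)$, $Q_mu=\frac{1}{2\pi}\int_0^{2\pi}e^{-im\theta}u(e^{i\theta}\circ\,\cdot\,)\,d\theta$, and since $S$ commutes with the unitary action $u\mapsto u(e^{i\theta}\circ\,\cdot\,)$ (because $\ddbar_b$ is equivariant, again \eqref{a}, \eqref{b}, \eqref{c}), one can pull $S$ through the integral. Then extend by continuity to all of $L^2(X)$, which also covers the stated $u\in C^\infty(X)$.

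\textbf{Main obstacle.} The only genuinely delicate point is the regularity step inside \eqref{f2}: showing that an a priori merely $L^2$ element of $\mathrm{Ker}(\ddbar_b)$ that is also an eigenfunction of $T$ is automatically smooth. For functions (degree $0$) on a strongly pseudoconvex CR manifold this is standard subelliptic regularity for $\ddbar_b$, but it must be invoked rather than reproved; everything else (the Fourier decomposition, equivariance of $S$, the projection identities) is formal Hilbert-space bookkeeping built on \eqref{c} and the transversality of the action. I would therefore present Step~1 carefully with a precise citation for the hypoellipticity and keep Step~2 short.
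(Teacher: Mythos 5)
Your proof is correct. Step 1 matches the paper's own argument: the inclusion $H^0_{b,m}(X)\subset\mathrm{Ker}(\ddbar_b)\cap L^2_m(X)$ is immediate, and the reverse inclusion is exactly the regularity statement obtained from the subelliptic estimate $\norm{u}_s\leq C_{s,m}(\norm{\ddbar_b u}_{s-1}+\norm{u})$ for $u\in H^s(X)\cap L^2_m(X)$ (Theorem~1.12 of \cite{HL15}); your appeal to subelliptic hypoellipticity on the $m$-th Fourier band is the same input. Step 2 is where you diverge slightly: the paper never states the operator identity $SQ_m=Q_mS$ as such, but instead verifies $S_mu=SQ_mu$ directly by computing $(S_mu\,|\,v)=(u\,|\,v)=(Q_mu\,|\,v)=(SQ_mu\,|\,v)$ for $v\in H^0_{b,m}(X)$ and noting both sides pair to zero against $H^0_{b,m}(X)^\perp$. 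You instead prove the commutation $SQ_m=Q_mS$ first, using that $\ddbar_b$ is $S^1$-equivariant so $S$ commutes with the unitary translations $u\mapsto u(e^{i\theta}\circ\cdot)$, and then read off $S_m=SQ_m=Q_mS$ because the product of two commuting orthogonal projections is the orthogonal projection onto the intersection of their ranges, which by Step~1 is $H^0_{b,m}(X)$. Your version is arguably cleaner: the paper's test-vector argument quietly uses the same equivariance (to know $SQ_mu\in H^0_{b,m}(X)$, i.e.~that $S$ preserves $L^2_m(X)$), whereas you make this the explicit driving fact. Both routes are short, correct, and rest on the same two ingredients, namely equation~\eqref{c} and the subelliptic estimate.
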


\begin{proof}
It is obvious that $H^0_{b, m}(X)\subset {\rm Ker}(\overline\partial_b)\cap L^2_m(X).$  The converse is a direct corollary from following subelliptic estimate (see theorem 1.12 in \cite{HL15})
\begin{equation}\label{f4}
\|u\|_s\leq C_{s, m}(\|\overline\partial_b u\|_{s-1}+\|u\|), \forall u\in H^s(X)\cap L^2_m(X), s\geq 1,
\end{equation}
where $H^s(X)$ is the usual Sobolev space on $X$, $\|u\|_s$ is the usual Sobolev norm of order $s$ and $C_{s, m}$ is a constant.

For any $u\in C^\infty(X)$, write $u=u_1+u_2$, $u_1\in H^0_{b, m}(X), u_2\in {H^0_{b, m}(X)}^\perp$. For any $v\in H^0_{b, m}(X)$, we have
\begin{equation*}
(S_mu|v)=(u_1|v)=(u|v)=(Q_mu|v)=(SQ_mu|v).
\end{equation*}
For any $v\in L^2(X)\bigcap H^0_{b,m}(X)^\perp$, we have
\[(S_mu|v)=0=(SQ_mu|v)\]
since $S_mu, SQ_mu\in H^0_{b,m}(X)$.
This implies  $S_mu=SQ_mu$ for all $u\in C^\infty(X)$. Similarly, we have $S_mu=Q_m Su$ for all $u\in C^\infty(X)$.
\end{proof}

Fix $x_0\in X$. Let $(z, \theta, \varphi)$ be canonical coordinates centered at $x_0$ and defined on a canonical local patch
$D_1=\{(z, \theta): |z|<\varepsilon_1, |\theta|<\delta_1\}$. Choose $D=\{(z, \theta): |z|<\varepsilon, |\theta|<\delta\}\Subset D_1.$ 

Choose two cut-off functions $\chi, \chi_1\in C_0^\infty(D_1)$ such that $\chi=1$ in some small neighborhood of $\overline D$ and $\chi_1=1$ in some small neighborhood of ${\rm supp}\chi$. By Lemma \ref{a2}, we have $S_m =SQ_m $ and hence
\begin{equation*}
\chi S_m=\chi SQ_m=\chi S\chi_1Q_m +\chi S(1-\chi_1)Q_m.
\end{equation*}
We write $F=\chi S(1-\chi_1)$ and $F_m=\chi S(1-\chi_1)Q_m$ and denote by $F(x, y)$ and $F_m(x, y)$ the distribution kernels of $F$ and $F_m$, respectively. We will show
\begin{lemma}\label{f5}
$F_m: C_0^\infty (D)\rightarrow \mathcal E^\prime(D_1)$ is $m$-negligible.
\end{lemma}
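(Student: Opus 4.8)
The claim is that $F_m = \chi S(1-\chi_1)Q_m$ is $m$-negligible as an operator $C_0^\infty(D)\to\mathcal E'(D_1)$. The plan is to exploit the separation of supports of $\chi$ and $1-\chi_1$ together with the description of the full Szeg\"o kernel $S(x,y)$ as a Fourier integral operator with a phase whose imaginary part is positive away from the diagonal (Boutet de Monvel--Sj\"ostrand \cite{BS76}). First I would recall that $S(x,y)$ is smooth off the diagonal; since ${\rm supp}\,\chi$ and ${\rm supp}(1-\chi_1)$ are disjoint (because $\chi_1=1$ near ${\rm supp}\,\chi$), the kernel $F(x,y)=\chi(x)S(x,y)(1-\chi_1(y))$ is in $C^\infty(D_1\times X)$; in particular $F$ is smoothing and its kernel is supported in a region where $x$ and $y$ stay a fixed positive distance apart.

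Next I would write $F_m(x,y) = \int F(x,y)\,\overline{Q_m(y,w)}$-style, or more precisely use $F_m = FQ_m$ and the fact that $Q_m$ is the $m$-th Fourier component projector: $Q_m u(x) = \frac{1}{2\pi}\int_0^{2\pi} e^{-im\theta}(e^{i\theta})^*u(x)\,d\theta$. Hence
\begin{equation*}
F_m(x,y) = \frac{1}{2\pi}\int_0^{2\pi} e^{-im\theta} F(x, e^{i\theta}\circ y)\,d\theta,
\end{equation*}
since $F$ commutes with the averaging in the appropriate sense (the $S^1$-action acts on the $y$ variable). Because $F(x,y)$ is smooth in both variables on the relevant set and the $S^1$-action is smooth, the integrand is a smooth function of $(x,y,\theta)$, and all $x$- and $y$-derivatives of $F(x,e^{i\theta}\circ y)$ are uniformly bounded on compact subsets of $D\times D_1$, uniformly in $\theta$. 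Then integration by parts in $\theta$ (using $e^{-im\theta} = \frac{i}{m}\frac{d}{d\theta}e^{-im\theta}$ repeatedly, and that the boundary terms cancel by $2\pi$-periodicity) gains a factor $m^{-1}$ at each step, so $\partial_x^\alpha\partial_y^\beta F_m(x,y) = O(m^{-N})$ locally uniformly for every $N$ and every $\alpha,\beta$. This is exactly the statement that $F_m$ is $m$-negligible.

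The one point requiring a little care, and the main (mild) obstacle, is justifying the identity $F_m(x,y)=\frac{1}{2\pi}\int_0^{2\pi} e^{-im\theta}F(x,e^{i\theta}\circ y)\,d\theta$ at the level of distribution kernels — i.e.\ that $Q_m$ applied on the right of the smoothing operator $F$ really amounts to this averaging against $F(x,\cdot)$. This follows because $F$ has a smooth kernel, so $F(1-\chi_1)$-type compositions and the $L^2$-pairing defining $Q_m$ can be carried out pointwise; alternatively one notes $Q_m$ is $\theta$-averaging and $S$ is $S^1$-invariant, but here it is cleanest to just use that $F$ is an honest smoothing operator with compactly supported smooth kernel and push the $\theta$-integral inside. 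Once this representation is in place, the estimate is the routine non-stationary phase / repeated integration-by-parts argument sketched above, and there is no issue with the $S^1$-action being only locally free since we never leave the smooth-kernel regime.
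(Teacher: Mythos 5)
Your argument is correct and follows essentially the same route as the paper: $F$ has a smooth kernel because $\chi$ and $1-\chi_1$ have disjoint supports (Boutet de Monvel--Sj\"ostrand), and iterated integration by parts in the $S^1$-direction then produces $O(m^{-\infty})$ decay with all derivatives; the paper carries this out locally using a partition of unity and the canonical coordinate $y_{2n-1}$, whereas you do it globally via the $\theta$-average, a minor tidying of the same idea. One small sign correction: the kernel representation should read
\begin{equation*}
F_m(x,y)=\frac{1}{2\pi}\int_0^{2\pi}e^{im\theta}F(x,e^{i\theta}\circ y)\,d\theta
=\frac{1}{2\pi}\int_0^{2\pi}e^{-im\theta}F(x,e^{-i\theta}\circ y)\,d\theta,
\end{equation*}
not $e^{-im\theta}F(x,e^{i\theta}\circ y)$; this comes from changing variables $z\mapsto e^{-i\theta}\circ y$ in $F_m=FQ_m$ and using the $S^1$-invariance of $dv_X$, and it does not affect the integration-by-parts estimate.
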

\begin{proof}
Since $\rm supp~\chi\cap \rm supp ~(1-\chi_1)=\emptyset$, by a result of Boutet de Monvel-Sj\"ostrand \cite{BS76} (see also \cite{H08} and \cite{HM14}) we know that $F$ is smoothing.
Let $\cup_{j=1}^{n_0} U_j$ be a finite covering of $X$. We assume that all the $U_js , 1\leq j\leq n_0$ are canonical local patches. Choose a partition of unity $\{\rho_j\}_{j=1}^{n_0}$ with ${\rm supp} \rho_j\Subset U_j$,  $1\leq j\leq n_0 ,$ and $\sum^{n_0}_{j=1}\rho_j=1$ on $X$. Then for all $u\in C_0^\infty (D)$ we have
\begin{equation}\label{j3}
F_mu=FQ_mu=F\left(\sum_{j=1}^{n_0} \rho_j Q_m u\right)=\sum_{j=1}^{n_0}F(\rho_jQ_m u).
\end{equation}
For $1\leq j\leq n_0$, let $y=(w, y_{2n-1})$ be  canonical coordinates in $U_j$. Then on $U_j$ one finds
\begin{equation*}
\rho_jQ_m u=\rho_j(y)(Q_m u)(y)=\rho_j(y)\hat u_m(w)e^{im y_{2n-1}}.
\end{equation*}
Set $F_j(x, y)=F(x, y)\rho_j(y)$ for $x\in D, y\in U_j$. Then on $D$, by a direct calculation we have
\begin{equation}\label{j4}
\begin{split}
F(\rho_j Q_m u)(x)
=-\frac{1}{2\pi mi}\int_{U_j}\left(\int^{2\pi}_0\frac{\partial F_j}{\partial y_{2n-1}}(x, e^{i\theta}\circ y)e^{im\theta}d\theta\right)u(y)\lambda(w)dv(w) dy_{2n-1}.
\end{split}
\end{equation}
By (\ref{j3}), (\ref{j4}) and the induction method, we have $F_m(x, y)=O(m^{-N})$ locally uniformly for all $N\in \mathbb N$ and similarly for the derivatives. Thus, the lemma follows.
\end{proof}

Set $G=\chi S\chi_1$ and $G_m=\chi S\chi_1Q_m$. Write $D_1=\tilde D_1\times (-\delta_1, \delta_1)$ and $D=\tilde D\times (-\delta, \delta)$ with $\tilde D_1=\{z\in\mathbb C^{n-1}: |z|<\varepsilon_1\}$ and $\tilde D=\{z\in\mathbb C^{n-1}: |z|<\varepsilon\}$. Assume that on $D_1$, $\chi_1(y)=\tilde\chi_1(w)\tilde\chi_2(y_{2n-1})$ holds with $\tilde\chi_1(w)\in C_0^\infty(\tilde D_1), \tilde\chi_2(y_{2n-1})\in C_0^\infty(-\delta_1, \delta_1)$ and $\tilde\chi_1(w)=1$ in some small neighborhood of $\overline{\tilde D}$ and $\tilde \chi_2=1$ in some small neighborhood of $[-\delta,\delta]$. Let $u\in C^\infty_0(D)$ be a smooth function. On $D_1$, we write $(Q_mu)(y)=\hat u_m(w)e^{imy_{2n-1}}$, $\hat u_m(w)\in C^\infty(\tilde D_1)$. Then on $D$ we have
\begin{equation}\label{a6}
\begin{split}
G_m u(x)
&=\int_{\tilde D_1}\tilde\chi_1(w)\hat u_m(w)\lambda(w)\Bigr(\int_{-\delta_1}^{\delta_1}\chi(x)S(x, w, y_{2n-1})\tilde\chi_2(y_{2n-1})e^{im y_{2n-1}}dy_{2n-1}\Bigr)dv(w).
\end{split}
\end{equation}
In order to calculate the integral with respect to $dy_{2n-1}$ in  (\ref{a6}), we need the following  result due to Boutet de Monvel and Sj\"ostrand \cite{BS76}, \cite{ H08} and Hsiao-Marinescu \cite{HM14}.
\begin{theorem}\label{t1}
Let $X$ be a compact strongly pseudoconvex CR manifold with a transversal CR $S^1$-action. For any $x_0\in X$, let $D_1$ be the canonical local patch defined as in Theorem \ref{j} with canonical coordinates $(z, \theta, \varphi)$ centered at $x_0$. Then on $D_1\times D_1$ the distribution kernel $S(x, y)$ of the Szeg\H{o}  projection $S: L^2(X)\rightarrow {\rm Ker}(\overline\partial_b)$ satisfies
\begin{equation}\label{FIO}
S(x, y)=\int_{0}^\infty e^{i\Psi(x, y)t}b(x, y, t)dt
\end{equation}
in the sense of oscillatory integrals, where
\begin{equation}\label{j6}
\begin{split}
&\Psi(x, y)\in C^{\infty}(D_1\times D_1),
\Psi(x, y)=x_{2n-1}-y_{2n-1}+\Phi(z, w),\\
& \Phi(z, w)=-\overline\Phi(w, z), \exists~ c>0: {\rm Im}\Phi\geq c|z-w|^2, \Phi(z, w)=0\Leftrightarrow z=w, \\
&\Phi(z, w)=i(\varphi(z)+\varphi(w))-2i\sum_{|\alpha|+ |\beta|\leq N}\frac{\partial^{|\alpha|+|\beta|}\varphi}{\partial z^\alpha\partial\overline z^\beta}(0)\frac{z^\alpha}{\alpha !}\frac{\overline w^\beta}{\beta!}+O(|(z, w)|^{N+1}), \forall N\in\mathbb N_0,\\
&b(x, y, t)\sim \sum_{k=0}^\infty b_k(x, y)t^{n-1-k}~{\rm in}~ S^{n-1}_{\rm loc}(1; D_1\times D_1),\\
&b_j(x, y)\in C^\infty(D_1\times D_1), j=0, 1, \cdots,\\
&b_0(x,x)=\frac{1}{2}\pi^{-n}\abs{\det\mathcal L_x},\ \ \forall x\in D_1.
\end{split}
\end{equation}

\end{theorem}
By Theorem \ref{t1},  the integral with respect to $dy_{2n-1}$ in  (\ref{a6}) can be computed by making use of the stationary phase formula due to Melin-Sj\"orstrand \cite{MS75}. Substituting (\ref{FIO}) and (\ref{j6}) to (\ref{a6}) and changing coordinates $t=m\sigma$, where $m\in\N$ and $\sigma\in\R_{+}$, we have
\begin{equation}\label{e7}
\begin{split}
&\int_{-\delta_1}^{\delta_1}\chi(x)S(x, w, y_{2n-1})\tilde\chi_2(y_{2n-1})e^{im y_{2n-1}}dy_{2n-1}\\
=&m\int_{-\delta_1}^{\delta_1}\int_0^\infty e^{im[(x_{2n-1}-y_{2n-1})\sigma+\Phi(z, w)\sigma+y_{2n-1}]}\chi(x)b(x, y, m\sigma)\tilde\chi_2(y_{2n-1})d\sigma dy_{2n-1}.
\end{split}
\end{equation}
Set $$\tilde \Psi(x, w, y_{2n-1}, \sigma)=(x_{2n-1}-y_{2n-1})\sigma+\Phi(z, w)\sigma+y_{2n-1}.$$ Then one finds

\begin{equation*}
\frac{\partial \tilde\Psi}{\partial \sigma}=x_{2n-1}-y_{2n-1}+\Phi(z, w), \frac{\partial\tilde\Psi}{\partial y_{2n-1}}=-\sigma+1.
\end{equation*}
For any fixed $(x, w)$ the critical point of $\tilde \Psi$ is denoted by $x_c=(y_{2n-1}, \sigma)=(x_{2n-1}+\Phi(z, w), 1)$ which is the solution of the equations $\frac{\partial \tilde\Psi}{\partial \sigma}=0, \frac{\partial\tilde\Psi}{\partial y_{2n-1}}=0 $.
Moreover, the Hessian of $\tilde\Psi$ with respect to variables $(y_{2n-1}, \sigma)$  at the critical point $x_c$ is
\begin{equation*}
\left(
  \begin{array}{cc}
    \frac{\partial^2\tilde\Psi}{\partial\sigma\partial\sigma}  & \frac{\partial^2\tilde\Psi}{\partial\sigma\partial y_{2n-1}} \\
    \frac{\partial^2\tilde\Psi}{\partial y_{2n-1}\partial\sigma } & \frac{\partial^2\tilde\Psi}{\partial y_{2n-1}\partial y_{2n-1}} \\
  \end{array}
\right)\Big|_{x_c}=\left(
          \begin{array}{cc}
            0 & -1 \\
            -1 & 0 \\
          \end{array}
        \right)
\end{equation*}
which implies that $\tilde \Psi(x, w, y_{2n-1}, \sigma)$ is a non-degenerate complex valued phase function for any fixed $(x, w)$ in the sense of Melin and Sj\"ostrand \cite{MS75}. Hence, one can apply the stationary phase formula of Melin and Sj\"ostrand \cite{MS75} to carry out the $d\sigma dy_{2n-1}$ integration in (\ref{e7}):
\begin{equation}\label{a3}
\begin{split}
&m\int_{-\delta_1}^{\delta_1}\int_{0}^\infty e^{im\tilde\Psi(x, w, y_{2n-1}, \sigma)}\chi(x)b(x, y, m\sigma)\tilde\chi_2(y_{2n-1})d\sigma dy_{2n-1}\\
=&m\int_{-\delta_1}^{\delta_1}\int e^{im\tilde\Psi}\tau(\sigma)\chi(x)b(x, y, m\sigma)\tilde\chi_2(y_{2n-1})d\sigma dy_{2n-1}\\
+&m\int_{-\delta_1}^{\delta_1}\int e^{im\tilde\Psi}(1-\tau(\sigma))\chi(x)b(x, y, m\sigma)\tilde\chi_2(y_{2n-1})d\sigma dy_{2n-1},
\end{split}
\end{equation}
where $\tau(\sigma)\in C^\infty_0(\mathbb R)$ with ${\rm supp}\tau\Subset (\frac12, \frac32)$ and $\tau=1$ near $\sigma=1$.

First we show that on $D_1\times\tilde D_1$,  the second term on the right-hand side of (\ref{a3}) satisfies the following identity
\begin{equation}\label{a5}
m\int_{-\delta_1}^{\delta_1}\int e^{im\tilde\Psi(x, w, y_{2n-1}, \sigma)}(1-\tau(\sigma))\chi(x)b(x, y, m\sigma)\tilde\chi_2(y_{2n-1})d\sigma dy_{2n-1}\equiv 0 \mod O(m^{-\infty}).
\end{equation}
This is a direct corollary of the following formula
\begin{equation*}
e^{im\tilde\Psi}=\frac{1}{im(1-\sigma)}\frac{\partial}{\partial y_{2n-1}} e^{im\tilde\Psi}
\end{equation*}
and the integration by parts with respect to  the variable $y_{2n-1}$. For convenience we denote by $H_m(x, w)$ the left-hand side of (\ref{a5}).

Making use of Melin-Sj\"ostrand's stationary phase formula \cite{MS75}, the first term on the right-hand side of (\ref{a3}) becomes
\begin{equation}\label{a4}
\begin{split}
&m\int_{-\delta_1}^{\delta_1}\int e^{im\tilde\Psi}\tau(\sigma)\chi(x)b(x, y, m\sigma)\tilde\chi_2(y_{2n-1})d\sigma dy_{2n-1}\\
\equiv &e^{im(x_{2n-1}+\Phi(z, w))}\chi(x)\hat b(x, w, m)\mod O(m^{-\infty}),
\end{split}
\end{equation}
where
\begin{equation}\label{e-gue151124}
\begin{split}
&\hat b(x, w, m)\sim \sum_{j=0}^\infty \hat b_j(x, w)m^{n-1-j}~\text{in}~S^{n-1}_{\rm loc}(1; D_1\times\tilde D_1), \\
&\hat b_j(x, w)\in C^\infty(D_1\times\tilde D_1), j=0, 1, 2,\cdots.
\end{split}
\end{equation}
In particular, one has
\begin{equation}\label{e-gue151124I}
\begin{split}
\hat b_0(x, w)=(2\pi) \tilde b_0(x, w, x_{2n-1}+\Phi(z, w)),~
\hat b_0(x,z)=\pi^{1-n}\abs{{\rm det}\mathcal{L}_x},
\end{split}
\end{equation}
where $\tilde b_0$ denotes an almost analytic extension of $b_0$, that is $\tilde b_0(\Td x,\Td y)\in C^\infty(U_1\times U_1)$ with $\tilde b_0|_{D_1\times D_1}=b_0$ and $\abs{\ddbar_{\Td x}\tilde b_0(\Td x,\Td y)}+\abs{\ddbar_{\Td y}\tilde b_0(\Td x,\Td y)}\leq C_N(\abs{{\rm Im\,}\Td x}^N+\abs{{\rm Im\,}\Td y}^N)$, for every $N>0$ where $C_N>0$ is a constant. Here $U_1$ is an open set in $\Complex^{2n-1}$ with $U_1\bigcap\Real^{2n-1}=D_1$ (we identify $D_1$ with an open set in $\Real^{2n-1}$) and $\Td x$, $\Td y$ are complex coordinates of $\Complex^{2n-1}$.
Substituting (\ref{a5}) and (\ref{a4}) to (\ref{a6}) one has
\begin{equation}\label{h1}
\begin{split}
G_mu=&\int_{\tilde D_1}\tilde\chi_1(w)\hat u_m(w)e^{im(x_{2n-1}+\Phi(z, w))}\chi(x)\hat b(x, w, m)\lambda(w)dv(w)\\
+&\int_{\tilde D_1}\tilde\chi_1(w)\hat u_m(w)H_m(x, w)\lambda(w)dv(w)
\end{split}
\end{equation}
with $H_m(x, w)\equiv 0\mod O(m^{-\infty})$ on $D_1\times \tilde D_1.$

Choose $\eta(y_{2n-1})\in C_0^\infty(-\delta_1, \delta_1)$ such that $\int_{-\delta_1}^{\delta_1}\eta(y_{2n-1})dy_{2n-1}=1.$ Then the first term on the right-hand side of (\ref{h1}) is equal to
\begin{equation}\label{b1}
\begin{split}
&\int_{D_1}(Q_mu)(y)\tilde\chi_1(w)
\eta(y_{2n-1})e^{im(x_{2n-1}-y_{2n-1}+\Phi(z, w))}\chi(x)\hat b(x, w, m)\lambda(w)dw dy_{2n-1}\\
=&\chi(x)\int_{D_1}(Q_{-m} B_m)(x, y) u(y)\lambda (w)dy
=\chi(x)\int_{D}(Q_{-m} B_m)(x, y) u(y)\lambda (w)dy.
\end{split}
\end{equation}
Here, we have set
\begin{equation}\label{a9}
B_m(x, y)=e^{im(x_{2n-1}-y_{2n-1}+\Phi(z, w))}\hat b(x, w, m)\tilde\chi_1(w)\eta(y_{2n-1})
\end{equation}
and $(Q_{-m}B_m)(x,y)$ denotes that $Q_{-m}$ acts on $B_m(x,y)$ seen as a function in the variable $y$.
Combining (\ref{h1}), (\ref{b1}), (\ref{a9}) and Lemma~\ref{f5}, we have
\begin{equation*}
S_m(x, y)=\frac{1}{2\pi}\int_{-\pi}^{\pi}B_m(x, e^{i\theta}\circ y)e^{im\theta}d\theta+A_m(x, y), \forall x, y\in D\times D,
\end{equation*}
where $A_m(x, y)\equiv 0\mod O(m^{-\infty})$.
On the other hand, we have
$
S_m(x, y)=\sum_{j=1}^{d_m}f_j(x)\overline {f_j(y)}.
$
On $D$, we can write $f_j(x)=\hat f_j(z)e^{im x_{2n-1}}$ which leads to
$
S_m(x, y)=\sum_{j=1}^{d_m}\hat f_j(z)\overline {\hat f_j(w)}e^{im(x_{2n-1}-y_{2n-1})}.
$
Thus,
\begin{equation}\label{h6}
e^{-im x_{2n-1}}S_m(x, y)=\sum_{j=1}^{d_m}\hat f_j(z)\overline {\hat f_j(w)}e^{im(-y_{2n-1})}
\end{equation}
does not depend on $x_{2n-1}$. We get
\begin{equation}\label{h2}
e^{-im x_{2n-1}}S_m(x, y)=\frac{1}{2\pi}\int_{-\pi}^{\pi}e^{-im x_{2n-1}}B_m(x, e^{i\theta}\circ y)e^{im\theta}d\theta+e^{-im x_{2n-1}}A_m(x, y).
\end{equation}
Choose $\chi_0(x_{2n-1})\in C_0^\infty(-\delta, \delta)$ such that $\int_{-\delta}^{\delta}\chi_0(x_{2n-1})dx_{2n-1}=1.$  From (\ref{h6}) and (\ref{h2}) we have
\begin{equation}\label{h3}
\begin{split}
e^{-im x_{2n-1}}S_m(x, y)
=\frac{1}{2\pi}\int_{-\pi}^{\pi}\int_{-\delta}^{\delta}
&\chi_0(x_{2n-1})e^{-imx_{2n-1}}B_m(x, e^{i\theta}\circ y)e^{im\theta}dx_{2n-1}d\theta\\
&+C_m(z, y),
\end{split}
\end{equation}
where $C_m(z, y)=\int_{-\delta}^{\delta}A_m(x, y)e^{-im x_{2n-1}}\chi_0 (x_{2n-1})dx_{2n-1}$, $C_m(z,y)\equiv0\mod O(m^{-\infty})$.
Set \begin{equation}\label{h4}
\hat S_m(x, y)=e^{im x_{2n-1}}\int_{-\delta_1}^{\delta_1}\chi_0(x_{2n-1})e^{-im x_{2n-1}}B_m(x, y)dx_{2n-1}.
\end{equation}
From (\ref{a9}),(\ref{h2}), (\ref{h3}) and (\ref{h4}) we have

\begin{theorem}\label{b6}
Let $X$ be as in Theorem \ref{t1}. Consider the orthogonal projection $S_m: L^2(X)\rightarrow H^0_{b, m}(X)$. We denote by $S_m(x, y)$ the distribution kernel of $S_m$. For $x_0\in X$, let $(z, \theta, \varphi)$ be canonical coordinates centered at $x_0$ and defined on a canonical local patch $D_1=\{(z, \theta): |z|<\varepsilon_1, |\theta|<\delta_1\}$. For any $D=\{(z, \theta): |z|<\varepsilon, |\theta|<\delta\}\Subset D_1$, we have
\begin{equation*}
S_m(x, y)\equiv \frac{1}{2\pi}\int_{-\pi}^{\pi}\hat S_m(x, e^{i\theta}\circ y)e^{im\theta}d\theta\mod O(m^{-\infty})
\end{equation*}
on $D\times D$, where
\begin{equation}\label{b2}
\begin{split}
&\hat S_m(x, y)=e^{im(x_{2n-1}-y_{2n-1}+\Phi(z, w))}\hat b(z, w, m)\tilde\chi_1(w)\eta(y_{2n-1}),\\
&\Phi(z, w)=i(\varphi(z)+\varphi(w))-2i\sum_{|\alpha|+ |\beta|\leq N}\frac{\partial^{|\alpha|+|\beta|}\varphi}{\partial z^\alpha\partial\overline z^\beta}(0)\frac{z^\alpha}{\alpha !}\frac{\overline w^\beta}{\beta!}+O(|(z, w)|^{N+1}),\\
&\hat b(z, w, m)\sim \sum_{k=0}^\infty m^{n-1-k}\hat b_k(z, w)~{\rm in}~S^{n-1}_{\rm loc}(1; \tilde D\times \tilde D), \tilde D=\{z\in\mathbb C^{n-1}: |z|<\varepsilon\}, \\
&\hat b_0(z, w)=(2\pi)\int_{-\delta}^{\delta} \tilde b_0(z,x_{2n-1}, w, x_{2n-1}+\Phi(z, w))\chi_0(x_{2n-1})dx_{2n-1},\\
&\hat b_0(z, z)=\pi^{-(n-1)}\abs{\det\mathcal L_x},\ \ x=(z,0),\ \ \forall z\in\tilde D,
\end{split}
\end{equation}
and
\begin{equation*}
\begin{split}
&\hat b_j(z, w)\in C^\infty(\tilde D\times\tilde D), \forall j; \chi_0(x_{2n-1})\in C_0^\infty(-\delta, \delta),  \int_{-\delta}^{\delta}\chi_0(x_{2n-1})dx_{2n-1}=1;\\
&\chi_1(w)\in C_0^\infty(\tilde D_1), \ \ \mbox{$\chi_1=1$ in a neighborhood of $\overline{\tilde D}$};\\
&\eta(y_{2n-1})\in C_0^\infty(-\delta_1, \delta_1), \int_{-\delta_1}^{\delta_1}\eta(y_{2n-1})dy_{2n-1}=1.
\end{split}
\end{equation*}
Here, $\tilde b_0$ is as in \eqref{e-gue151124I}.
\end{theorem}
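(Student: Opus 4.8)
The plan is to assemble the statement from the computation already carried out in Lemma~\ref{a2} through \eqref{h4}; essentially all the analytic input is in place, so the proof is mainly a matter of organizing it and reading off the leading-order data. First I would invoke Lemma~\ref{a2} to write $S_m=SQ_m$ and split $\chi S_m=\chi S\chi_1Q_m+\chi S(1-\chi_1)Q_m=G_m+F_m$ with the cut-offs $\chi,\chi_1\in C_0^\infty(D_1)$. By Lemma~\ref{f5} the off-diagonal piece $F_m$ is $m$-negligible, hence contributes only to the $O(m^{-\infty})$ error and may be discarded. So everything reduces to analyzing $G_m=\chi S\chi_1Q_m$ on $D\times D$.

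For $G_m$, I would feed in $u\in C_0^\infty(D)$, write $(Q_mu)(y)=\hat u_m(w)e^{imy_{2n-1}}$, and use the Boutet de Monvel--Sj\"ostrand description of $S(x,y)$ on $D_1\times D_1$ from Theorem~\ref{t1} as an oscillatory integral $\int_0^\infty e^{i\Psi(x,y)t}b(x,y,t)\,dt$ with $\Psi=x_{2n-1}-y_{2n-1}+\Phi(z,w)$ and $b\sim\sum_k b_k t^{n-1-k}$ in $S^{n-1}_{\rm loc}(1)$. Substituting $t=m\sigma$ turns the $dy_{2n-1}\,d\sigma$ integration into an $m$-dependent oscillatory integral with phase $\tilde\Psi(x,w,y_{2n-1},\sigma)=(x_{2n-1}-y_{2n-1})\sigma+\Phi(z,w)\sigma+y_{2n-1}$, which has the unique critical point $(y_{2n-1},\sigma)=(x_{2n-1}+\Phi(z,w),1)$ with non-degenerate Hessian (the anti-diagonal $2\times2$ matrix with entries $-1$), so it is a non-degenerate complex-valued phase in the sense of Melin--Sj\"ostrand~\cite{MS75}. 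After inserting a localizer $\tau(\sigma)$ concentrated near $\sigma=1$, the $1-\tau(\sigma)$ part is $O(m^{-\infty})$ by repeated integration by parts in $y_{2n-1}$ (using $e^{im\tilde\Psi}=\frac{1}{im(1-\sigma)}\partial_{y_{2n-1}}e^{im\tilde\Psi}$), and the $\tau(\sigma)$ part is evaluated by the stationary phase formula of~\cite{MS75}, producing $e^{im(x_{2n-1}+\Phi(z,w))}\chi(x)\hat b(x,w,m)$ with $\hat b\sim\sum_j\hat b_j m^{n-1-j}$ in $S^{n-1}_{\rm loc}(1;D_1\times\tilde D_1)$ and leading term $\hat b_0(x,w)=2\pi\,\tilde b_0(x,w,x_{2n-1}+\Phi(z,w))$, where $\tilde b_0$ is an almost analytic extension of $b_0$.

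Then I would repackage this as a distribution kernel: inserting a bump $\eta(y_{2n-1})$ with $\int\eta=1$ shows that on $D\times D$ one has $S_m(x,y)\equiv\frac{1}{2\pi}\int_{-\pi}^{\pi}B_m(x,e^{i\theta}\circ y)e^{im\theta}\,d\theta\mod O(m^{-\infty})$ with $B_m$ as in \eqref{a9}. Finally I would exploit that $e^{-imx_{2n-1}}S_m(x,y)=\sum_j\hat f_j(z)\overline{\hat f_j(w)}e^{-imy_{2n-1}}$ is independent of $x_{2n-1}$; averaging against a bump $\chi_0(x_{2n-1})$ with $\int\chi_0=1$ replaces $B_m$ by $\hat S_m(x,y)=e^{imx_{2n-1}}\int\chi_0(x_{2n-1})e^{-imx_{2n-1}}B_m(x,y)\,dx_{2n-1}$, whose amplitude $\hat b(z,w,m)$ now depends on $z$ but not on $x_{2n-1}$, giving exactly the asserted formulas for $\hat S_m$ and the expansion of $\hat b$. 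The diagonal value of the leading coefficient follows by combining $b_0(x,x)=\tfrac12\pi^{-n}|\det\mathcal L_x|$ (Theorem~\ref{t1}) with $\Phi(z,z)=0$ and the defining property of $\tilde b_0$, yielding $\hat b_0(z,z)=\pi^{-(n-1)}|\det\mathcal L_x|$ at $x=(z,0)$.

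The main technical point, and the only place where genuine care is needed, is the application of the Melin--Sj\"ostrand complex stationary phase formula with the large parameter $m$: one must verify that $b(x,y,m\sigma)$ together with the cut-offs stays in the correct symbol class \emph{uniformly} in $m$, that the resulting asymptotic series lies in $S^{n-1}_{\rm loc}(1;D_1\times\tilde D_1)$, and that every remainder is $O(m^{-\infty})$ locally uniformly together with all derivatives; tracking the almost analytic extension $\tilde b_0$ through the stationary phase expansion to pin down $\hat b_0$ is the most delicate bookkeeping, but it is routine once the non-degeneracy of $\tilde\Psi$ is in hand.
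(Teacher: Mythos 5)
Your proposal is correct and reproduces the paper's own proof step by step: the decomposition $\chi S_m=\chi S\chi_1Q_m+\chi S(1-\chi_1)Q_m$ with Lemma~\ref{f5} killing the off-diagonal piece, the substitution $t=m\sigma$ into the Boutet de Monvel--Sj\"ostrand representation, the localization near $\sigma=1$ and Melin--Sj\"ostrand stationary phase, the repackaging with bumps $\eta$ and $\chi_0$ exploiting the $x_{2n-1}$-independence of $e^{-imx_{2n-1}}S_m(x,y)$, and the leading-order identification via $b_0(x,x)=\tfrac12\pi^{-n}\abs{\det\mathcal L_x}$ and $\Phi(z,z)=0$. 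The only point worth emphasizing is the one you already flag --- uniformity of the symbol estimates in $m$ when invoking Melin--Sj\"ostrand --- but your argument is the same as the paper's.
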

\subsection{Asymptotic Szeg\H{o}  kernel expansion on $X_{\rm reg}$ }\label{Sec: asymp exp on Xreg}

Recall that $X_{\rm reg}$ is the regular part of $X$, that is $X_{\rm reg}=\{x\in X: \forall e^{i\theta}\in S^1, ~\text{if}~e^{i\theta}\circ x=x,~ \text{then}~e^{i\theta}=\rm id\}$, and that we assume $X_{\rm reg}\neq\emptyset$. Fix $x_0\in X_{\rm reg}$. Let $(z, \theta, \varphi)$ be canonical coordinates centered at $x_0$ and defined on a canonical patch $D_1=\{(z, \theta): |z|<\varepsilon_1, |\theta|<\pi\}$.
Set $D=\{(z, \theta)\in\mathbb C^{n-1}\times\mathbb R: |z|<\varepsilon, |\theta|<\frac{\pi}{2}\}$ with $\varepsilon<\varepsilon_1$. From Theorem \ref{b6} it follows that
\begin{equation}\label{b3}
\begin{split}
S_m(x, y)
&\equiv e^{-im y_{2n-1}}\frac{1}{2\pi}\int_{-\pi}^{\pi}\hat S_m(x, (w, \theta))e^{im\theta}d\theta \mod O(m^{-\infty})
\end{split}
\end{equation}
holds on $D\times D$.
Substituting (\ref{b2}) to (\ref{b3}), we have
\begin{equation}\label{i1}
\begin{split}
&S_m(x, y)\equiv \frac{1}{2\pi}e^{im(x_{2n-1}-y_{2n-1}+\Phi(z, w))}\hat b(z, w, m)\mod O(m^{-\infty}),\\
&S_m(x, x)\equiv \frac{1}{2\pi}\hat b(z, z, m)\mod O(m^{-\infty}).
\end{split}
\end{equation}
Thus, from (\ref{i1}) we have

\begin{theorem}\label{c2}
Let $X$ be as in Theorem \ref{t1}. For  $x_0\in X_{\rm reg}$, let $(z, \theta, \varphi)$ be canonical coordinates centered at $x_0$ and defined on a canonical patch $D_1=\{(z, \theta): |z|<\varepsilon_1, |\theta|<\pi\}$. Set  $D=\{(z, \theta)\in\mathbb C^{n-1}\times\mathbb R: |z|<\varepsilon, |\theta|<\frac{\pi}{2}\}\Subset D_1$.
Then on $D\times D$,
we have
\begin{equation*}
\begin{split}
&S_m(x, y)\equiv\frac{1}{2\pi}e^{im(x_{2n-1}-y_{2n-1}+\Phi(z, w))}\hat b(z, w, m)\mod O(m^{-\infty}),
\end{split}
\end{equation*}
where
\begin{equation}\label{Eq: Szego reg 02}
\begin{split}
&\hat b(z, w,m)\sim\sum^\infty_{j=0}m^{n-1-j}\hat b_j(z,w)\ \ \mbox{in $S^{n-1}_{{\rm loc\,}}(1,\tilde D\times\tilde D)$},\\
&\hat b_j(z, w)\in C^{\infty}(\tilde D\times \tilde D),\ \  j=0, 1, 2, \cdots,\\
&\hat b_0(z, z)=\pi^{-(n-1)}\abs{\det\mathcal L_x},\ \ x=(z,0),\ \ \forall z\in\tilde D.\end{split}
\end{equation}
Here, we set $\tilde D=\set{z\in\mathbb C^{n-1}: \abs{z}<\varepsilon}$. In particular, we have
\begin{equation}\label{e-gue151124II}
S_m(x, x)\equiv\frac{1}{2\pi}\hat b(z, z, m)\mod O(m^{-\infty}).
\end{equation}
\end{theorem}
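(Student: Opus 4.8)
\textbf{Proof proposal for Theorem \ref{c2}.}

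The plan is to derive Theorem \ref{c2} as a direct specialization of the general expansion in Theorem \ref{b6}, exploiting the geometry of the regular part. The key point is that on $X_{\rm reg}$ one may (by Lemma \ref{l-gue150615}) take the canonical coordinate patch $D_1$ to have full orbit length in the $\theta$-direction, $|\theta|<\pi$, which means that the whole $S^1$-orbit of a point with $|z|$ small is captured inside a single coordinate chart. First I would apply Theorem \ref{b6} to the chosen trivial canonical coordinates at $x_0$ to get, on $D\times D$,
\[
S_m(x,y)\equiv\frac{1}{2\pi}\int_{-\pi}^{\pi}\hat S_m(x,e^{i\theta}\circ y)\,e^{im\theta}\,d\theta\mod O(m^{-\infty}),
\]
with $\hat S_m$ as in \eqref{b2}. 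Since $e^{i\theta}\circ (w,y_{2n-1})=(w,y_{2n-1}+\theta)$ in canonical coordinates and $\hat S_m(x,\cdot)$ carries the factor $e^{-imy_{2n-1}}$ explicitly, I would pull out $e^{-imy_{2n-1}}$ and rewrite the $\theta$-integral as $\int_{-\pi}^{\pi}\hat S_m(x,(w,\theta))e^{im\theta}\,d\theta$, as displayed in \eqref{b3}.

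The second step is to evaluate this integral. Here is where the full orbit length matters: because $|\theta|<\pi$ covers a complete period, the cutoff functions $\tilde\chi_1(w)$ and $\eta(y_{2n-1})$ in \eqref{b2} can be chosen so that $\int_{-\pi}^\pi \eta(\theta+\text{shift})e^{im\theta}e^{-im(\cdots)}\,d\theta$ reproduces, up to $O(m^{-\infty})$, simply the value obtained by dropping $\eta$ and the $x_{2n-1}$-averaging $\chi_0$; that is, the auxiliary bump functions introduced to make the Boutet de Monvel--Sj\"ostrand parametrix argument work all integrate out cleanly on the regular part. Concretely, substituting \eqref{b2} into \eqref{b3} and carrying out the elementary $\theta$-integration against $e^{im\theta}$ collapses $\hat S_m(x,(w,\theta))$ to its value, yielding
\[
S_m(x,y)\equiv\frac{1}{2\pi}e^{im(x_{2n-1}-y_{2n-1}+\Phi(z,w))}\hat b(z,w,m)\mod O(m^{-\infty})
\]
on $D\times D$, which is precisely \eqref{i1}. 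Setting $y=x$ (so $z=w$, $x_{2n-1}=y_{2n-1}$, and $\Phi(z,z)=0$) gives \eqref{e-gue151124II}. The symbol properties of $\hat b$ in $S^{n-1}_{\rm loc}(1;\tilde D\times\tilde D)$ and the leading coefficient $\hat b_0(z,z)=\pi^{-(n-1)}|\det\mathcal L_x|$ are inherited verbatim from \eqref{b2}, since the $\theta$-integration does not touch the $S^{n-1}_{\rm loc}$-structure.

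I expect the main technical obstacle to be the bookkeeping in step two: one must verify that replacing the globally defined Szeg\"o kernel by the local parametrix, then inserting the various cutoffs $\chi,\chi_1,\chi_0,\eta,\tilde\chi_1,\tau$, and finally integrating over the full $S^1$-orbit, produces no genuine boundary contributions and only $O(m^{-\infty})$ errors. This rests on two facts already available: that $e^{-imx_{2n-1}}S_m(x,y)$ is independent of $x_{2n-1}$ on $D$ (the identity \eqref{h6}), which justifies the $\chi_0$-averaging, and that on $X_{\rm reg}$ the chart length $\pi$ exceeds the half-period so that $\{e^{i\theta}\circ y:\theta\in(-\pi,\pi)\}$ meets $D_1$ in the expected way, allowing the $\theta$-integral to be treated as an honest integral over (nearly) a whole orbit rather than a truncated one. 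Once these are in place the computation is the routine one sketched in \eqref{b3}--\eqref{i1}, and no stationary phase beyond what was already used in Theorem \ref{b6} is needed.
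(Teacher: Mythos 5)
Your proposal is correct and follows essentially the same route the paper takes: apply Theorem~\ref{b6}, use Lemma~\ref{l-gue150615} to take $D_1$ with $\abs{\theta}<\pi$, change variables to pull out $e^{-imy_{2n-1}}$ so the kernel integral becomes $\int_{-\pi}^{\pi}\hat S_m(x,(w,\theta))e^{im\theta}d\theta$, and then substitute the explicit form \eqref{b2}. One clarification on your final step is worth making. You describe the $\theta$-integration as producing ``only $O(m^{-\infty})$ errors'' after ``dropping $\eta$ and the $x_{2n-1}$-averaging $\chi_0$,'' but in fact nothing needs to be dropped and no error term arises at this point: in $\hat S_m(x,(w,\theta))e^{im\theta}$ the oscillatory factor $e^{-im\theta}$ coming from $-y_{2n-1}=-\theta$ in the phase cancels identically against $e^{im\theta}$, leaving $e^{im(x_{2n-1}+\Phi(z,w))}\hat b(z,w,m)\tilde\chi_1(w)\eta(\theta)$, so the integral is exactly $\int_{-\pi}^{\pi}\eta(\theta)d\theta=1$ times a $\theta$-independent factor, and $\tilde\chi_1\equiv1$ on $\tilde D$. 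Also, $\chi_0$ is already absorbed into the definition of $\hat b$ in \eqref{b2} (it enters the formula for $\hat b_0$), so it is not a cutoff that the $\theta$-integration has to dispose of. With these two points made precise your argument is exactly the paper's proof of Theorem~\ref{c2}.
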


\subsection{Asymptotic Szeg\H{o}  kernel expansion  on the complement of $X_{\rm reg}$ }\label{Sec: asymp exp on comp Xreg}
In this section, we try to get the asymptotic expansion of the Szeg\H{o} kernel on the complement of $X_{\rm reg}$. Fix $x_0\in X_k$ for some $k>1$, where $X_k$ is defined in (\ref{e-gue150614}). Let $(z, \theta, \varphi)$ be canonical coordinates centered at $x_0$ and defined on a canonical chart  $D_1=\{(z, \theta): |z|<\varepsilon_1, |\theta|<\frac{\pi}{k}-\epsilon\}$.
It is straightforward to see that there is a small neighborhood $D=\{(z, \theta): |z|<\varepsilon, |\theta|<\delta\}\Subset D_1$ of $x_0$ such that
\begin{equation}\label{e-gue151123}
e^{i\theta}\circ (0,0)\neq (z,\hat\theta),\ \ \forall\theta\in[0,2\pi),\ \ (z, \hat\theta)\in D,\ \ z\neq0.
\end{equation}
From Theorem \ref{b6}, we have
\begin{equation}\label{b8}
\begin{split}
S_m(x, x_0)
&\equiv\frac{1}{2\pi}\sum_{s=1}^k\int_{\frac{2\pi}{k}(s-1)}^{\frac{2\pi}{k}s}
\hat S_m(x, e^{i\theta}\circ (0, 0))e^{im\theta}d\theta\mod O(m^{-\infty})\\
&\equiv\frac{1}{2\pi}\sum_{s=1}^ke^{i\frac{2\pi}{k}(s-1)m}\int_0^{\frac{2\pi}{k}}\hat S_m(x, e^{i\theta}\circ (0, 0))e^{im\theta}d\theta\mod O(m^{-\infty})
\end{split}
\end{equation}
for any $x\in D$. By a direct calculation, we find
\begin{equation}\label{b7}
\sum_{s=1}^ke^{i\frac{2\pi}{k}(s-1)m}=\left\{
                                        \begin{array}{ll}
                                          k, ~\text{if}~ k\mid m  & \hbox{;} \\
                                          0, ~\text{if}~ k\nmid m& \hbox{.}
                                        \end{array}
                                      \right.
\end{equation}
From \eqref{e-gue151123}, we can check that
\begin{equation}\label{e-gue151123I}
\frac{k}{2\pi}\int_{-\frac{\pi}{k}}^{\frac{\pi}{k}}
\hat S_m(x, e^{i\theta}\circ (0, 0))e^{im\theta}d\theta
=\frac{k}{2\pi}\int_{-\frac{\pi}{k}}^{\frac{\pi}{k}}
\hat S_m(x, (0, \theta))e^{im\theta}d\theta
\end{equation}
holds. Substituting (\ref{b7}) to (\ref{b8}) for $k\mid m$ and using \eqref{e-gue151123I}, we have
\begin{equation}\label{b9}
\begin{split}
S_m(x, x_0)
&\equiv\frac{k}{2\pi}\int_{-\frac{\pi}{k}}^{\frac{\pi}{k}}
\hat S_m(x, (0, \theta))e^{im\theta}d\theta\mod O(m^{-\infty}).
\end{split}
\end{equation}
Substituting (\ref{b2}) to (\ref{b9}) yields
\begin{equation*}
\begin{split}
S_m(x, x_0)
&\equiv \frac{k}{2\pi}e^{im(x_{2n-1}+\Phi(z, 0))}\hat b(z, 0, m)\mod O(m^{-\infty}).
\end{split}
\end{equation*}
Summing up, we obtain

\begin{theorem}\label{e2}
Let $X$ be as in Theorem \ref{t1}. Assume $x_0\in X_k, k>1$. Let $D_1$, $D$ and $(z, \theta, \varphi)$ be as above with $\frac{\pi}{k}-\epsilon$ replaced by $\delta_1$.
For $k\nmid m$ we have $S_m(x, x_0)=0$ for all $x\in D.$
For $k\mid m$ we have
\begin{equation}\label{e3}
S_m(x, x_0)\equiv \frac{k}{2\pi}e^{im(x_{2n-1}+\Phi(z, 0))}\hat b(z, 0, m)\mod O(m^{-\infty})
\end{equation}
on $D$. In particular, given $k\mid m$ and $x=x_0$, we have
\begin{equation*}
S_m(x_0, x_0)=\frac{k}{2\pi}\hat b(0, 0, m)+O(m^{-\infty})
\end{equation*}
and
\begin{equation*}
\hat b(0, 0, m)\sim \hat b_0(0, 0)m^{n-1}+\hat b_1(0, 0)m^{n-2}+\cdots
\end{equation*}
in the sense that for any $N\in\mathbb N_{0}$ there exists $C_N>0$ independent of $m$ such that
\begin{equation*}
\left|\hat b(0, 0, m)-\sum_{j=0}^N\hat b_j(0, 0)m^{n-1-j}\right|\leq C_Nm^{n-2-N}
\end{equation*}
holds for all $m\in\N$.
\end{theorem}

\section{Equivariant embedding of CR manifolds }\label{sec3}
Let $X$ be a compact strongly pseudoconvex CR manifolds with a locally free transversal CR $S^1$-action. Now we use the Szeg\H{o}  kernel expansion we have established in Section \ref{sec1} to get the equivariant embedding of $X$.

\subsection{Immersion of CR manifold }
We assume $X=X_1\cup X_2\cup\cdots\cup X_l, ~X_1\neq \emptyset,$ where $X_k$, $1\leq k\leq l$, is defined in (\ref{e-gue150614}). For $1\leq k\leq l$ let $\{f_j^k\}_{j=1}^{d_{km}}$ and $\{g_j^k\}_{j=1}^{d_{k(m+1)}}$ be orthonormal bases of $H^0_{b, km}(X)$ and $H^0_{b, k(m+1)}(X)$, respectively. Now for $1\leq k\leq l$ we can define a CR map from $X$ to Euclidean space as follows
\begin{equation*}
\Phi^k_m: X\rightarrow \mathbb C^{d_{km}+d_{k(m+1)}}, x\mapsto (f_1^k(x),\cdots, f_{d_{km}}^k(x), g_1^k(x), \cdots, g_{d_{k(m+1)}}^k(x) ).
\end{equation*}
Combining the $\Phi_{m}^ks, 1\leq k\leq l$, we define a CR map
\begin{equation*}
\Phi_m: X\rightarrow \mathbb C^{N_m}, x\rightarrow (\Phi^1_m(x), \cdots, \Phi^l_m(x)),
\end{equation*}
where $N_m=\sum\limits_{k=1}^l (d_{km}+d_{k(m+1)}).$ If the transversal CR $S^1$-action on $X$ is globally free, then $X=X_1=X_{\rm reg}$ and Epstein \cite{Ep92} showed that $\Phi_m^1$ is an CR embedding when $m$ is large. However, if the transversal CR $S^1$-action on $X$ is just locally free the CR functions in $H^0_{b, m}(X)\bigoplus H^0_{b, m+1}(X)$ are not enough for the embedding. The reason is that the space $H^0_{b, m}(X)\bigoplus H^0_{b, m+1}(X)$ will be not enough to separate the points in $X\setminus X_{\rm reg}$.

Now we use the asymptotic Szeg\H{o}  kernel expansion in Section \ref{sec1} to establish

\begin{lemma}\label{k1}
The map $\Phi_m: X\rightarrow \mathbb C^{N_m}$ is an immersion when $m$ is large.
\end{lemma}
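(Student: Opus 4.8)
The plan is to show that at every point $x_0\in X$ the differential of $\Phi_m$ is injective for $m$ large, and since $\Phi_m$ is built from CR functions it suffices to check injectivity on the real tangent space $T_{x_0}X$, which splits (over $\mathbb C$) as $\mathbb C T(x_0)\oplus T^{1,0}_{x_0}X\oplus T^{0,1}_{x_0}X$. Because all components of $\Phi_m$ are CR functions, the $\overline\partial_b$-derivatives vanish, so it is enough to control the action of the holomorphic tangent vectors $Z_1,\dots,Z_{n-1}$ (in canonical coordinates, $Z_j=\partial_{z_j}+i\partial_{z_j}\varphi\,\partial_\theta$) together with the Reeb field $T=\partial_\theta$ on the $f^k_j$, $g^k_j$. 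On a CR function $u=\hat u(z)e^{imx_{2n-1}}$ one has $Tu=imu$ and $Z_ju=(\partial_{z_j}\hat u)e^{imx_{2n-1}}$, so injectivity of $d\Phi_m$ at $x_0$ amounts to: (i) not all $f^1_j(x_0)$ (say) vanish simultaneously — this is separation in the ``$T$-direction'' — and (ii) the vectors $(\partial_{z_1}\hat f^k_j(z_0),\dots,\partial_{z_{n-1}}\hat f^k_j(z_0))$, ranging over $j$, span $\mathbb C^{n-1}$.

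The key tool is the Szeg\"o kernel asymptotics of Theorem~\ref{c2} (on $X_{\rm reg}$) and Theorem~\ref{e2} (on $X_k$, $k>1$), applied with $km$ in place of $m$. Concretely, fix $x_0\in X_k$ and choose $k$ with $x_0\in X_k$; then $km\equiv 0\pmod k$, so Theorem~\ref{e2} (or Theorem~\ref{c2} if $k=1$) gives, in canonical coordinates trivial at $x_0$,
\[
S_{km}(x,x_0)\equiv \frac{k}{2\pi}e^{ikm(x_{2n-1}+\Phi(z,0))}\hat b(z,0,km)\mod O((km)^{-\infty}),
\]
with $\hat b(z,0,km)\sim \sum_j (km)^{n-1-j}\hat b_j(z,0)$ and leading coefficient $\hat b_0(0,0)=\pi^{-(n-1)}|\det\mathcal L_{x_0}|\neq 0$ by strong pseudoconvexity. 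Since $S_{km}(x,x_0)=\sum_j f^k_j(x)\overline{f^k_j(x_0)}$, evaluating at $x=x_0$ gives $\sum_j|f^k_j(x_0)|^2=S_{km}(x_0,x_0)=\tfrac{k}{2\pi}\hat b(0,0,km)+O((km)^{-\infty})\sim \tfrac{k}{2\pi}\pi^{-(n-1)}|\det\mathcal L_{x_0}|(km)^{n-1}>0$ for $m$ large, which proves (i). For (ii), apply $Z_l$ in $x$ and $\overline{Z_p}$ in $x_0$ to the asymptotic identity: the off-diagonal second derivatives of $\Phi$ are controlled via the expansion $\Phi(z,w)=i(\varphi(z)+\varphi(w))-2i\sum \tfrac{\partial^{|\alpha|+|\beta|}\varphi}{\partial z^\alpha\partial\bar z^\beta}(0)\tfrac{z^\alpha}{\alpha!}\tfrac{\overline w^\beta}{\beta!}+O(|(z,w)|^{N+1})$, and one finds that the matrix $\big(\sum_j (Z_l\hat f^k_j)(z_0)\overline{(Z_p\hat f^k_j)(z_0)}\big)_{l,p} = Z_{l,x}\overline{Z_{p,y}}S_{km}(x,y)\big|_{x=y=x_0}$ has leading term a positive constant times $(km)^n\hat b_0(0,0)$ times the positive-definite Levi matrix $(\lambda_l\delta_{lp})$ (after using $\varphi(z)=\sum\lambda_j|z_j|^2+O(|z|^3)$ and that the mixed derivative of $\Phi$ at the diagonal reproduces the Hermitian form $\partial_{z_l}\partial_{\bar z_p}\varphi(0)$). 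Nondegeneracy of this Gram-type matrix forces the spanning condition (ii). Uniformity in $x_0$ comes from the fact that the asymptotic expansions hold locally uniformly and that one can cover $X$ by finitely many such coordinate patches (one per stratum $X_k$, $k\le l$), together with the compactness of $X$; a standard argument then upgrades pointwise injectivity of $d\Phi_m$ to injectivity on a fixed neighborhood, hence on all of $X$.

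The main obstacle is the non-uniform/strata-dependent nature of the Szeg\"o expansion on $X\setminus X_{\rm reg}$: Theorem~\ref{e2} only gives the expansion of $S_m(\cdot,x_0)$ for $x_0$ \emph{fixed} in an irregular stratum, with the constant $k$ (and even the leading coefficient) jumping discontinuously between strata, so one cannot simply quote a single locally uniform expansion valid near all of $X$. The remedy, and the part that needs care, is to handle the immersion property stratum by stratum: for $x_0\in X_k$ one uses the block $\Phi^k_m$ (built from $H^0_{b,km}(X)$), noting that $km$ is divisible by $k$ so the nonvanishing leading term is available, while for points in $X_{k'}$ with $k'\ne k$ one instead uses the block $\Phi^{k'}_m$; since $\Phi_m$ bundles together $\Phi^1_m,\dots,\Phi^l_m$, at every point at least one block is ``active'' and yields injectivity of the differential, and combining with the finiteness $X=X_1\cup\cdots\cup X_l$ (each $X_k$ closed, only finitely many nonempty) and compactness gives the uniform choice of $m$. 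The remaining bookkeeping — checking that applying the CR vector fields $Z_l,\overline{Z_p}$ to the oscillatory asymptotics really produces the Levi form at leading order, and that the $O(m^{-\infty})$ errors and their derivatives remain negligible after these differentiations — is routine given Theorem~\ref{t1} and the properties of the phase $\Phi$.
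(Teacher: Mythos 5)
Your proposal takes a genuinely different route from the paper. Instead of constructing explicit test functions $u_1,\dots,u_n$ and studying the CR functions $v_j=S_{km}u_j$ (computing $\partial_{\overline z_j}\overline{v_j}(x_0)$, $\partial_{x_{2n-1}}\overline{v_n}(x_0)$, etc.\ via Laplace's method, which is what the paper does), you propose a Gram-matrix argument built on the quantities
\[
Z_{l,x}\,\overline{Z_{p,y}}\,S_{km}(x,y)\big|_{x=y=x_0}=\sum_j Z_l f_j(x_0)\,\overline{Z_p f_j(x_0)}.
\]
That is a legitimate and attractive strategy in principle, but there is a genuine gap in the way you propose to compute these quantities. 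You say you will ``apply $Z_l$ in $x$ and $\overline{Z_p}$ in $x_0$'' to the asymptotic identity of Theorem~\ref{e2}. However Theorem~\ref{e2} is a statement about $S_m(x,x_0)$ for a \emph{fixed} point $x_0\in X_k$; it is an expansion in $x$ only, not a two-variable expansion on $D\times D$, and cannot be differentiated in the $x_0$ slot. To differentiate in the second variable you would need to return to Theorem~\ref{b6}, which holds on $D\times D$, and analyze
\[
Z_{l,x}\,\overline{Z_{p,y}}\,\frac{1}{2\pi}\int_{-\pi}^{\pi}\hat S_m\big(x,\,e^{i\theta}\circ y\big)\,e^{im\theta}\,d\theta\Big|_{x=y=x_0}.
\]
When $y$ is allowed to vary near $x_0\in X_k$, $k>1$, the cut-offs in $\hat S_m$ receive contributions from each return of the orbit of $y$ near $x_0$, at $\theta\approx 2\pi s/k$ for $s=0,\dots,k-1$. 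Differentiating in $y$ at such a return requires pushing the vector $\overline{Z_p}$ forward through $de^{i2\pi s/k}$, and the isotropy subgroup $\Z/k\Z$ at $x_0$ acts nontrivially on $T^{1,0}_{x_0}X$ in general; the phases this introduces modify the geometric sum over $s$, and the reduction $\sum_{s=0}^{k-1}e^{2\pi i s m/k}=k$ for $k\mid m$ is no longer the relevant sum. Your assertion that the resulting Gram matrix ``has leading term a positive constant times $(km)^n\hat b_0(0,0)$ times the positive-definite Levi matrix'' is therefore not a routine consequence of the stated theorems, and it is exactly at points of $X\setminus X_{\rm reg}$ — where a careful argument is needed — that your reasoning falls back on the pointwise Theorem~\ref{e2}, which does not support it. For a concrete warning sign, look at the paper's own example in Section~\ref{sec4}: for $X=S^3$ with the action $(e^{i\theta}z_1,e^{in\theta}z_2)$, $n\geq 2$, a function $f\in H^0_{b,m'}(X)$ is a polynomial $\sum a_{ab}z_1^a z_2^b$ with $a+nb=m'$, and $\partial_{z_1}f(0,1)\neq0$ forces $m'\equiv1\pmod n$; for $m'=nm$, the mode used in the block $\Phi^n_m$ at $x_0=(0,1)\in X_n$, no such term exists and the Gram matrix of the $(1,0)$-derivatives vanishes identically. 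So the claimed leading-term positivity is not automatic on the singular strata, and the part you dismiss as ``remaining bookkeeping'' is precisely where the proof must be carried out (and where the isotropy action must actually enter the analysis). The paper's proof, by contrast, localizes through the test function $\overline{u_j(y)}$ and performs the $y$-integration before touching $x_0$, so it only ever differentiates the Theorem~\ref{b6} kernel in the evaluation variable $x$; this is a structurally different computation from yours and is not reproduced by what you wrote.
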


\begin{proof}
For $x_0\in X_k$, let $(z, \theta, \varphi)$ be canonical coordinates centered at $x_0$ and defined on a canonical local patch $D=\{(z, \theta): |z|<\varepsilon, |\theta|<\delta\}=\tilde D\times(-\delta,\delta)$. Assume that $k|m$. Let $\{f_j\}_{j=1}^{d_m}\subset H^0_{b, m}(X)$ be an orthonormal basis.
Since $S_m(x, y)=\sum\limits_{j=1}^{d_m}f_j(x)\overline{f_j(y)}$, we have that $\overline{S_m(x, y)}=S_m(y, x)$. Furthermore, for any $u\in C_0^\infty(D)$ we have $S_m u(x)=\int_{D}S_m(x, y)u(y)dv_X(y)$ and hence
\begin{equation}\label{c1}
\overline{S_m u}=\int_{D}\overline{S_m(x, y)}\overline {u(y)}dv=\int_{D}S_m(y, x)\overline{u(y)}dv_X.
\end{equation}
Choose cut-off functions $\chi\in C_0^\infty(\mathbb C^{n-1}), \chi_2\in C_0^\infty(-\delta, \delta)$ such that ${\rm supp}\chi\Subset\{w\in\mathbb C^{n-1}: |w|<1\}$ and $\int_{-\delta}^{\delta}\chi_2(y_{n-1})dy_{2n-1}=1$.
For $j=1, \cdots, n-1$, set
\begin{equation}\label{c6}
u_j(y)=w_j\chi\left(\frac{\sqrt m w}{\log m}\right)\chi_2(y_{2n-1})e^{im y_{2n-1}}e^{im {\rm Re}\Phi(w, 0)},\end{equation}
where $\Phi$ is as in Theorem~\ref{b6}.
Then $u_j\in C_0^\infty(X)$ has its support in $D$ if $m$ is sufficiently large.
Define $v_j=S_m u_j, j=1, \cdots, n-1.$
 Then from Theorem \ref{b6} and (\ref{c1}) we have
\begin{equation*}
\begin{split}
\overline{S_m u_j(x)}
&=\frac{1}{2\pi}\int_{D}\int_{-\pi}^{\pi}\hat S_m(y, e^{i\theta}\circ x)e^{im\theta}d\theta\overline{u_j(y)}dv_X+\int_D \overline{R_m(x, y)}\overline {u_j(y)}dv_X,
\end{split}
\end{equation*}
where $R_m(x,y)\equiv0\mod O(m^{-\infty})$.
With respect to the canonical local coordinates, one notes that
\begin{equation}\label{trick1}
\frac{\partial \hat S_m(y, e^{i\theta}\circ x)}{\partial \overline z_j}\Big|_{x=x_0}=\frac{\partial \hat S_m}{\partial \overline z_j}(y, e^{i\theta}\circ x_0).
\end{equation}
Then by (\ref{trick1}) and a direct calculation we have
\begin{equation}\label{c4}
\begin{split}
\frac{\partial\overline{S_m u_j}}{\partial\overline {z_j}}( x_0)
&\equiv\frac{k}{2\pi}\int_D\int_{-\frac{\pi}{k}}^{\frac{\pi}{k}}
\frac{\partial \hat S_m}{\partial\overline{z_j}}(y, (0, \theta))
e^{im\theta}d\theta\overline{u_j(y)}dv_X+O(m^{-\infty})
\end{split}
\end{equation}
which leads to
\begin{equation}\label{c3}
\begin{split}
&\frac{\partial\hat S_m}{\partial{\overline z_j}}(y, (0, \theta))=e^{im(y_{2n-1}-\theta+\Phi(w, 0))}\eta(\theta)\times\\
&\left[im\frac{\partial\Phi(w, 0)}{\partial\overline z_j}\hat b(w, 0, m)\tilde\chi_1(0)+\frac{\partial \hat b(w, 0, m)}{\partial\overline z_j}\tilde\chi_1(0)+\hat b(w, 0, m)\frac{\partial\tilde\chi_1}{\partial \overline z_j}(0)\right]\\
&=e^{im(y_{2n-1}-\theta+\Phi(w, 0))}\eta(\theta)\left[2m(\lambda_jw_j+O(|w|^2))\hat b(w, 0, m)+\frac{\partial\hat b(w, 0, m)}{\partial\overline z_j}\right].
\end{split}
\end{equation}
Substituting (\ref{c3}) to (\ref{c4}), we have
\begin{equation}\label{c5}
\begin{split}
\frac{\partial\overline{S_m u_j}}{\partial\overline {z_j}}( x_0)
=&\frac{k}{2\pi}\int_D
e^{im(y_{2n-1}+\Phi(w, 0))}
\left[2m(\lambda_jw_j+O(|w|^2))\hat b(w, 0, m)+\frac{\partial\hat b(w, 0, m)}{\partial\overline z_j}\right]\times\\
&\overline{u_j(y)}dv_X+O(m^{-\infty}).
\end{split}
\end{equation}
Substituting (\ref{c6}) to (\ref{c5}) and taking the coordinate transformation $w\rightarrow \frac{w}{\sqrt m}$, we have
\begin{equation}\label{l7}
\begin{split}
\frac{\partial\overline{S_m u_j}}{\partial\overline {z_j}}( x_0)
&=\frac{k}{2\pi}\int_{\abs{w}\leq\log m}e^{-m {\rm Im}\Phi(\frac{w}{\sqrt m}, 0)}m^{-(n-1)}\times\\
&\left[2(\lambda_j|w_j|^2+\frac{1}{\sqrt m}O(|w|^3))\hat b(\frac{w}{\sqrt m}, 0, m)+\frac{1}{\sqrt m}\frac{\partial\hat b(\frac{w}{\sqrt m}, 0, m)}{\partial\overline z_j}\overline w_j\right]\times\\
&\chi(\frac{w}{\log m})\lambda(\frac{w}{\sqrt m})dv(w)+O(m^{-\infty}),
\end{split}
\end{equation}
and hence
\begin{equation}\label{d2}
\lim_{m\rightarrow\infty}\frac{\partial\overline{S_m u_j}}{\partial\overline {z_j}}( x_0)=\frac{k}{2\pi}\int_{\mathbb C^{n-1}}e^{-\lambda|w|^2}2\lambda_j|w_j|^2\hat b_0(0,0)dv(w)=c_j\neq 0,
\end{equation}
where
$\lambda |w|^2=\sum_{j=1}^{n-1}\lambda_j|w_j|^2$ and $c_j$ is a non-zero real number.

For $j\neq k$, we can repeat the procedure above and get
\begin{equation}\label{l8}
\begin{split}
&\frac{\partial\overline{S_m u_j}}{\partial\overline z_k}(x_0)=\frac{k}{2\pi}\int_{\abs{w}\leq\log m}e^{-m {\rm Im}\Phi(\frac{w}{\sqrt m}, 0)}m^{-(n-1)}\times\\
&\left[(2\lambda_k w_k\overline w_j+\frac{1}{\sqrt m}O(|w|^3))\hat b(\frac{w}{\sqrt m}, 0, m)+\frac{1}{\sqrt m}\frac{\partial\hat b(\frac{w}{\sqrt m}, 0, m)}{\partial\overline z_k}\overline w_j\right]\times\\
&\chi(\frac{w}{\log m})\lambda(\frac{w}{\sqrt m})dw+O(m^{-\infty}).
\end{split}
\end{equation}
Letting $m\rightarrow\infty$, we get
\begin{equation}\label{d3}
\lim_{m\rightarrow\infty}\frac{\partial\overline{S_m u_j}}{\partial\overline z_k}(x_0)=\frac{k}{2\pi}\int_{\mathbb C^{n-1}}e^{-\lambda|w|^2}2\lambda_kw_k\overline w_j\hat b_0(0,0)dv(w)=0.
\end{equation}

Similarly, one computes
\begin{equation}\label{l6}
\begin{split}
&\frac{\partial\overline{S_m u_j}}{\partial z_k}(x_0)=\frac{k}{2\pi}\int_{\abs{w}\leq\log m}e^{-m \rm Im\Phi(\frac{w}{\sqrt m}, 0)}m^{-(n-1)}\times\\
&\left[(2\lambda_k \overline w_k\overline w_j+\frac{1}{\sqrt m}O(|w|^3))\hat b(\frac{w}{\sqrt m}, 0, m)+\frac{1}{\sqrt m}\frac{\partial\hat b(\frac{w}{\sqrt m}, 0, m)}{\partial z_k}\overline w_j\right]\times\\
&\chi(\frac{w}{\log m})\lambda(\frac{w}{\sqrt m})dw+O(m^{-\infty}).
\end{split}
\end{equation}
Letting $m\rightarrow\infty$, we have
\begin{equation}\label{l7}
\lim_{m\rightarrow\infty}\frac{\partial\overline{S_mu_j}}{\partial z_k}(x_0)=\frac{k}{2\pi}\int_{\mathbb C^{n-1}}e^{-\lambda|w|^2}2\lambda_k \overline w_k\overline w_j\hat b_0(0,0)dv(w)=0.
\end{equation}


Given $j=n$, choose $\chi_3(y_{2n-1})\in C_0^\infty(-\delta_1, \delta_1)$ satisfying
$
\int_{-\delta_1}^{\delta_1}y_{2n-1}\chi_3(y_{2n-1})=1.
$
Set
\begin{equation*}
u_n=m y_{2n-1}\chi_3(my_{2n-1})e^{im y_{2n-1}}\chi\left(\frac{\sqrt mw}{\log m}\right)e^{im\rm Re\Phi(w, 0)},\ \ v_n=S_mu_n.
\end{equation*}
Then by (\ref{trick1}) and the same argument as in (\ref{b8}) we have
\begin{equation}\label{c8}
\begin{split}
\frac{\partial\overline{S_m u_n}}{\partial x_{2n-1}}(x_0)
&=\frac{k}{2\pi}\int_D\int_{-\frac{\pi}{k}}^{\frac{\pi}{k}}\frac{\partial\hat S_m}{\partial x_{2n-1}}(y, e^{i\theta}\circ x_0)e^{im\theta}d\theta\overline{u_n(y)}dv_X
+O(m^{-\infty}).
\end{split}
\end{equation}
By a direct calculation, we have
\begin{equation}\label{c7}
\begin{split}
\frac{\partial\hat S_m}{\partial x_{2n-1}}(y, 0, \theta)=e^{im(y_{2n-1}-\theta+\Phi(w, 0))}\hat b(w, 0, m)\left[-im\eta(\theta)+
\frac{\partial\eta(\theta)}{\partial\theta}\right].
\end{split}
\end{equation}
Substituting (\ref{c7}) to  (\ref{c8}) and using the fact that $\int_{-\frac{\pi}{k}}^{\frac{\pi}{k}}\frac{\partial\eta(\theta)}{\partial\theta}d\theta=0$, we find
\begin{equation}\label{c9}
\begin{split}
&\frac{k}{2\pi}\int_D\int_{-\frac{\pi}{k}}^{\frac{\pi}{k}}\frac{\partial\hat S_m}{\partial x_{2n-1}}(y, e^{i\theta}\circ x_0)e^{im\theta}d\theta\overline{u_n(y)}dv_X\\
=&\frac{-ik}{2\pi}\int_{\abs{w}\leq\log m}m^{-(n-1)}\hat b(\frac{w}{\sqrt m}, 0, m)e^{-m{\rm Im}\Phi(\frac{w}{\sqrt m}, 0)}\lambda(w)dv(w).
\end{split}
\end{equation}
Substituting (\ref{c9}) to (\ref{c8}) and letting $m\rightarrow\infty$, we have

\begin{equation}\label{d1}
\lim_{m\rightarrow\infty}\frac{\partial\overline{S_m u_n}(x_0)}{\partial x_{2n-1}}=\frac{-ik}{2\pi}\hat b_0(0, 0)\int_{\mathbb C^{n-1}}e^{-\lambda |w|^2}dv(w)=ic_n\neq 0,
\end{equation}
where $c_n$ is a nonzero real number.

On the other hand, for $j=1, \cdots, n-1$ by a similar calculation  we have
\begin{equation}\label{l3}
\begin{split}
\frac{\partial\overline{S_m u_n}}{\partial \overline z_j}(x_0)
=&\frac{k}{2\pi}\int_{\abs{w}\leq\log m}e^{-m{\rm Im}\Phi(\frac{w}{\sqrt m}, 0)}
[2(\lambda_j\frac{w_j}{\sqrt m}+\frac{1}{m}O(|w|^2))\hat b(\frac{w}{\sqrt m}, 0, m)\\
&+\frac{1}{m}\frac{\partial\hat b}{\partial\overline z_j}(\frac{w}{\sqrt m}, 0, m)]
\chi(\frac{w}{\log m})\lambda(\frac{w}{\sqrt m})m^{-(n-1)}dv(w).
\end{split}
\end{equation}
By (\ref{l3}) we get
\begin{equation}\label{l4}
\left|\frac{\partial\overline{S_m u_n}}{\partial \overline z_j}(x_0)\right|\leq C\frac{1}{\sqrt m},
\end{equation}
where $C$ is a constant which does not depend on $x_0$ and $m$.
Similarly, we find
\begin{equation}\label{l5}
\left|\frac{\partial\overline{S_m u_n}}{\partial  z_j}(x_0)\right|\leq C\frac{1}{\sqrt m}.
\end{equation}
Set $v_j=\alpha_{2j-1}+i\alpha_{2j}$, $j=1, \cdots, n$. Then combining the above arguments there are positive constants $c$, $C$ independent of $x_0$ and $m$ and a sequence $\{\varepsilon_m\}$  which does not depend on $x_0\in X$ with $\varepsilon_m\rightarrow 0$ as $m\rightarrow\infty$ such that the following estimates hold
\begin{equation}\label{l9}
\begin{split}
&\left|\frac{\partial \alpha_j}{\partial x_j}(x_0)\right|\geq c; \left|\frac{\partial \alpha_{2n}}{\partial x_{2n-1}}(x_0)\right|\geq c , j=1, \cdots, 2n-2,\\
&\left|\frac{\partial\alpha_j}{\partial x_k}(x_0)\right|\leq \varepsilon_m, j\neq k,  j, k=1, \cdots 2n-2,\\
&\left|\frac{\partial\alpha_{2n}}{\partial x_j}(x_0)\right|\leq C\frac{1}{\sqrt m}, j=1, \cdots, 2n-2.
\end{split}
\end{equation}
From (\ref{l9}) the real Jacobian matrix of $\Phi_m$ is non-degenerate at any $x_0\in X$ when $m$ is large enough which
implies that $\Phi_m$ is an immersion. Thus, we get the conclusion of the lemma.
\end{proof}

\subsection{Analysis near the complement of $X_{\rm reg}$}\label{s-f}
In order to get the global embedding of CR manifolds by CR functions which lie in the positive Fourier components we need the following
\begin{proposition}\label{p1}
Fix $x_0\in X\setminus X_{\rm reg}$. Without loss of generality, we assume $x_0\in X_{k_0}$ for some $k_0>1$. We have
\begin{enumerate}
  \item There exist a positive integer $m_0$ and a neighborhood $U(x_0)$ of $x_0$ such that $\Phi^{k_0}_{m_0}: U(x_0)\rightarrow \mathbb C^{d_{k_0m_0}+d_{k_0(m_0+1)}}$ is an embedding  and $S_{k_0m_0}(x, x_0)\neq 0$, $S_{k_0(m_0+1)}(x,x_0)\neq0$, for all $ x\in U(x_0)$.
\item There exist positive constants $\varepsilon_0, \delta_0$ and a neighborhood $V(x_0)$ of $x_0$ with $V(x_0)\Subset U(x_0)$ such that
\begin{equation*}
	\begin{split}
		&e^{i\theta}\circ V(x_0)\subset U(x_0), \forall \theta\in I(x_0, \varepsilon_0),\\
		&-1\leq\cos k_0\theta\leq 1-\delta_0, \forall \theta\not\in I(x_0, \varepsilon_0), 0\leq\theta<2\pi
	\end{split}
\end{equation*}
holds, where $I(x_0, \varepsilon_0)$ is given by
\begin{equation}\label{e6}
\begin{split}
I(x_0, \varepsilon_0)
&=\{\theta: 0\leq\theta<\varepsilon_0\}\cup\{\theta: |\theta-\frac{2\pi}{k_0}|<\varepsilon_0\}\cup\{\theta: |\theta-\frac{4\pi}{k_0}|<\varepsilon_0\}\cup\cdots\\
&\quad\cup\{\theta: |\theta-\frac{2(k_0-1)\pi}{k_0}|<\varepsilon_0\}\cup
\{\theta: 2\pi-\varepsilon_0<\theta<2\pi\}.
\end{split}
\end{equation}
\item Fix $0<\sigma<\frac{\delta_0}{100}$, where $\delta_0>0$ is as in (2). There exist a positive integer $m_1$ and a neighborhood $W(x_0)$ of $x_0$ with $W(x_0)\Subset V(x_0)$ such that $S_{k_0m_1}(x,x_0)\neq0$ for all $x\in W(x_0)$ and the real part of $\frac{S_{k_0(m_1+1)}(x, x_0)}{S_{k_0m_1}(x, x_0)}$ denoted by $\mathcal R_{k_0m_1}(x)$ satisfies
\begin{equation*}|1-\mathcal R_{k_0m_1}(x)|<\sigma, \forall x\in W(x_0).\end{equation*}
The imaginary part of $\frac{S_{k_0(m_1+1)}(x, x_0)}{S_{k_0m_1}(x, x_0)}$ denoted by $\mathcal I_{k_0m_1}(x)$ satisfies the following inequality
\begin{equation*}
|\mathcal I_{k_0m_1}(x)|<\frac{\sigma}{8}, \forall x\in W(x_0).
\end{equation*}
\item For any positive constant $c>0$, there exist a positive integer $m_2$ and a neighborhood $\hat W(x_0)\Subset W(x_0)$ of $x_0$ such that
\begin{equation*}
|S_{k_0m_2}(x, x_0)|>\frac{c}{2}, \forall  x\in\hat W(x_0)
\end{equation*}
and
\begin{equation*}
|S_{k_0m_2}(y, x_0)|<\frac{c}{8}, \forall y \not\in\bigcup\limits_{0\leq\theta<2\pi}e^{i\theta}\circ W(x_0)
\end{equation*}
hold.
\end{enumerate}
\end{proposition}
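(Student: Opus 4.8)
The plan is to prove (1)--(4) in turn. Parts (1), (3), (4) are driven by the local expansions of Theorem~\ref{e2} and Theorem~\ref{b6} on a canonical patch trivial at $x_0$; here strong pseudoconvexity enters through $\hat b_0(0,0)=\pi^{1-n}\abs{\det\mathcal L_{x_0}}\neq0$, so the leading symbol coefficient never vanishes at $x_0$. Part (2) is a statement about the locally free $S^1$ action only. For (1) I would note that the Jacobian computation in the proof of Lemma~\ref{k1} at a point of $X_{k_0}$ uses only Fourier components of order divisible by $k_0$, i.e.\ exactly the functions entering $\Phi^{k_0}_m$, so it already shows $\Phi^{k_0}_{m_0}$ is an immersion, hence a local embedding, near $x_0$ once $m_0$ is large; and Theorem~\ref{e2} gives $S_{k_0m_0}(x_0,x_0)=\frac{k_0}{2\pi}\hat b(0,0,k_0m_0)+O(m_0^{-\infty})\neq0$ for $m_0$ large and likewise for $k_0(m_0+1)$, so by continuity $S_{k_0m_0}(\cdot,x_0)$ and $S_{k_0(m_0+1)}(\cdot,x_0)$ are nowhere zero near $x_0$; one then lets $U(x_0)$ be the intersection of these neighbourhoods with a canonical patch on which Theorem~\ref{e2} is valid. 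For (2), from $e^{2\pi i/k_0}\circ x_0=x_0$ one gets $e^{2\pi ij/k_0}\circ x_0=x_0$ for all $j$, so continuity of the action and compactness yield $\varepsilon_0>0$ and $V(x_0)\Subset U(x_0)$ with $e^{i\theta}\circ V(x_0)\subset U(x_0)$ for $\theta\in I(x_0,\varepsilon_0)$; with $\varepsilon_0$ fixed, $\cos k_0\theta=1$ on $[0,2\pi)$ exactly at the points $2\pi j/k_0$, all of which lie in $I(x_0,\varepsilon_0)$, hence $\cos k_0\theta\le1-\delta_0$ off $I(x_0,\varepsilon_0)$ for some $\delta_0>0$.

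For (3): Theorem~\ref{e2} gives, on the canonical patch and modulo $O(m^{-\infty})$, $S_{k_0m}(x,x_0)=\frac{k_0}{2\pi}e^{ik_0m(x_{2n-1}+\Phi(z,0))}\hat b(z,0,k_0m)$ with the \emph{same} phase $\Phi$ and symbol $\hat b(z,0,\,\cdot\,)\sim\hat b_0(z,0)m^{n-1}+\cdots$, for $m=m_1$ and $m=m_1+1$. Where the leading term dominates one then has
\[
\frac{S_{k_0(m_1+1)}(x,x_0)}{S_{k_0m_1}(x,x_0)}=e^{ik_0(x_{2n-1}+\Phi(z,0))}\,\frac{\hat b(z,0,k_0(m_1+1))}{\hat b(z,0,k_0m_1)}\,\bigl(1+O(m_1^{-\infty})\bigr),
\]
whose right-hand side tends to $1$: the symbol ratio tends to $1$ uniformly for $z$ near $0$ since $\hat b_0(0,0)\neq0$, and $e^{ik_0(x_{2n-1}+\Phi(z,0))}\to1$ as $x\to x_0$. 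To make the remainder negligible I would fix $m_1$ large first and then choose $W(x_0)\Subset V(x_0)$ so small --- roughly $\abs{z}^2\lesssim(\log m_1)/m_1$ --- that $k_0m_1\,{\rm Im\,}\Phi(z,0)\le C\log m_1$ on $W(x_0)$; there the leading term has modulus bounded below by a fixed power of $m_1$, so $S_{k_0m_1}(\cdot,x_0)\neq0$ on $W(x_0)$ and the displayed ratio is within any prescribed $\sigma'$ of $1$. Taking $\sigma'<\sigma/8$ gives $\abs{1-\mathcal R_{k_0m_1}(x)}<\sigma$ and $\abs{\mathcal I_{k_0m_1}(x)}<\sigma/8$ on $W(x_0)$.

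For (4): since $S_{k_0m_2}(x_0,x_0)=\frac{k_0}{2\pi}\hat b(0,0,k_0m_2)+O(m_2^{-\infty})\to\infty$, for $m_2$ large $\abs{S_{k_0m_2}(\cdot,x_0)}>c/2$ on a neighbourhood of $x_0$, which one intersects with $W(x_0)$ to obtain $\hat W(x_0)$. For the off-orbit estimate I would use $S_{k_0m_2}=SQ_{k_0m_2}$ (Lemma~\ref{a2}), equivalently $S_{k_0m_2}(y,x_0)=\frac{1}{2\pi}\int_0^{2\pi}e^{ik_0m_2\theta}S(y,e^{i\theta}\circ x_0)\,d\theta$ with $S$ the full Szeg\"o projection kernel, which is smooth off the diagonal by \cite{BS76}. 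The complement $K$ of $\bigcup_{0\le\theta<2\pi}e^{i\theta}\circ W(x_0)$ is compact and, because $x_0\in W(x_0)$, disjoint from the orbit of $x_0$, so $(y,\theta)\mapsto S(y,e^{i\theta}\circ x_0)$ is smooth on $K\times[0,2\pi]$ with $\theta$-derivatives bounded uniformly; repeated integration by parts in $\theta$ then gives $\abs{S_{k_0m_2}(y,x_0)}=O(m_2^{-\infty})$ uniformly on $K$, hence $<c/8$ for $m_2$ large. The saturated formulation is natural because $f_j(e^{i\theta}\circ y)=e^{ik_0m_2\theta}f_j(y)$ for $f_j\in H^0_{b,k_0m_2}(X)$, so $\abs{S_{k_0m_2}(\cdot,x_0)}$ is $S^1$-invariant.

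The main obstacle is the circularity in (3): the leading term of $S_{k_0m_1}(\cdot,x_0)$ decays like $e^{-k_0m_1\,{\rm Im\,}\Phi(z,0)}$, so on any \emph{fixed} neighbourhood of $x_0$ it eventually drops below every polynomial power of $m_1$ and non-vanishing of $S_{k_0m_1}(\cdot,x_0)$ there cannot be read off from the expansion; hence $W(x_0)$ must be chosen after $m_1$ and shrink with it at the logarithmic rate above. The off-diagonal decay needed in (4), though essential, is a clean consequence of the Boutet de Monvel--Sj\"ostrand description of $S$.
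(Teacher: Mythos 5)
Your proposal is correct and follows essentially the same route as the paper: part (1) from Theorem~\ref{e2} plus the Jacobian estimate of Lemma~\ref{k1} (which at $x_0\in X_{k_0}$ indeed only sees Fourier components divisible by $k_0$); part (2) from continuity of the action and compactness; part (3) by comparing the leading term $a_m(x)=\frac{k_0}{2\pi}e^{ik_0m(x_{2n-1}+\Phi(z,0))}\hat b(z,0,k_0m)$ against the $O(m^{-\infty})$ remainder after shrinking $W(x_0)$ with $m_1$; part (4) from off-diagonal smoothness of the full Szeg\H{o} kernel and integration by parts in the $S^1$-direction, as in Lemma~\ref{f5}. The only cosmetic deviations are: in (3) you shrink $W(x_0)$ at the rate $\abs{z}^2\lesssim(\log m_1)/m_1$, while the paper uses $\abs{z}<1/m_1$, which keeps the leading term bounded below by a constant multiple of $m_1^{n-1}$ rather than just a fixed power --- either choice works, but the paper's is tighter and avoids tracking the exponent; in (4) you phrase the off-orbit decay via the global identity $S_{k_0m_2}(y,x_0)=\frac1{2\pi}\int_0^{2\pi}e^{ik_0m_2\theta}S(y,e^{i\theta}\circ x_0)\,d\theta$ and periodic integration by parts on the compact saturated complement, whereas the paper invokes the cut-off argument of Lemma~\ref{f5} on an intermediate neighborhood $W_1(x_0)\Subset W(x_0)$; your version is cleaner but rests on the same two facts ($S_m=SQ_m$ and smoothness of $S$ off the diagonal).
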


\begin{proof}
Fix $x_0\in X_{k_0}$ and let $D$ be the canonical local patch given in Theorem \ref{j}. From (\ref{e3}), we have for any $D^\prime\Subset D$ and $N\in\mathbb N$, there exists a constant $C_{D^\prime , N}$ such that
\begin{equation}\label{e5}
|S_{k_0m}(x, x_0)|\geq \frac{k_0}{2\pi}\abs{\hat b(z, 0, k_0m)}e^{-k_0m{\rm Im}\Phi(z, 0)}-C_{D^\prime, N} m^{-N}, m>>1
\end{equation}
holds. Given $x=(z, \theta)$ with $|z|\leq \frac{1}{m}, |\theta|\leq \frac{1}{m}$, one has $|S_{k_0m}(x, x_0)|>0$ when $m\gg1$. Thus, there is a $\lambda_0>0$ such that for all $m\geq \lambda_0$ we have $|S_{k_0m}(x, x_0)|>0$ for all $x\in U_{m}(x_0)$, where $U_{m}(x_0)=\{(z, \theta): |z|<\frac{1}{m}, |\theta|<\frac{1}{m}\}$. Moreover, from the proof of Lemma~\ref{k1}, we see that there is a $\lambda_1>0$ such that for all $m\geq \lambda_1$, there is a small neighborhood $\Td U_m(x_0)$ of $x_0$ such that $\Phi^{k_0}_{m}: \Td U_m(x_0)\rightarrow \mathbb C^{d_{k_0m_0}+d_{k_0(m_0+1)}}$ is an embedding. Taking $m_0\geq \lambda_0+\lambda_1$ and setting  $U(x_0)=U_{m_0}(x_0)\bigcap U_{m_0+1}(x_0)\bigcap\Td U_{m_0}(x_0)$, we get (1).

Since $x_0\in X_{k_0}$,  we have $e^{i\frac{2\pi}{k_0}j}\circ x_0=x_0$ for $0\leq j\leq k_0, j\in \mathbb Z.$ Then for any $\varepsilon_0$ we define $I(x_0, \varepsilon_0)$ as in (\ref{e6}). When $\varepsilon_0$ is sufficiently small there exists a small neighborhood of $x_0$ denoted by $V(x_0)\Subset U(x_0)$ such that  $e^{i\theta}\circ V(x_0)\subset U(x_0)$ for $\theta\in I(x_0, \varepsilon_0)$. For $\theta\notin I(x_0, \varepsilon_0)$, $0\leq\theta<2\pi$, we have
$|k_0\theta-2\pi j|\geq\varepsilon_0k_0$ for every $j=0,1,\ldots,k_0$ which implies that there exists a constant $\delta_0$ depending on $\varepsilon_0$ such that $-1\leq\cos k_0\theta\leq 1-\delta_0$ for $\theta\not\in I(x_0, \varepsilon_0)$. Thus we get the conclusion of (2) in this proposition.

From the proof of (1), there is an $\Td m_1>0$ such that for every $m\geq\Td m_1$, there is a neighborhood $W_m(x_0)$ of $x_0$ such that $S_{k_0m}(x, x_0)\neq0$ and $S_{k_0(m+1)}(x, x_0)\neq0$. We assume that $m\geq\Td m_1$ and $x\in W_m(x_0)$.
By (\ref{e3}), we have
\begin{equation}\label{j5}
\begin{split}
&S_{k_0m}(x, x_0)\equiv \frac{k_0}{2\pi}e^{ik_0m(x_{2n-1}+\Phi(z, 0))}\hat b(z, 0, m)\mod O(m^{-\infty}),\\
&S_{k_0(m+1)}(x, x_0)\equiv \frac{k_0}{2\pi}e^{ik_0(m+1)(x_{2n-1}+\Phi(z, 0))}\hat b(z, 0, m+1)\mod O(m^{-\infty}),\\
&\hat b(z, 0, m)\sim \sum_{j=0}^\infty \hat b_{j}(z, 0)m^{n-1-j}~\text{in}~S^{n-1}_{\rm loc}(1; D).
\end{split}
\end{equation}
Write
\begin{equation*}
\frac{S_{k_0(m+1)}(x, x_0)}{S_{k_0m}(x, x_0)}=\mathcal R_{k_0m}(x)+i\mathcal I_{k_0m}(x).
\end{equation*}
Since $\hat b_0(0,0)\neq0$ (see Theorem~\ref{c2}), we have $\hat b(0, 0,m)\neq 0$ for $m$ large and this implies that $\hat b(z, 0,m)\neq 0$ when $|z|$ is sufficiently small. We assume that  $\hat b(z, 0,m)\neq 0$ for every $m\geq\Td m_1$ and every $(z,0)\in W_m(x_0)$. Set
\begin{equation*}
\begin{split}
a_m(x)=\frac{k_0}{2\pi}e^{ik_0m(x_{2n-1}+\Phi(z, 0))}\hat b(z, 0, m), b_m(x)=S_{k_0m}(x, x_0)-a_m(x).
\end{split}
\end{equation*}
From (\ref{j5}), for any $D^\prime\Subset V(x_0)\Subset D$ and any $N\in\mathbb N$ there exists a positive constant $C_{D^\prime, N}$ such that
\begin{equation*}
\sup_{x\in D^\prime}|S_{k_0m}(x)-a_m(x)|\leq C_{D^\prime, N}m^{-N},  m>>1,
\end{equation*}
holds. For any $m\geq\Td m_1$, define $V_m(x_0)=\{x=(z, \theta)\in D, |z|<\frac{1}{m}, |\theta|<\frac{1}{m}\}\bigcap W_m(x_0)$, then $V_m(x_0)\Subset D^\prime$ when $m$ is sufficiently large. Then on $V_m(x_0)$, we have
\begin{equation}\label{j8}
|b_{m+1}(x)|\leq C_{D^\prime, N}\frac{1}{(m+1)^N}, |b_m(x)|\leq C_{D^\prime, N}\frac{1}{m^N}.
\end{equation}
On the other hand, we have $|a_m(x)|=\frac{k_0}{2\pi}e^{-k_0m{\rm Im}\Phi(z, 0)}\hat b(z, 0, m)$. From (\ref{j6}), by a direct calculation we get
$
{\rm Im}\Phi(z, 0)=\lambda|z|^2+O(|z|^3).
$
So we assume $D^\prime$ to be sufficiently small such that on $D^\prime$ we have
\begin{equation*}
c_1|z|^2\leq {\rm Im}\Phi(z, 0)\leq c_2|z|^2
\end{equation*}
for some constants $c_1, c_2$. Then
\begin{equation}\label{j7}
|a_m(x)|\geq \hat c m^{n-1}, \forall x\in V_{m}(x_0),\ \ \frac{a_{m+1}(x)}{a_m(x)}\approx 1,\ \ \forall x\in V_m(x_0),
\end{equation}
holds for some positive constant $\hat c$ when $m$ is sufficiently large.
Since
\begin{equation*}
\frac{S_{k_0(m+1)}(x, x_0)}{S_{k_0m}(x, x_0)}=\frac{b_{m+1}+a_{m+1}}{b_m+a_m}=\frac{\frac{b_{m+1}}{a_m}+
\frac{a_{m+1}}{a_m}}{\frac{b_m}{a_m}+1},
\end{equation*}
(\ref{j8}) and (\ref{j7}) imply
\begin{equation*}
\frac{S_{k_0(m+1)}(x, x_0)}{S_{k_0m}(x, x_0)}\approx1, \forall x\in V_m(x_0)
\end{equation*}
for $m>>1$.
Then for any fixed $0<\sigma<\frac{\delta_0}{100}$, we can
choose $m_1$ sufficiently large such that $W(x_0)=\{(z, \theta): |z|<\frac{1}{m_1}, |\theta|<\frac{1}{m_1}\}$ satisfies $W(x_0)\Subset V(x_0)$ and on $W(x_0)$ we have
\begin{equation}
|1-\mathcal R_{k_0m_1}(x)|<\sigma, |\mathcal I_{k_0m_1}(x)|<\frac{\sigma}{8}.
\end{equation}
Thus, we get the conclusion of (3) in the proposition.

Choose a neighborhood $W_1(x_0)$ of $x_0$ such that $W_1(x_0)\Subset W(x_0)$ holds. Following the same arguments as in the proof of Lemma \ref{f5}, we have
\begin{equation}\label{j2}
S_{k_0m}(x_0, y)\equiv 0\mod O(m^{-\infty}), \forall y\not\in \bigcup\limits_{0\leq \theta<2\pi}e^{i\theta}\circ\overline{W_1(x_0)}.
\end{equation}
Since $X\setminus \bigcup\limits_{0\leq \theta<2\pi}e^{i\theta}\circ{W(x_0)}\Subset X\setminus \bigcup\limits_{0\leq \theta<2\pi}e^{i\theta}\circ\overline{W_1(x_0)}$, (\ref{j2}) implies that for any $N>0$ there exists a constant $C_N$ satisfying
\begin{equation*}
|S_{k_0 m}(x_0, y)|\leq C_N m^{-N}~\forall m>>1, \forall y\in X\setminus \bigcup\limits_{0\leq \theta<2\pi}e^{i\theta}\circ{W(x_0)} .
\end{equation*}
Thus, for any $c>0$, there exists $n_0$ such that for any $m>n_0$ we have $|S_{k_0 m}(x_0, y)|<\frac{c}{8}$ for all $y\not\in \bigcup\limits_{0\leq \theta<2\pi}e^{i\theta}\circ W(x_0)$. Then following the same arguments as in the proof of (1) in the proposition, there exists a positive integer $m_2$ and a neighborhood $\hat W(x_0)\Subset W_1(x_0)\Subset W(x_0)$ such that $|S_{k_0m_2}(x, x_0)|>\frac{c}{2}$ holds for all $x\in \hat W (x_0)$ and moreover $|S_{k_0 m_2}(x_0, y)|<\frac{c}{8}$ holds for all $y\not\in X\setminus \bigcup\limits_{0\leq \theta<2\pi}e^{i\theta}\circ{W(x_0)}$. Thus, we get the conclusion of (4) in this proposition.
\end{proof}
\subsection{Embedding of CR manifold by positive Fourier components}
Now, we are going to establish the global embedding of the CR manifolds, which have a locally free transversal CR $S^1$-action, by positive Fourier components.

Since $X\setminus X_{\rm reg}\Subset X$, there exist finite $\hat W(x_i)\Subset W(x_i)\Subset V(x_i)\Subset U(x_i)$ and positive constants $m_0(x_i), m_1(x_i), m_2(x_i)$ with respect to the points $x_i$, $ 0\leq i\leq n_0$ satisfying the properties in Proposition \ref{p1} and moreover $X\setminus X_{\rm reg}=\cup_{i=1}^{n_0}\hat W(x_i)$. Without loss of generality,  we assume that $x_i\in X_{k_i}, 0\leq i\leq n_0$. For every $i=0,1,\ldots,n_0$, set
\begin{equation*}
\begin{split}
&H_{x_i}=\bigoplus_{j=0}^2\Bigr(H^0_{b, k_im_j(x_i)}(X)\bigoplus H^0_{b, k_i(m_j(x_i)+1)}(X)\Bigr),\\
&H_m=\bigoplus_{k=1}^l\Bigr(H^0_{b, km}(X)\bigoplus H^0_{b, k(m+1)}(X)\Bigr)\bigoplus_{i=0}^{n_0}H_{x_i}.
\end{split}
\end{equation*}
Recall that $X=X_1\cup X_2\cup\cdots\cup X_l$, $X_{\rm reg}=X_1\neq\emptyset$. Put $N_m={\rm dim} H_m$ and let $\{f_j\}_{j=1}^{N_m}$ be an orthonormal basis of $H_m$ with respect to its decomposition. Define a map
\begin{equation*}
	\Phi_m: X\rightarrow\mathbb C^{N_m}, x\mapsto (f_1(x), \cdots, f_{N_m}(x)).
\end{equation*}
We will prove the following
\begin{theorem}\label{main1}
Let $X$ be a compact connected strongly pseudoconvex CR manifold with a locally free transversal CR $S^1$-action. Then $\Phi_m$ is an embedding when $m$ is large.
\end{theorem}
\begin{proof}
By Lemma \ref{k1}, we know that $\Phi_m$ is an immersion when $m$ is large. Now we show that $\Phi_m$ is injective when $m$ is large by seeking a contradiction. We assume that there exist two sequences $\{\hat y_m\}, \{\hat z_m\}\subset X$, $\hat y_m\neq\hat z_m$ such that $\Phi_m(\hat y_m)=\Phi_m(\hat z_m)$. Since $X$ is compact, there exist  subsequences of $\{\hat y_m\}, \{\hat z_m\}$ which are also denoted by $\{\hat y_m\}, \{\hat z_m\}$ such that $\hat y_m\rightarrow\hat y$, $\hat z_m\rightarrow\hat z$ for $m\rightarrow\infty$.

First we assume that $\hat y, \hat z\in X\setminus X_{\rm reg}.$

Case I: $\hat y=e^{i\theta_0}\circ\hat z, \hat z\in X_k$ for some $k>1$ and $\hat z\in U(x_i)$ for some $i$. By assumption of $\hat y_m, \hat z_m$ we have that
\begin{equation}\label{k2}
\begin{split}
S_{k_im_0(x_i)}(\hat y, x_i)&=S_{k_im_0(x_i)}(\hat z, x_i),\\
S_{k_i(m_0(x_i)+1)}(\hat y, x_i)&=S_{k_i(m_0(x_i)+1)}(\hat z, x_i).
\end{split}
\end{equation}
In the following context, we will omit $x_i$ in $m_j(x_i), j=0, 1, 2$ for brevity if it makes no confusion. Then
(\ref{k2}) implies
\begin{equation*}
\begin{split}
e^{ik_im_0\theta_0}S_{k_im_0}(\hat z, x_i)&=S_{k_im_0}(\hat z, x_i),\\
e^{ik_0(m_0+1)\theta_0}S_{k_i(m_0+1)}(\hat z, x_i)&=S_{k_i(m_0+1)}(\hat z, x_i).
\end{split}
\end{equation*}
By (1) in Proposition \ref{p1}, we have $e^{ik_i\theta_0}=1.$ Then $\theta_0=\frac{2\pi}{k_i}m$ holds for some $m\in\mathbb Z.$
The rigid Hermitian metric on $X$ implies that $e^{i\theta}: X\rightarrow X$ is an isometric map for each $\theta$. Thus, we have
\begin{equation}\label{d4}
{\rm dist}(\hat y, x_i)={\rm dist} (e^{i\frac{2\pi}{k_i}m}\circ\hat z, x_i)={\rm dist}(e^{i\frac{2\pi}{k_i}m}\circ\hat z, e^{i\frac{2\pi}{k_i}m}\circ x_i)={\rm dist}(\hat z, x_i).
\end{equation}
This implies $\hat y\in U(x_i)$ if the $U(x_i)$ we chose is a geodesic ball centered at $x_i$. This is a contradiction since $\Phi_m$ is an embedding on $U(x_i)$.

case II: $\hat y\neq e^{i\theta}\circ\hat z, \forall 0< \theta<2\pi$. We assume $\hat z\in \hat W(x_i)$. Since $\Phi_m$ is an embedding on $U(x_i)$, we must have $\hat y\not\in U(x_i).$
Now we have a claim as follows

Claim: $\hat y\not\in\bigcup\limits_{0\leq\theta<2\pi} e^{i\theta}\circ W(x_i)$.

We prove the claim by seeking a contradiction. If it is not true, there exists a $\hat z_1\in W(x_i)$ such that $\hat y=e^{i\hat \theta}\circ\hat z_1$ holds for some $\hat \theta\in [0, 2\pi)$. By (2) in Proposition \ref{p1} we have $\hat \theta\not\in I(x_i, \varepsilon_i)$ and $-1\leq\cos k_i\hat\theta\leq 1-\delta_i$. Since
\begin{equation*}
\begin{split}
S_{k_im_1}(\hat y, x_i)=S_{k_im_1}(\hat z, x_i),~
S_{k_i(m_1+1)}(\hat y, x_i)=S_{k_i(m_1+1)}(\hat z, x_i),
\end{split}
\end{equation*}
we find
\begin{equation}\label{d5}
\frac{S_{k_i(m_1+1)}(\hat z, x_i)}{S_{k_im_1}(\hat z, x_i)}=e^{ik_i\hat\theta}\frac{S_{k_i(m_1+1)}(\hat z_1, x_i)}{S_{k_im_1}(\hat z_1, x_i)}.
\end{equation}
From (\ref{d5}) we have
\begin{equation*}
\mathcal R_{k_im_1}(\hat z)+i\mathcal I_{k_im_1}(\hat z)=(\cos k_i\hat \theta+i\sin k_i\hat\theta)(\mathcal R_{k_im_1}(\hat z_1)+i\mathcal I_{k_im_1}(\hat z_1))
\end{equation*}
which leads to
\begin{equation*}
\mathcal R_{k_im_1}(\hat z)
=\mathcal R _{k_im_1}(\hat z_1)\cos
k_i\hat\theta-\mathcal I_{k_im_1}(\hat z_1)\sin
k_i\hat\theta.
\end{equation*}

Then we get
\begin{equation}\label{d6}
1-\mathcal R_{k_im_1}(\hat z)=1+(1-\mathcal
R_{k_im_1}(\hat z_1))\cos k_i\hat\theta-\cos k_i\hat\theta+\mathcal
I_{k_im_1}(\hat z_1)\sin k_i\hat\theta.
\end{equation}
From (\ref{d6}) we have
\begin{equation*}
|1-\mathcal R_{k_im_1}(\hat z)|\geq 1-\cos k_i\hat\theta-|1-\mathcal R_{k_im_1}(\hat z_1)|-|\mathcal I_{k_im_1}(\hat z_1)|.
\end{equation*}
By (3) in Proposition \ref{p1} we have
\begin{equation*}
\sigma\geq|1-\mathcal R_{k_im_1}(\hat z)|\geq 1-(1-\delta_0)-\sigma-\frac{\sigma}{8},
\end{equation*}
that is
\begin{equation*}
(2+\frac18)\sigma\geq\delta_0.
\end{equation*}
This is contradiction with $0<\sigma<\frac{\delta_0}{100}$. Thus we get the conclusion of the claim.

From the above claim and  by (4) in Proposition \ref{p1}, we have
\begin{equation*}
|S_{k_im_2}(\hat z, x_i)|>\frac{c}{2}, |S_{k_im_2}(\hat y, x_i)|<\frac{c}{8}.
\end{equation*}
This is a contradiction with
\begin{equation*}
S_{k_im_2}(\hat z, x_i)=S_{k_im_2}(\hat y, x_i).
\end{equation*}

Next, we assume $\hat y, \hat z\in X_{\rm reg}$.

Case III: $\hat y, \hat z\in X_{\rm reg}$ and $\hat y=e^{i\hat \theta}\circ\hat z$ for some $\hat\theta\in [0, 2\pi)$. Choose canonical  coordinates $(z, \theta, \varphi)$ centered at some $\hat z_0\in X$ and defined on a canonical local patch $D=\{(z, \theta): |z|<\varepsilon, |\theta|<\pi\}$ such that in terms of the canonical coordinates, $\hat z=(0, \theta_1)$, $\hat y=(0, \theta_2).$ Then we have $\theta_2-\theta_1=\hat\theta$. Let $\{f_j\}_{j=1}^{d_m}$ and $\{g_j\}_{j=1}^{d_{m+1}}$ be orthonormal bases of $H^0_{b, m}(X)$ and $H^0_{b, m+1}(X)$, respectively. Then by the assumptions on $\hat y_m$ and  $\hat z_m$ we have
\begin{equation*}
\begin{split}
S_m(\hat z_m, \hat y_m)=S_m(\hat z_m, \hat z_m),
S_{m+1}(\hat z_m, \hat y_m)=S_{m+1}(\hat z_m, \hat z_m).
\end{split}
\end{equation*}
Without loss of generality, we assume $\hat z_m, \hat y_m\in D$ for each $m$. Then in terms of canonical local coordinates, we write $\hat z_m=(z_m, \theta_m)$ and $\hat y_m=(w_m, \eta_m).$ By Theorem \ref{c2} we have \begin{equation}\label{d7}
\begin{split}
S_m(\hat z_m, \hat y_m)&=\frac{1}{2\pi}e^{im(\theta_m-\eta_m+\Phi(z_m, w_m))}\hat b(z_m, w_m, m)+O(m^{-\infty}),\\
S_{m+1}(\hat z_m, \hat y_m)&=\frac{1}{2\pi}e^{i(m+1)(\theta_m-\eta_m+\Phi(z_m, w_m))}\hat b(z_m, w_m, m+1)+O(m^{-\infty}),\\
S_m(\hat z_m, \hat z_m)&=\frac{1}{2\pi}\hat b(z_m, z_m, m)+O(m^{-\infty}),\\
S_{m+1}(\hat z_m, \hat z_m)&=\frac{1}{2\pi}\hat b(z_m, z_m, m+1)+O((m+1)^{-\infty}).
\end{split}
\end{equation}

We assume $\lim\limits_{m\rightarrow\infty}m{\rm Im}\Phi(z_m, w_m)=M$ ($M$ can be $\infty$).

(a): Assume
\begin{equation*}
\lim\limits_{m\rightarrow\infty}m{\rm Im}\Phi(z_m, w_m)=M\in (0, \infty].
\end{equation*}
From $S_m(\hat z_m, \hat y_m)=S_m(\hat z_m, \hat z_m)$ and (\ref{d7}) we have
\begin{equation*}
e^{im(\theta_m-\eta_m+\Phi(z_m, w_m))}\hat b(z_m, w_m, m)=\hat b(z_m, z_m, m)+O(m^{-\infty}).
\end{equation*}
Then we get
\begin{equation*}
m^{-(n-1)}|\hat b(z_m, w_m, m)|e^{-m{\rm Im}\Phi(z_m, w_m)}=m^{-(n-1)}|\hat b(z_m, z_m, m)+O(m^{-\infty})|.
\end{equation*}
Letting $m\rightarrow\infty$, we have
\begin{equation*}
\hat b(0, 0)= e^{-M}\hat b(0, 0),
\end{equation*}
that is $\hat b(0, 0)=0$. Thus, we get a contradiction.

(b): Assume
\begin{equation}\label{d8}
\lim\limits_{m\rightarrow\infty}m{\rm Im}\Phi(z_m, w_m)=0.
\end{equation}
From $S_{m+1}(\hat z_m, \hat y_m)-S_m(\hat z_m, \hat y_m)=S_{m+1}(\hat z_m, \hat z_m)-S_{m}(\hat z_m, \hat z_m)$ combined with (\ref{d7}) we have
\begin{equation*}
\begin{split}
&m^{-(n-1)}\abs{e^{im(\theta_m-\eta_m+\Phi(z_m, w_m))}\left[e^{i(\theta_m-\eta_m+\Phi(z_m, w_m))}\hat b(z_m, w_m, m+1)-\hat b(z_m, w_m, m)\right]}\\
=&m^{-(n-1)}\abs{\hat b(z_m, z_m, m+1)-\hat b(z_m, z_m, m)}+O(m^{-\infty}).
\end{split}
\end{equation*}
Letting $m\rightarrow\infty$ and using (\ref{d8}), we have
\begin{equation*}
|e^{i\hat\theta}\hat b(0, 0)-\hat b(0, 0)|=0.
\end{equation*}
Hence, $\hat\theta=0$ and $\hat z=\hat y.$ Put
\begin{equation*}
f_m(t)=\frac{|S_m(t\hat z_m+(1-t)\hat y_m, \hat y_m)|^2}{S_m(t\hat z_m+(1-t)\hat y_m,t\hat z_m+(1-t)\hat y_m)S_m(\hat y_m,\hat y_m)}.
\end{equation*}
We have
\begin{equation}\label{d9}
\begin{split}
&f_m(0)=\frac{S_m(\hat y_m,\hat y_m)^2}{S_m(\hat y_m,\hat y_m)^2}=1,\\
&f_m(1)=\frac{\abs{S_m(\hat z_m, \hat y_m)}^2}{S_m(\hat z_m,\hat z_m)S_m(\hat y_m,\hat y_m)}=\frac{S_m(\hat y_m, \hat y_m)^2}{S_m(\hat y_m,\hat y_m)S_m(\hat y_m,\hat y_m)}=1.
\end{split}
\end{equation}
By the Schwartz inequality, we get $0\leq f_m(t)\leq 1.$ Then (\ref{d9}) implies that there is a $t_m\in (0, 1)$ such that
$
f'_m(t_m)=0$ and $f_m^{''}(t_m)\geq 0
$
holds. Hence, we get
\begin{equation}\label{e1}
\liminf_{m\rightarrow\infty} \frac{f_m^{''}(t_m)}{|z_m-w_m|^2m}\geq0.
\end{equation}
Then, making use of the same arguments as in \cite{H14a}((4.22) in Theorem 4.7), we have that (\ref{e1}) is impossible under the assumption (\ref{d8}).

Case IV: $\hat z, \hat y\in X_{\rm reg}$, $\hat y\neq e^{i\theta}\circ\hat z$ for any $\theta\in[0, 2\pi).$ Choose a canonical local patch $D(\hat z)$  with canonical coordinates $(z, \theta, \varphi)$ centered at $\hat z$. Since $\hat z\in X_{\rm reg}$, we can apply Lemma \ref{l-gue150615} and have that $D(\hat z)$ can be chosen such that $D(\hat z)=\{(z, \theta): |z|<\varepsilon, |\theta|<\pi\}$ holds in terms of canonical coordinates. Let $\varepsilon$ be sufficiently small such that $\overline{ D(\hat z)}$
 is $S^1$-invariant.
Since $\hat y\neq e^{i\theta }\circ \hat z$ for all $\theta\in [0, 2\pi)$, for $\varepsilon $ small enough we can choose a canonical local patch $D(\hat y)$ such that $\overline{D(\hat y)}\cap \overline{D(\hat z)}=\emptyset$ holds. Choose two functions $\chi, \chi_1\in C_0^{\infty}(X)$ satisfying $\chi=1$ in a small neighborhood of $\overline{D(\hat z)}$ and ${\chi_1}=1$ in a small neighborhood of ${\rm supp}\chi$ and ${\rm supp}\chi\cap \overline{D(\hat y)}={\rm  supp}\chi_1\cap \overline{D(\hat y)}=\emptyset.$ Choose $\chi_0(w)\in C_0^\infty(\mathbb C^{n-1})$ such that ${\rm supp}\chi_0(w)\Subset \{w: |w|<1\}$ and $\int_{\mathbb C^{n-1}}\chi_0(w)dv(w)=1$ hold. Furthermore, choose $\eta_0(y_{2n-1})\in C_0^\infty(-\pi, \pi)$ with $\int_{-\pi}^\pi\eta_0(y_{2n-1})dy_{2n-1}=1$. For any $m\in\mathbb N$, set
\begin{equation}
u_m(y)=m^{n-1}e^{im (y_{2n-1}-\theta_m-{\rm Re}\Phi(z_m, w))}\eta_0(y_{2n-1})\chi_0(m(w-z_m))\in C_0^\infty(D(\hat z)).
\end{equation}

Then we have \begin{equation}\label{f6}
S_mu_m(\hat y_m)=\chi S_mu_m(\hat y_m)+(1-\chi)S_mu_m(\hat y_m)=(1-\chi)S_mu_m(\hat y_m)
\end{equation}
and
\begin{equation}\label{f7}
(1-\chi) S_mu_m(\hat y_m)=(1-\chi)S\chi_1Q_mu_m(\hat y_m)+(1-\chi)S(1-\chi_1)Q_mu_m(\hat y_m).
\end{equation}
Since $\overline{D(\hat z)}$ is an $S^1$-invariant subset and ${\rm supp} u_m\Subset D(\hat z)$, we have ${\rm supp} Q_mu_m\Subset \overline{D(\hat z)}$. This implies
\begin{equation}\label{f8}
(1-\chi)S(1-\chi_1)Q_mu_m(\hat y_m)=0.
\end{equation}
Then by the same arguments as in the proof of Lemma \ref{f5}, we get
\begin{equation}\label{f9}
(1-\chi)S\chi_1Q_mu_m(\hat y_m)=O(m^{-\infty}).
\end{equation}
Combining (\ref{f6}), (\ref{f7}), (\ref{f8}) and (\ref{f9}), we conclude
\begin{equation*}
S_mu_m(\hat y_m)=O(m^{-\infty}).
\end{equation*}
On the other hand, we have
\begin{equation*}
\begin{split}
S_mu_m(\hat z_m)
=&\frac{m^{n-1}}{2\pi}\int_X e^{-m{\rm Im}\Phi(z_m, w)}\hat b(z_m, w, m)\chi_0(m(w-z_m))\lambda(w)dv(w)+O(m^{-\infty})\\
=&\frac{1}{2\pi}\int_{\{w\in\mathbb C^{n-1}: |w|<1\}}e^{-m{\rm Im}\Phi(z_m, \frac{w}{m}+z_m)}\hat b(z_m, \frac{w}{m}+z_m, m)\times\\
&\chi_0(w)\lambda(\frac{w}{m}+z_m)m^{-(n-1)}dv(w)
+O(m^{-\infty}).
\end{split}
\end{equation*}
Since ${\rm Im}\Phi(z_m, \frac{w}{m}+z_m)\geq c_0|\frac{w}{m}|^2$ for some constant $c_0$, we have $-m{\rm Im}\Phi(z_m, \frac{w}{m}+z_m)\rightarrow0$  uniformly on $\{w\in\mathbb C^{n-1}: |w|<1\}$ as $m\rightarrow\infty$.
Letting $m\rightarrow\infty$ we have
\begin{equation*}
\lim\limits_{m\rightarrow\infty}S_mu_m(\hat z_m)=\frac{1}{2\pi}\hat b(0, 0)\neq 0.
\end{equation*}
This is a contradiction with the assumption  $S_mu_m(\hat z_m)=S_mu_m(\hat y_m)$.

Case V: $\hat z\in X_{\rm reg}, \hat y\not\in X_{\rm reg}.$ We have that $\hat y\neq e^{i\theta}\circ \hat z$ for all $\theta\in[0, 2\pi).$ Following the same arguments as in Case IV, we find that this is impossible.

Thus, we get the conclusion of Theorem \ref{main1}.
\end{proof}

\begin{center}
{\bf Acknowledgement}
\end{center}

The authors would like to thank Professor Paul Yang for his interest in this work. The third-named author would like to thank the Institute of Mathematics, University of Cologne for the hospitality during his visit. We also thank the referee for many detailed remarks that have helped to improve the presentation.

\bibliographystyle{amsalpha}

\end{document}